\newcommand{\draft}{}
\newcommand{\markerO}{\fbox{\rule{0pt}{0.1ex}\textbf{\textsf{Olaf}}}}
\newcommand{\markerF}{\fbox{\rule{0pt}{0.1ex}\textbf{Fernando}}}
\newcommand{\markerJ}{\fbox{\rule{0pt}{0.1ex}\textbf{John}}}
\newcommand{\look}[1]{\markerO \textbf{*}
    \footnote{ #1 }}
\newcommand{\lookO}[1]{\markerO\textbf{*}
    \footnote{\textbf{\textsf{Olaf:}} #1 }}
\newcommand{\lookF}[1]{\markerF\textbf{*}
    \footnote{\textbf{Fernando:} #1 }}
\newcommand{\lookJ}[1]{\markerJ\textbf{*}
	\footnote{\textbf{John:} #1 }}
\newcommand{\Ignore}[1]{}
  \renewcommand{\look}[1]{}
  \renewcommand{\lookO}[1]{}%
  \renewcommand{\lookF}[1]{}%
  \renewcommand{\lookJ}[1]{}%
\renewcommand\mathcal\mathscr  
\numberwithin{equation}{section}
\newcounter{myenumi}
\newcommand{\itemref}[1]{\noindent(\ref{#1})}
  \theoremstyle{plain} 
  \newtheorem{theorem}{Theorem}[section]
  \newtheorem{maintheorem}[theorem]{Main Theorem}
  \newtheorem{proposition}[theorem]{Proposition}
  \newtheorem{lemma}[theorem]{Lemma}
  \newtheorem{corollary}[theorem]{Corollary}
  \newtheorem{conjecture}[theorem]{Conjecture}
  \theoremstyle{definition}       
  \newtheorem{definition}[theorem]{Definition}
  \newtheorem{assumption}[theorem]{Assumption}
  \newtheorem{example}[theorem]{Example}
  \newtheorem{remark}[theorem]{Remark}
  \newtheorem*{remark*}{Remark}
  \newtheorem{notation}[theorem]{Notation}
  \newcommand{\myparagraph}[1]{\textbf{#1}}
  \newcommand{\myfont}{\sffamily}
  \newcommand{\myparagraph}[1]{\noindent\textbf{\myfont{#1}}}
  \newtheoremstyle{mythmstyle}
  {\topsep}
  {\topsep}
  {\itshape}
  {}
  {\bfseries \myfont}
  {.}
  {.5em}
  {}
  \newtheoremstyle{mydefstyle}
  {\topsep}
  {\topsep}
  {\normalfont}
  {}
  {\bfseries \myfont}
  {.}
  {.5em}
  {}
  \theoremstyle{mythmstyle}       
  \newtheorem{theorem}{Theorem}[section]
  \newtheorem{proposition}[theorem]{Proposition}
  \newtheorem{lemma}[theorem]{Lemma}
  \newtheorem{corollary}[theorem]{Corollary}
  \newcounter{intro}
  \theoremstyle{mydefstyle}        
  \newtheorem{definition}[theorem]{Definition}
  \newtheorem{example}[theorem]{Example}
  \newtheorem{remark}[theorem]{Remark}
  \newtheorem*{remark*}{Remark}
\let\expandafter\oldproof\csname\string\proof\endcsname
  \let\oldendproof\endproof
  \renewenvironment{proof}[1][\bfseries\myfont\proofname]{%
    \oldproof[\bfseries \myfont #1]%
  }{\oldendproof}
\newcommand{\Sec}[1]{Section~\ref{sec:#1}}
\newcommand{\Subsec}[1]{Subsection~\ref{subsec:#1}}
\newcommand{\Eq}[1]{Eq.~\eqref{#1}}
\newcommand{\Fig}[1]{Figure~\ref{fig:#1}}
\newcommand{\Thm}[1]{Theorem~\ref{thm:#1}}
\newcommand{\Thms}[2]{Theorems~\ref{thm:#1} and~\ref{thm:#2}}
\newcommand{\Thmenum}[2]{Theorem~\ref{thm:#1}~(\ref{#2})}
\newcommand{\Ex}[1]{Example~\ref{ex:#1}}
\newcommand{\Exenum}[2]{Example~\ref{ex:#1}~(\ref{#2})}
\newcommand{\Lem}[1]{Lemma~\ref{lem:#1}}
\newcommand{\Lemenum}[2]{Lemma~\ref{lem:#1}~(\ref{#2})}
\newcommand{\Lemenums}[3]{Lemma~\ref{lem:#1}~(\ref{#2}) and~(\ref{#3})}
\newcommand{\Cor}[1]{Corollary~\ref{cor:#1}}
\newcommand{\Cors}[2]{Corollaries~\ref{cor:#1} and~\ref{cor:#2}}
\newcommand{\Corenum}[2]{Corollary~\ref{cor:#1}~(\ref{#2})}
\newcommand{\Prp}[1]{Proposition~\ref{prp:#1}}
\newcommand{\Prpenum}[2]{Proposition~\ref{prp:#1}~(\ref{#2})}
\newcommand{\Rem}[1]{Remark~\ref{rem:#1}}
\newcommand{\Remenum}[2]{Remark~\ref{rem:#1}~(\ref{#2})}
\newcommand{\Def}[1]{Definition~\ref{def:#1}}
\newcommand{\Defs}[2]{Definitions~\ref{def:#1} and~\ref{def:#2}}
\newcommand{\Defenum}[2]{Definition~\ref{def:#1}~(\ref{#2})}
\newcommand{\DefenumS}[3]{Definition~\ref{def:#1}~(\ref{#2})--(\ref{#3})}
\newcommand{\abs}[2][{}]{\lvert{#2}\rvert_{{#1}}}    
\newcommand{\abssqr}[2][{}]{\lvert{#2}\rvert^2_{#1}} 
\newcommand{\bigabs}[2][{}]{\bigl\lvert{#2}\bigr\rvert_{#1}}     
\newcommand{\normsymb}{\|}
\newcommand{\norm}[2][{}]{\normsymb{#2}\normsymb_{{#1}}}    
\newcommand{\normsqr}[2][{}]{\normsymb{#2}\normsymb^2_{#1}} 
\newcommand{\iprod}[3][{}]{\langle{#2},{#3}\rangle_{#1}}  
\newcommand{\set}[2]{\{ \, #1 \, | \, #2 \, \} }      
\newcommand{\bigset}[2]{\bigl\{ \, #1 \, \bigl|\bigr. \, #2 \, \bigr\} }
\newcommand{\Bigset}[2]{\Bigl\{ \, #1 \, \Bigl|\Bigr. \, #2 \, \Bigr\} }
\newcommand{\map}[3]{ #1 \colon #2 \longrightarrow #3}    
\newcommand{\compl}[1]{#1^{\mathsf c}}
\newcommand{\bd}  {\partial}          
\newcommand{\restr}[1]{{\restriction}_{#1}} 
\newcommand{\card}[1]{\lvert#1\rvert}   
\DeclareMathOperator{\id}     {id}   
\DeclareMathOperator{\tr}     {tr}  
\newcommand{\de} {\mathord{\mathrm d}} 
\renewcommand{\phi}{\varphi}   
\newcommand{\R}{\mathbb{R}} 
\newcommand{\C}{\mathbb{C}} 
\newcommand{\N}{\mathbb{N}} 
\newcommand{\Z}{\mathbb{Z}} 
\newcommand{\1}{\mathbbm 1}                    
\newcommand{\e}{\mathrm e}  
\newcommand{\im}{\mathrm i} 
\newcommand{\G}{{G}}
\newcommand{\W}{\mathbf{G}}
\newcommand{\Wfull}{(\G,\alpha,\m)} 
\newcommand{\Wtwofull}{(\G',\alpha',\m')} 
\newcommand{\Wfulldeg}{(\G,\alpha,\deg)}
\newcommand{\Wfullcomb}{(\G,\alpha,\com)}
\newcommand{\m}{w}
\newcommand{\wt}{\widetilde}           
\newcommand{\HS}{\mathcal H}           
\newcommand{\lsymb}    {\ell}          
\newcommand{\lpspace}[1][p]    {\lsymb_{#1}}     
\newcommand{\lsqrspace}    {\lpspace[2]}          
\newcommand{\lp}[2][p]{\lpspace [#1]({#2})} 
\newcommand{\lsqr}[2][{}]{\lsqrspace^{#1}({#2})}   
\newcommand{\laplacian}[2][{}]{\Delta_{{#2}}^{{#1}}} 
\newcommand{\quadtext}[1]{\quad\text{#1}\quad}
\newcommand{\qquadtext}[1]{\qquad\text{#1}\qquad}
\DeclareMathAlphabet{\Ma}{U}{msa}{m}{n}
\DeclareMathAlphabet{\Mb}{U}{msb}{m}{n}
\DeclareMathAlphabet{\Meuf}{U}{euf}{m}{n}
\DeclareSymbolFont{ASMa}{U}{msa}{m}{n}
\DeclareSymbolFont{ASMb}{U}{msb}{m}{n}
\newcommand{\lessWithNumber}[1]{\stackrel{#1}\preccurlyeq}
\NewDocumentCommand{\less}{o}{%
  \IfNoValueTF{#1}
    {\preccurlyeq}
    {\lessWithNumber{#1}}%
}
\newcommand{\com}{\mathbbm{1}}
\newcommand{\Ga}{\mathscr{G}}
\newcommand{\Co}{\mathscr{G}_{\com}}
\newcommand{\De}{\mathscr{G}_{\deg}}
\newcommand{\lesse}{\sqsubseteq}
\newcommand{\GAM}{MW}    
\newcommand{\aGAM}{an MW}   
\newcommand{\AGAM}{An MW}   
\newcommand{\GM}{M}  
\newcommand{\aGM}{an M}
\newcommand{\RmodZ}{\R/2\pi\Z}
\newcommand{\vx}[2][]{#2 \restr{V#1}}
\newcommand{\ed}[2][]{#2 \restr{E#1}}
\title{Spectral preorder and perturbations of discrete weighted graphs}%
\author{John Stewart Fabila-Carrasco} %
\address{Department of Mathematics, University Carlos III de Madrid,
  Avda. de la Universidad 30, 28911. Legan\'es (Madrid), Spain}
\email{jfabila@math.uc3m.es}
\author{Fernando Lled\'o} %
\address{Department of Mathematics, University Carlos III de Madrid,
  Avda. de la Universidad 30, 28911. Legan\'es (Madrid), Spain and
  Instituto de Ciencias Matem\'aticas (CSIC-UAM-UC3M-UCM), Madrid}
\email{flledo@math.uc3m.es}
\author{Olaf Post} %
\address{Fachbereich 4 -- Mathematik, Universit\"at Trier, 54286
  Trier, Germany} \email{olaf.post@uni-trier.de}
\thanks{JSFC was supported by Spanish Ministry of Economy and
Competitiveness through project DGI MTM2017-84098-P}
\thanks{FLl was supported by Spanish Ministry of Economy and
Competitiveness through project DGI MTM2017-84098-P and the
\emph{Severo Ochoa} Program for Centers of Excellence in R\&D
(SEV-2015-0554).}
\keywords{preorder on graphs, spectral graph theory, discrete magnetic Laplacian, Cheeger constant, frustration index, covering graphs} 
\subjclass[2010]{05C50, 47B39, 47A10, 05C76}
\begin{document}


\ifthenelse{\isundefined \draft}
{\date{\today}}  
{\date{\today, \thistime,  \emph{File:} \texttt{\jobname.tex}}} 

\begin{abstract}
  In this article, we introduce a geometric and a spectral preorder
  relation on the class of weighted graphs with a magnetic
  potential. The first preorder is expressed through the existence of
  a graph homomorphism respecting the magnetic potential and fulfilling
  certain inequalities for the weights.  The second preorder refers to
  the spectrum of the associated Laplacian of the magnetic weighted
  graph. These relations give a quantitative control of the effect of
  elementary and composite perturbations of the graph (deleting edges,
  contracting vertices, etc.) on the spectrum of the corresponding
  Laplacians, generalising interlacing of eigenvalues.

  We give several applications of the preorders: we show how to
  classify graphs according to these preorders and we prove the
  stability of certain eigenvalues in graphs with a maximal
  $d$-clique.  Moreover, we show the monotonicity of the eigenvalues
  when passing to spanning subgraphs and the monotonicity of magnetic
  Cheeger constants with respect to the geometric preorder.  Finally,
  we prove a refined procedure to detect spectral gaps in the spectrum
  of an infinite covering graph.
\end{abstract}

\maketitle

\begin{center}
    \emph{To Hagen Neidhardt \emph{in memoriam}.}
  \end{center}


%
\section{Introduction}
\label{sec:intro}
%

Analysis on graphs is an active area of research that combines several
fields in mathematics including combinatorics, analysis, geometry or
topology.  Problems in this field range from discrete version of
results in differential geometry to the study of several combinatorial
aspects of the graph in terms of spectral properties of operators on
graphs (typically discrete versions of continuous Laplacians), see
e.g.~\cite{mohar:91,cds:95,chung:97,colin:98,hogben:05,sunada.in:08,sunada:12,brouwer-haemers:12}.  
The interplay between discrete and continuous structures are very apparent for the class of metric graphs
together with their natural Laplacians (see e.g.~\cite{ekkst:08,post:12} and references therein).

The spectrum of a finite graph mostly refers to the spectrum of the
adjacency matrix $A$ (e.g.\ in Cvetkovi\'c, Doob and Sachs
book~\cite{cds:95} or in Brouwer and Haemers'
book~\cite{brouwer-haemers:12}, while the latter book also contains many
results on the Laplacian $L=D-A$ and its signless version
$Q=D+A$.  Here, $D$ is the matrix with the degrees of the (numbered)
vertices on its diagonal.  In Chung's book~\cite[Section~1.2]{chung:97}
the spectrum of a graph refers to the spectrum of its \emph{standard}
Laplacian $\mathcal L=D^{-1/2}LD^{-1/2}=I-D^{-1/2}AD^{-1/2}$, where
$I$ is the identity matrix of order $\card G$ (the standard Laplacian
is sometimes also called \emph{normalised}, e.g.\ in \cite{chung:97},
or sometimes also \emph{geometric}).  Colin de
Verdi\`ere~\cite{colin:98} considers wider classes of discrete
operators, namely discrete weighted Laplacians with electric (but
without magnetic) potential.  A survey considering all the
above-mentioned matrices associated with a graph can be found
in~\cite{hogben:05}.  Note that the spectra of the combinatorial,
standard Laplacian and the adjacency operator are only related if the
underlying graph is regular (i.e.\ all vertices have the same degree).

In this article, we consider general weights on the edges and
vertices, in order to include the combinatorial and standard
Laplacians at the same time.  Moreover, we allow magnetic potentials,
which can be considered also as complex-valued edge weights (of
absolute value $1$).  Magnetic Laplacians or Schr\"odinger operators
on graphs have also attracted much interest (see,
e.g.~\cite{sunada:94c,
  higuchi-shirai:01,llpp:15,korotyaev-saburova:17,bgklm:20}); they are
defined via a phase $\e^{\im \alpha_e}$ for each oriented edge $e$ in
the discrete Laplacian; $\alpha_e$ is called the \emph{magnetic
  potential}. The concept of \emph{balanced} or \emph{signed} graphs
is related (as pointed out only recently in~\cite{llpp:15}, see also
the detailed reference list therein), and it can be seen as a special
case of a magnetic Laplacian with magnetic phases $1=\e^{0}$ and
$-1=\e^{\im \pi}$ only.  A prominent example of a magnetic Laplacian
already treated in some spectral graph theory articles or books
(e.g.~\cite{brouwer-haemers:12}) is the \emph{signless}
(combinatorial) Laplacian $Q=D+A$ mentioned above; it can be seen as a
magnetic combinatorial Laplacian with phase $-1=\e^{\im \pi}$ (i.e.\
vector potential $\alpha_e=\pi$ on all edges).

We will base our analysis in a rather general setting.  In particular,
we allow \emph{multigraphs} $G$ (i.e., graphs with multiple edges and
loops) which we simply call \emph{graphs} here.  Moreover, we allow
arbitrary weights on vertices and edges (denoted by the same symbol
$\m$) in order to cover the combinatorial and the standard Laplacian
(and all other weighted versions).  Finally, we allow a discrete
vector potential $\alpha$ describing a magnetic flux on each cycle of
the graph; in particular, our analysis allows to include also signed
graphs or signless versions of the Laplacian.  We call such graphs
\emph{magnetic weighted graphs} (or \GAM-graphs for short) and denote
the class by $\Ga$. The graphs in this class may have finite or
infinite order. A generic element in this class is written as
$\W=(\G,w,\alpha)$. If we restrict to \GAM-graphs with combinatorial
or standard weights, we use the symbols $\Co$ and $\De$, respectively.

In this article, we present two preorders on the class of \GAM-graphs:
the first one denoted by $\W \lesse \W'$ is geometric in nature and
basically assumes that there is a graph homomorphism from $\W$ to
$\W'$ respecting the magnetic potential and fulfilling certain
inequalities on the weights, called \emph{magnetic graph
  homomorphisms} (\GAM-homomorphisms for short, see
\Def{hom.gam-graph} for details).  The existence of an
\GAM-homomorphism is rather restrictive, e.g.\ for standard weights
(degree on the vertices, and $1$ on the edges), an $\GAM$-homomorphism
is a quotient map (cf.\ \Prp{gam-homo.std}).

The inequalities on the weights are made in such a way that
\begin{equation*}
  \W\lesse\W' \quad\Rightarrow \quad \lambda_k(\W)\leq \lambda_k(\W')
\end{equation*}
hold for all $k$ (assuming that the number of vertices fulfils
$\card{V(\W)} \ge \card{V(\W')}$).  Here we write the spectrum of the
magnetic weighted Laplacian in increasing order and counting
multiplicities.  This monotonicity is our first main result, see
\Thm{homomorphism}.  In particular, the inequalities on the weights
imply a similar inequality on the Rayleigh quotients.  We state the
above eigenvalue inequality as $\W \less \W'$, our second preorder on
the set of (finite) \GAM-graphs $\Ga$.

Similarly, the weight inequalities characterising MW-homomorphisms 
are compatible with a certain isoperimetric ratio. 
In fact, given $\W\in\Ga$ denote by $h_k(\W)$ the
$k$-th (magnetic weighted) Cheeger constant where we incorporate into
the analysis the magnetic field via the frustration index of the graph
(see \Subsec{cheeger} and~\cite{llpp:15}).  Then, for any
$\W,\W'\in\Ga$ we show in \Thm{cheeger-and-morphisms} the implication
\begin{equation*}
  \W\lesse\W' \quad\Rightarrow \quad h_k(\W)\leq h_k(\W')
\end{equation*}
for all $k$.

The relation $\less$ can be extended by a shift $r\in\N_0$ in the list
of eigenvalues in which case we use the symbol $\less[r]$ (cf.\
\Def{with-shift}). From the point of view of linear algebra, the
spectral preorder is a very flexible generalisation of eigenvalue
interlacing known for matrices (see
e.g.~\cite[Theorem~4.3.28]{horn-johnson:13}).  Interlacing applied to
graphs is also treated in~\cite[Sections~2.5, and
3.2]{brouwer-haemers:12}.  Some of our elementary operations on graphs
can hence be also seen as a geometric interpretation of eigenvalue
interlacing.  In particular, we have already mentioned above that the
geometric preorder is stronger than the spectral preorder (cf.\
\Thm{homomorphism}), i.e., if $\W,\W'\in\Ga$ then $\W\lesse\W'$
implies $\W\less\W'$.

We can also compare in a natural way the same graphs with different
weights.  In particular, in \Cor{std-com} we show that the $k$-th
eigenvalue of the standard magnetic Laplacian is always bounded above
by the $k$-th eigenvalue of the combinatorial magnetic Laplacian for
every possible vector potential $\alpha$. 

In \Sec{geo} we use the preorders $\lesse$ and $\less$ (with
appropriate shifts) to give a quantitative estimate of the spectral
effect that elementary perturbations have on the spectrum of the
corresponding Laplacians (see \Thms{delete-edge}{edge-contra} in the
case of general weights).  We also analyse in \Subsec{composite}
composite perturbations like edge contraction or vertex deletion.  In
the special cases of combinatorial and standard weights, we have the
following situations (cf., \Cors{delete-edge}{vert-contra}).
\begin{itemize}
\item \emph{Edge deletion}: Let $e_0$ be an edge and $\W,\W'\in\Ga$,
  where $\W'=\W-e_0$ (i.e., $e_0$ has been removed from $\W$).
  \begin{itemize}
  \item If $\W,\W'\in\Co$, then $\W \less[1] \W'$ and $\W' \lesse \W$,
    hence $\W \less[1] \W' \less \W$.  
  \item If $\W,\W'\in\De$, then $\W \less[1] \W' \less[1] \W$.
\end{itemize}

\item \emph{Vertex contraction}: Let $v_1,v_2$ be vertices and
  $\W, \wt \W\in\Ga$ with $\wt\W =\W/ \{v_1,v_2\}$ (i.e., the vertices have
  been \emph{identified} in $\wt \W$ keeping all the edges, i.e.\
  loops or multiple edges may occur).
  \begin{itemize}
  \item If $\W,\wt \W\in\Co$, then $\W\lesse \wt \W$ and $\wt \W\less[r+1] \W$,
    hence $\W \less \wt \W \less[r+1] \W$, where
    $r=\min\{\deg^\G(v_1),\deg^\G(v_2)\}$.
  \item
    If $\W,\wt \W\in\De$, then $\W\lesse \wt \W$ and $\wt \W \less[1] \W$,
    hence $\W\less \wt \W \less[1] \W$.
  \end{itemize}
\end{itemize}
These results are sharp in the sense that, in general, one cannot
lower the value of the spectral shift.  Let us comment on related
results in the literature: Van den Heuvel~\cite[Lemma~2]{heuvel:95}
proves the result on edge deletion for the combinatorial Laplacian and
its signless version, see also~\cite[Theorem~3.2]{mohar:91}
and~\cite[Corollary~3.2]{fiedler:73}; the result is also used to
spectrally exclude the existence of a Hamiltonian cycle e.g.\ in the
Peterson graph (see~\cite[Theorem~3.3]{mohar:92})
and~\cite[Theorem~1]{heuvel:95}).

In~\cite[Theorem~2.3]{chen:04}, the authors consider the specific case of
the standard Laplacian and edge deletion; this result was generalised
to signed graphs in~\cite[Theorem~8]{atay-tuncel:14}.
Similarly,~\cite[Theorem~2.7]{chen:04} (and again generalised to the case
of signed graphs in~\cite[Theorem~10]{atay-tuncel:14}) prove a weaker
version of our vertex contraction for the standard Laplacian, namely
$\W \less[1] \wt \W \less[1] \W$ in our notation, under the additional
assumption that the vertices $v_1,v_2$ have combinatorial distance at
least $3$.  The latter restriction is mainly due to the fact that both
papers avoid the use of multigraphs, namely multiple edges and loops.

There are related results for so-called \emph{quantum graphs}
in~\cite{bkkm:19} (for the notion of quantum graphs, see the
references therein or e.g.~\cite{post:12}): Let $\mathbf M$ be a
compact metric graph, and let $\wt{\mathbf M}$ be the metric graph
obtained from $\mathbf M$ by contracting two vertices.  The spectrum
of $\mathbf M$ is the ordered list of eigenvalues of its standard
(also called Kirchhoff) Laplacian (repeated with respect to their
multiplicity).  Then Theorem~3.4 of~\cite{bkkm:19} states
$\mathbf M \less \wt{\mathbf M} \less[1] \mathbf M$ in the sense that
$\lambda_k(\mathbf M) \le \lambda_k(\wt{\mathbf M}) \le
\lambda_{k+1}(\mathbf M)$
for all $k$ (see also the references therein).  This result shows
again that the standard (also called ``geometric'') Laplacian is
closer to the continuous case than the combinatorial Laplacian on a
graph.

We present a wide range of applications of the preorders and relations
studied before: one can use the preorders to give a geometrical and
spectral ordering of graphs (see \Subsec{spec.ord}).  We also show
that the eigenvalues of a magnetic weighted Laplacian of a spanning
subgraph and the original graph are monotonous (i.e., the spectral
ordering holds, see \Cor{spanning.subgraph}).  In addition, we show
the stability of certain eigenvalues in graphs with a maximal
$d$-clique (see \Thm{cliques} and \Cor{cliques}).  Moreover, the
spectral preorder can also be used to show that high multiplicity
eigenvalues remain eigenvalues after ''small`` perturbations and
taking certain minors (see \Subsec{minors}).  We also prove the above
mentioned monotonicity of Cheeger's constant with respect to the
geometric preorder (see \Thm{cheeger-and-morphisms}).

Finally, the spectral preorder can be used also to refine the
bracketing technique (known for continuous spaces under the name
``Dirichlet-Neumann-bracketing'') for discrete graphs.  We can apply
the results on vertex contraction at the level of the finite
fundamental domain to detect in some examples new spectral gaps and to
almost determine completely the spectrum of the discrete Laplacian on
the covering space (see \Subsec{covering}).  Let us conclude
mentioning that spectral gaps of Schr\"odinger operators play an
important role in spectral analysis and mathematical physics
(see~\cite{higuchi-nomura:09,flp:18,
  korotyaev-saburova:15,korotyaev-saburova:18} and references
therein.)

 \subsection*{Structure of the article}
 In \Sec{graphs.cohomg} we introduce the main discrete structures
 needed in this article. In particular the class $\Ga$ of magnetic
 weighted graphs (\GAM-graphs for short) and the subclasses of
 \GAM-graphs with combinatorial and standard weights denoted by $\Co$
 and $\De$, respectively. We introduce in \Def{relation1} the
 geometric preorder $\lesse$ on $\Ga$ which is based on the notion of
 a magnetic weighted graph homomorphism.  We show that it is a partial
 order on the class of finite \GAM-graphs with combinatorial or
 standard weights.  In \Sec{mag.lap.spec.ord} we introduce the
 discrete magnetic Laplacian $\Delta_\alpha=d_\alpha^* d_\alpha$,
 where $d_\alpha$ is a discrete exterior derivative twisted by the
 magnetic potential $\alpha$. We also introduce in
 \Def{spectral-order} the spectral preorder $\less$ on $\Ga$ and
 consider also the possibility to compare shifted lists of eigenvalues
 of the corresponding Laplacians.  In \Sec{geo} we use the preorders
 $\lesse$ and $\less$ to give a quantitative estimate of the spectral
 effect that elementary perturbations have on the spectrum of the
 corresponding Laplacians (see \Thms{delete-edge}{edge-contra} in the
 case of general weights). We also analyse in \Subsec{composite}
 composite perturbations like edge contraction or vertex deletion.  In
 the final section we present our applications on spectral ordering of
 combinatorial graphs, graph minors, cliques, multiple eigenvalues,
 magnetic Cheeger constants and existence of spectral gaps on covering
 graphs.

%
%
\section{Magnetic weighted graphs and their homomorphisms}
\label{sec:graphs.cohomg}
%

In this section, we introduce the discrete structures needed and
mention some basic properties and examples.  We will consider discrete
locally finite graphs with arbitrary weights on vertices and edges as
well as an $R$-valued function on the edges which correspond to a
discrete analogue of the magnetic potential.  Here, $R$ is a subgroup
of the Abelian group $\RmodZ$ written additively. In this section,
graphs may be finite or infinite, and we will not assume that the graphs are
necessarily connected.

\subsection{Discrete graphs}
\label{sec:disc.graphs}

A \emph{discrete graph} (or, simply, a \emph{graph}) $G=(V,E,\bd)$
consists of two disjoint (and at most countable) sets $V=V(G)$ and
$E=E(G)$, the set of \emph{vertices} and \emph{edges}, respectively,
and a \emph{connection map} $\map {\bd=\bd^G} E {V \times V}$, where
$\bd e = (\bd_-e,\bd_+e)$ denotes the pair of the \emph{initial} and
\emph{terminal} vertex, respectively.  We also say that $e$
\emph{starts} at $\bd_-e$ and \emph{ends} at $\bd_+e$.  We assume that
each edge $e$ (also called \emph{arrow}) comes with its oppositely
oriented edge $\bar e$, i.e.  that there is an involution
$\map {\bar \cdot}EE$ such that $e \ne \bar e$ and
$\bd_\pm \bar e = \bd_\mp e$ for all $e \in E$.\footnote{Note that in
  this article we switched to the more standard notation that an edge
  $e$ (also called an \emph{arrow}) always has its oppositely
  oriented counterpart $\bar e$ in $E$; in our older papers
  (e.g.~in~\cite{flp:18}) we used the convention that $E$ contains
  only one arrow (not its inverted arrow), hence in our older notation
  $E$ together with $\bd e=(\bd_+e,\bd_-e)$ determines already an
  orientation of the graph.}  We allow \emph{multiple edges} (i.e.\
$\bd$ is not necessarily injective, hence edges cannot be represented
as pairs $(v_1,v_2)$ of vertices in general) and also \emph{loops}
(i.e.\ edges $e$ with $\bd_-e=\bd_+e$).  Note that also loops $e$ come
in pairs $e \ne \bar e$.  If $V(\G)$ has infinitely many vertices, we
say that the graph $\G$ is \emph{infinite}.  If $V(\G)$ has $n \in \N$
vertices, we say that $\G$ is a \emph{finite} graph of \emph{order}
$n$ and we write $\card G=\card{V(\G)}=n$.

A \emph{path} $p=(e_1,\dots,e_r)$ \emph{of length $r$} in a graph $G$
is a finite sequence of $r$ edges $e_1,\dots,e_r \in E$ such that
$\bd_+e_{k-1}=\bd_-e_k$ for all $k=1,\dots,r$.  We say that $p$
\emph{joins} the vertices $\bd_-e_1$ and $\bd_+e_r$.  The
\emph{combinatorial distance} of two vertices is the length of the
shortest path joining these two vertices.  A graph is called
\emph{connected} if for any vertices $x,y \in V$ there is a path $p$
joining $x$ and $y$.  For two subsets $V_\pm \subset V$ we denote by
\begin{equation*}
  E(V_-,V_+):=\set{e \in E}{\bd_-e \in V_-, \bd_+ e \in V_+}
\end{equation*}
the set of all edges starting in $V_-$ and ending in $V_+$.  Note that
$e \in E(V_-,V_+)$ if and only if $\bar e \in E(V_+,V_-)$.  If we need
to stress the graph $G$ to which $E(V_-,V_+)$ refers, we write
$E^G(V_-,V_+)$.  As a shortcut, we also set $E(V_0):=E(V_0,V_0)$,
$E(v,V_0):=E(\{v\},V_0)$ and $E(v,x)=E(\{v\},\{x\})$ etc.\ for
$v,x \in V$ and $V_0 \subset V$.  Moreover, we denote by
\begin{equation*}
  E_v := E(v,V) = \set{e \in  E}{\bd_- e=v}
\end{equation*}
the set of all edges starting at $v$ (alternatively we may also write $E_v^\G$).
We define the \emph{degree} of the vertex $v$ in the graph $G=(V,E,\bd)$ by
\begin{equation*}
  \deg(v) := \deg^G(v) = \card{E_v}.
\end{equation*}
Note that a loop at a vertex $v$ increases the degree by $2$.  We assume
that the graph is \emph{locally finite}, i.e.\ $\deg(v) < \infty$
for all $v \in V$.
We call a graph \emph{simple} if it has no loops and no multiple
edges, i.e.\ if $E(v,v)=\emptyset$ and $\abs{E(v,x)}\leq 1$ for all
$v,x \in V$, $v \ne x$.  For a simple graph, the connection map $\bd$
is injective, hence an edge $e$ can be identified with its pair
$(\bd_-e, \bd_+e)$ of its initial and terminal vertex.

We first consider two elementary operations on a graph,
\emph{contracting} or \emph{identifying vertices} while keeping the
edges and \emph{deleting edges} while keeping the vertices:
\begin{definition}[Contracting and splitting vertices]
  \label{def:glueing-vertices}
  Let $\G=(V,E,\bd)$ be a graph and let ${\sim}$ be an equivalence
  relation on $V$. 
  \begin{enumerate}
  \item The \emph{quotient graph} $\wt\G=\G/{\sim}$ is defined by
    $\wt\G=(\wt V,\wt E,\wt \bd)$, where $\wt V=V/{\sim}$, $\wt E=E$ and
    $\wt \bd e = ([\bd_-e],[\bd_+e])$ for all $e\in E$.  We also say
    that $\wt\G$ is obtained from $\G$ by \emph{contracting} or
    \emph{contracting vertices} according to ${\sim}$.

  \item If the relation ${\sim}$ identifies only the vertices
    $v_1,\dots, v_r \in V(\G)$ to one vertex in $\wt\G$, we also say
    that $\wt\G$ is obtained from $\G$ by \emph{contracting} or
    \emph{identifying} the vertices $v_1, \dots, v_r\in V$.  We write
    $\wt\G=\G/\{v_1,\dots, v_r\}$ for short (see \Fig{vercon} for the
    case $r=2$).
	
  \item The reverse operation is called \emph{splitting}: we say that
    $\G$ is a \emph{vertex splitting} of $\wt \G$ if there is an
    equivalence relation ${\sim}$ on $V(\G)$ such that
    $\wt \G=\G/{\sim}$.
  \end{enumerate}
\end{definition}
\begin{remark}[Loops and multiple edges after vertex contraction]
  \label{rem:vx-contr-convention}
  Let us stress that in contrast to many combinatorial graph theory
  books we use a ``topological'' contraction of vertices as
  in~\cite[Section~2.3]{bondy-murty:08}: if we contract two \emph{adjacent}
  vertices $v_1$ and $v_2$, all edges joining $v_1$ and $v_2$ become
  \emph{loops} in $\G/\{v_1,v_2\}$.  Moreover, contracting two
  vertices of combinatorial distance $2$, leads to a double (or
  multiple) edge.
\end{remark}
\begin{definition}[Deleting and adding edges]
  \label{def:deleting edges}
  Let $\G=(V,E,\bd)$ be a graph and let $E_0 \subset E$.
  \begin{enumerate}
  \item We denote by $\G-E_0$ the graph given by
    $(V, E \setminus E_0, \bd \restr{E \setminus E_0})$.  We call
    $\G-E_0$ the graph obtained from $\G$ by \emph{deleting the edges}
    $E_0$.  If $E_0=\{e_0\}$ we simply write $\G-e_0$ instead of
    $\G - \{e_0\}$ (see, e.g.\ Figures~\ref{subfig:a}
    and~\ref{subfig:b}).

  \item The reverse operation is called \emph{adding edges}: We say
    that $\G$ is obtained from a graph $\G'$ by \emph{adding the
      edges} $E_0 \subset \G'$ if $\G'=\G-E_0$; for short we write
    $\G=\G'+E_0$ and also $\G=\G'+e_0$ if $E_0=\{e_0\}$.
  \end{enumerate}
\end{definition}

The operation of contracting two adjacent vertices and deleting the
edges joining them is called \emph{edge contraction}, and it is a
combination of contracting $v_1$ and $v_2$ and deleting the adjacent
edges $E(v_1,v_2)$.  Note that the order of the operations does not matter: first delete
$E(v_1,v_2)$ and then contracting $v_1,v_2$ or first contracting
$v_1,v_2$ and then delete the loops obtained from $E(v_1,v_2)$ gives
the same graph.

\begin{definition}[Isolated and pendant vertices, pendant and bridge edges]
  \label{def:pendant}
  Let $\G=(V,E,\bd)$ be a discrete graph.
  \begin{enumerate} 
  \item A vertex $v_0 \in V$ is called \emph{isolated} if $\deg(v_0)=0$.
  \item A vertex $v_0 \in V$ is called \emph{pendant} if $\deg(v_0)=1$.
  \item An edge $e_0 \in E$ is \emph{pendant} if at least one of its
    vertices $\bd_\pm e_0$ is a pendant vertex.
  \item An edge $e_0 \in E$ is a \emph{bridge (edge)} if $G-e_0$ has
    one more connected component than $G$.
  \end{enumerate}
\end{definition}

Another way of producing graphs from given ones are \emph{(induced)
  subgraphs}:
\begin{definition}[(Induced) subgraphs]
  \label{def:induced-subgraph}
  Let $\G=(V,E,\bd)$ be a graph.  A \emph{subgraph}
  $\G_0=(V_0,E_0,\bd \restr {E_0})$ of $\G$ is given by subsets
  $V_0 \subset V$ and $E_0 \subset E(V_0):=E(V_0,V_0)$.  An
  \emph{induced subgraph} is a subgraph such that $E_0=E(V_0)$.  The
  latter graph is also called \emph{subgraph induced by $V_0$} and is denoted
  by $\G[V_0]$.
\end{definition}
Note that $\bd(E(V_0))$ indeed maps into $V_0 \times V_0$: We have
$e \in E(V_0):=E(V_0,V_0)$ if and only if $\bd_\pm e \in V_0$, hence
$\bd e \in V_0 \times V_0$.

We introduce next another standard notation from graph theory:
\begin{definition}[Graph homomorphisms]
  \label{def:graph-homo}
  Let $\G=(V,E,\bd)$ and $\G'=(V',E',\bd')$ be two graphs.
  We say that $\map \pi {\G} {\G'}$ is a \emph{graph homomorphism}, if
  $\pi$ is a map on the vertices $\map {\vx \pi} V {V'}$ and on the
  edges $\map {\ed \pi} E {E'}$ (denoted by the same symbol $\pi$)
  such that
  \begin{equation}
    \label{eq:graph-homo-prop}
    \pi(\bd_+ e)=\bd_+'(\pi e)
    \qquadtext{and}
    \pi(\bd_- e)=\bd_-'(\pi e)
  \end{equation}
  for all $e \in E$.  If $\vx \pi$ and $\ed \pi$ are both bijective
  then $\pi$ is called an \emph{isomorphism.}  If there exists an
  isomorphism between $\G$ and $\G'$, then the graphs are called
  \emph{(graph-)isomorphic}, for short $\G\simeq\G'$.
\end{definition}

\begin{example}
  \label{ex:graph-homo}
  Let $\G=(V,E,\bd)$ and $\G'=(V',E',\bd')$ be two graphs,
  then some basic examples of homomorphisms are given as follows:
  \begin{enumerate}
  \item
    \label{graph-homo:2}
    Let ${\sim}$ be an equivalence relation on $V$, then the quotient
    map $\map {\kappa:=\kappa_{\sim}} \G {\G/{\sim}}$ given by
    $\kappa(v)=[v]$ and $\kappa(e)=e$ is a graph homomorphism.
	
  \item
    \label{graph-homo:3}
    Let $E_0 \subset E$ then the inclusion
    $\map {\iota:=\iota_{E_0}} {\G-E_0} \G $ is a graph homomorphism.
  \item Let $\map \pi \G {\G'}$ be a graph homomorphism.  Then the
    \emph{image graph} is defined by
    $\pi(\G):=(\pi(V),\pi(E),\bd'\restr{\pi(E)})$.  Note that this
    indeed defines a graph as
    $\bd'(\pi(E)) \subset \pi(V)\times \pi(V)$
    by~\eqref{eq:graph-homo-prop}, and $\pi(\G)$ is a subgraph of
    $\G'$.  Moreover, the inclusion $\map \iota {\pi(\G)} {\G'}$ is a
    graph homomorphism, injective on the vertex and edge set.
  \end{enumerate}	
\end{example}

The next lemma is an immediate consequence of Eq.~\eqref{eq:graph-homo-prop}:
\begin{lemma}
  \label{lem:graph-homo-edgeset}
  Let $\map \pi \G {\G'}$ be a graph homomorphism, then
  $\pi^{-1}(E^{\G'}(V_-',V_+'))=E^\G(\pi^{-1}(V_-'),\pi^{-1}(V_+'))$
  for $V_\pm' \subset V'$.
\end{lemma}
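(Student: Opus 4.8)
The plan is to unwind the definitions on both sides and check that a point lies in one set if and only if it lies in the other. Fix subsets $V_\pm' \subset V'$ and let $e \in E^\G$ be arbitrary. By \Def{graph-homo}, Eq.~\eqref{eq:graph-homo-prop} tells us that $\pi(\bd_- e) = \bd_-'(\pi e)$ and $\pi(\bd_+ e) = \bd_+'(\pi e)$. The membership $e \in \pi^{-1}(E^{\G'}(V_-',V_+'))$ means precisely that $\pi e \in E^{\G'}(V_-',V_+')$, which by the definition of $E^{\G'}(V_-',V_+')$ is equivalent to $\bd_-'(\pi e) \in V_-'$ and $\bd_+'(\pi e) \in V_+'$.

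First I would rewrite those two conditions using Eq.~\eqref{eq:graph-homo-prop}: $\bd_-'(\pi e) \in V_-'$ becomes $\pi(\bd_- e) \in V_-'$, i.e.\ $\bd_- e \in \pi^{-1}(V_-')$ (here $\pi$ on the right means $\vx \pi$, the vertex map); similarly $\bd_+'(\pi e) \in V_+'$ becomes $\bd_+ e \in \pi^{-1}(V_+')$. Taking the conjunction, $e \in \pi^{-1}(E^{\G'}(V_-',V_+'))$ if and only if $\bd_- e \in \pi^{-1}(V_-')$ and $\bd_+ e \in \pi^{-1}(V_+')$, which by the definition of $E^\G(\cdot,\cdot)$ is exactly the statement $e \in E^\G(\pi^{-1}(V_-'),\pi^{-1}(V_+'))$. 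Since $e$ was arbitrary, the two edge sets coincide.

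There is essentially no obstacle here; the only point requiring a moment's care is keeping straight that the symbol $\pi$ is being used for both $\vx \pi$ and $\ed \pi$, so that "$\pi e \in E^{\G'}(V_-',V_+')$" involves the edge map while "$\bd_\pm e \in \pi^{-1}(V_\pm')$" involves the preimage under the vertex map; Eq.~\eqref{eq:graph-homo-prop} is exactly the compatibility statement that lets one pass between the two. One could also phrase the whole argument slightly more slickly by noting $\pi^{-1}(E^{\G'}(V_-',V_+')) = \bd_-^{-1}(\pi^{-1}(V_-')) \cap \bd_+^{-1}(\pi^{-1}(V_+'))$ using $\bd_\pm' \circ \ed\pi = \vx\pi \circ \bd_\pm$, but the elementwise version above is cleanest and I would present that.
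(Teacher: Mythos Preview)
Your proof is correct and is exactly the argument the paper has in mind: the paper simply states that the lemma ``is an immediate consequence of Eq.~\eqref{eq:graph-homo-prop}'' without spelling out the details, and your elementwise unwinding of the definitions via $\bd_\pm'(\pi e)=\pi(\bd_\pm e)$ is precisely that immediate consequence.
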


We finish this subsection of graph theory with the following
observation that will be useful in the following.
\begin{lemma}
  \label{lem:graph-aux}
  Let $\G=(V,E,\bd)$ and $\G'=(V',E',\bd')$ be two graphs and
  let $\map{\pi}{\G}{\G'}$ be a graph homomorphism.
  \begin{enumerate}
  \item 
    \label{graph-aux.a}
    If $\pi$ is injective on the edges, then
    \begin{equation*}
      \sum_{v\in V,\pi(v)=v'}\deg^\G(v)\leq\deg^{\G'}(v')
      \quad\text{for every $v'\in V'$.} 
    \end{equation*}
  \item 
    \label{graph-aux.b}
    If $\pi$ is injective, but not surjective on the edges, then there
    is a vertex $v_0' \in V'$ such that
    \begin{equation}
      \sum_{v\in V,\pi(v)=v_0'}\deg^\G(v)<\deg^{\G'}(v_0').
    \end{equation}
  \end{enumerate}
\end{lemma}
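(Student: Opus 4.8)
The key observation is that the map $\ed\pi$ restricts, for each vertex $v'\in V'$, to a map from the set $\pi^{-1}(E_{v'}^{\G'})=E^\G(\pi^{-1}(v'),V)$ (by \Lem{graph-homo-edgeset} with $V_-'=\{v'\}$ and $V_+'=V'$) into $E_{v'}^{\G'}$. Now the domain of this restricted map decomposes as a disjoint union
\begin{equation*}
  \pi^{-1}(E_{v'}^{\G'}) \;=\; \bigdcup_{v\in V,\ \pi(v)=v'} E_v^\G,
\end{equation*}
since an edge $e$ with $\pi(\bd_-e)=v'$ has $\bd_-e$ in exactly one fiber $\pi^{-1}(v')$, and conversely every edge starting at some $v$ with $\pi(v)=v'$ maps into $E_{v'}^{\G'}$ by \eqref{eq:graph-homo-prop}. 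Taking cardinalities gives $\card{\pi^{-1}(E_{v'}^{\G'})}=\sum_{v\in V,\pi(v)=v'}\deg^\G(v)$.

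For part~\itemref{graph-aux.a}: if $\ed\pi$ is injective, then its restriction to $\pi^{-1}(E_{v'}^{\G'})$ is an injection into $E_{v'}^{\G'}$, so
\begin{equation*}
  \sum_{v\in V,\ \pi(v)=v'}\deg^\G(v)
  \;=\;\card{\pi^{-1}(E_{v'}^{\G'})}
  \;\le\;\card{E_{v'}^{\G'}}
  \;=\;\deg^{\G'}(v'),
\end{equation*}
which is the claim. (Here I should note local finiteness is in force, so all these cardinalities are finite and the inequality is meaningful; if $v'$ has no preimage the left-hand side is an empty sum, equal to $0$, and the inequality is trivial.)

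For part~\itemref{graph-aux.b}: if $\ed\pi$ is injective but not surjective, pick an edge $e_0'\in E'\setminus\pi(E)$ and set $v_0':=\bd_-'e_0'$. Then $e_0'\in E_{v_0'}^{\G'}$ but $e_0'$ lies in the complement of the image of the restricted injection $\ed\pi\colon\pi^{-1}(E_{v_0'}^{\G'})\to E_{v_0'}^{\G'}$, so that injection is not surjective. Since $E_{v_0'}^{\G'}$ is finite, a non-surjective injection into it has domain of strictly smaller cardinality, giving
\begin{equation*}
  \sum_{v\in V,\ \pi(v)=v_0'}\deg^\G(v)
  \;=\;\card{\pi^{-1}(E_{v_0'}^{\G'})}
  \;<\;\card{E_{v_0'}^{\G'}}
  \;=\;\deg^{\G'}(v_0').
\end{equation*}

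The argument is essentially bookkeeping once \Lem{graph-homo-edgeset} is invoked; the only point requiring a little care is the disjointness of the decomposition of $\pi^{-1}(E_{v'}^{\G'})$ into the fibers $E_v^\G$ over $v\in\pi^{-1}(v')$, which relies on the fact that distinct vertices have disjoint out-edge sets, i.e.\ that $\bd_-$ is well-defined on each edge. No single step is a genuine obstacle; the main thing to get right is which endpoint ($\bd_-$ versus $\bd_+$) one indexes by and to remember that in part~\itemref{graph-aux.b} one must extract $v_0'$ from the missing edge rather than hoping an arbitrary $v'$ works.
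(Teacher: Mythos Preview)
Your proof is correct and takes essentially the same approach as the paper's: both identify the set $A_{v'}=\bigcup_{v\in\pi^{-1}(v')}E_v^\G$ (which you write as $\pi^{-1}(E_{v'}^{\G'})$ via \Lem{graph-homo-edgeset}), observe that the $E_v^\G$ are pairwise disjoint, and then use injectivity of $\ed\pi$ on this set into $E_{v'}^{\G'}$ to compare cardinalities; for part~\itemref{graph-aux.b} you likewise pick $e_0'\in E'\setminus\pi(E)$ and set $v_0'=\bd_-'e_0'$, exactly as the paper does. The only cosmetic difference is that you invoke \Lem{graph-homo-edgeset} explicitly to name the set, whereas the paper defines $A_{v'}$ directly.
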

Note that the sum over $v \in V$ with $\pi(v)=v_0'$ could be $0$,
namely, if $v_0'$ is not in the range of $\pi$.

\begin{proof}
  \itemref{graph-aux.a}~For $v'\in V'$ let
  \begin{equation*}
    A_{v'}
    :=\bigcup_{v\in V, \pi(v)=v'}E_v^\G
    = \set{e \in E}{\pi(\bd_-e) =v'}
  \end{equation*}
  be the set of edges in $\G$ that are starting at $v$ for all
  preimages $v$ of $v'$ under $\pi$.  Then the map
  $\map \pi {A_{v'}} {E_{v'}^{\G'}}$ is well-defined, and injective
  because $\pi$ is.  Moreover, note that $(E_v^\G)_{v \in V}$ is a
  disjoint family of sets (as each edge has only one initial vertex),
  hence we have
  \begin{equation*}
    \sum_{v\in V,\pi(v)=v'}\card{E_v^\G}
      \leq \card {E_{v'}^{\G'}}.
  \end{equation*}
  The desired inequality follows from $\deg^\G(v) = \card{E_v^\G}$ and
  $\deg^{\G'}(v')=\card{E_{v'}^{\G'}}$.

  \itemref{graph-aux.b}~Choose an edge $e'_0 \in E' \setminus \pi(E)$
  and $v_0'=\bd'_-e'_0$.  Then
  $\map \pi {A_{v_0'}}{E_{v_0'}(G') \setminus \{e'_0\}}$ is injective and
  well-defined as a map, hence we conclude
  \begin{equation*}
    \sum_{v\in V,\pi(v)=v_0'}\card{E_v^\G}
      \leq \card {E_{v_0'}^{\G'}}-1
      <\card {E_{v_0'}^{\G'}}
  \end{equation*}
  proving the strict inequality claimed.
\end{proof}

\subsection{Weights on graphs}
\label{sec:disc.weighted.graphs}

Given a graph $G=(V, E,\bd)$ we consider a \emph{weight} on it, i.e.\ 
a \emph{vertex} and an \emph{edge weight} $\map \m V {(0,\infty)}$ and
$\map \m E {(0,\infty)}$ 
associating to a vertex $v$ its weight $\m(v)$ and to an edge $e$
its weight $\m_e$. 
For subsets $V_0 \subset V$ and $E_0 \subset E$, we may interpret $\m$
as a \emph{discrete measure} on the corresponding sets, and we use the
natural notation
\begin{equation}
  \label{eq:weights.meas}
  \m(V_0)=\sum_{v \in V_0} \m(v)
  \qquadtext{and}
  \m(E_0)=\sum_{e \in E_0} \m_e.
\end{equation}
\begin{example}[Standard and combinatorial weights]
  Given a graph $G=(V,E,\bd)$ one can define two important intrinsic
  weights on it.  The \emph{standard} weight in $V$ given by
  $\m(v):=\deg(v)$, $v\in V$, and $\m_e:=1$, $e\in E$, and we denote
  it simply by $\deg$. Note that $w(v)>0$ implies that a weighted
  graph with standard weights has no isolated vertices, i.e.\ vertices
  of degree $0$ (nevertheless see \Rem{std.deg.zero} for the
  convention with standard weights for the associated Laplacian in
  case that the graph has isolated vertices). The
  \emph{combinatorial} weight is given by $\m(v):=1$, $v\in V$, and
  $\m_e:=1$, $e\in E$ and we denote it by $\com$.
\end{example}
An edge weight on a graph determines a so-called \emph{weighted
  degree} of a vertex defined by
\begin{equation*}
  \deg^\m(v) := \m(E_v) = \sum_{e \in E_v} \m_e.
\end{equation*}
Recall that a loop counts twice in $E_v$.  In particular, the
combinatorial degree $\deg(v)$ agrees with the weighted degree
$\deg^\m v$ iff the edge weight equals $1$ for all edges.  We call the
weight \emph{normalised} if
\begin{equation*}
  \deg^\m(v) = \m(v),
  \qquadtext{or, equivalently,}
  \sum_{e \in E_v} \m_e = \m(v),\quadtext{ for all}  v \in  V.
\end{equation*}

We define the \emph{relative weight}
$\map {\varrho:=\varrho_\m} { V} {(0,\infty)}$ of a weighted graph
$(\G,\m)$ by
\begin{equation}
  \label{eq:rho}
  \varrho_\m(v)
  :=\frac {\deg^\m(v)} {\m(v)}
  = \frac 1 {\m(v)}\sum_{e \in E_v} \m_e;
\end{equation}
We assume that the relative weight is \emph{bounded}, i.e.\ the
\emph{maximal $\m$-degree} of $(\G,\m)$ is (uniformly) bounded:
\begin{equation}
  \label{eq:rho.bdd}
  \varrho_\infty:=\sup_{v \in  V} \varrho_\m(v) < \infty.
\end{equation}
Note that for the standard weight, or, more generally, for a
normalised weight, the relative weight is just $\varrho_\m=1$.  In
particular, the relative weight for any normalised weight is
bounded.\footnote{Note that in some references the \emph{standard}
  weight is also called \emph{normalised}.}

For the combinatorial weight, the relative weight is just the usual
degree, hence the relative weight is bounded if and only if the degree
of the graph is bounded.
\subsection{Magnetic potentials}
\label{sec:mag.field.pot}

Let $G=(V,E,\bd)$ be a graph and let $R$ be a subgroup of
$\RmodZ$ which we write additively. We consider the \emph{cochain
  groups} of $R$-valued functions on vertices and edges, which we
denote by
\begin{equation*}
  C^0(G,R)
  := \bigset{\map \xi V R}
  {\text{$\xi$ map}}
  \qquadtext{and} 
  C^1(G,R) 
  := \bigset{\map \alpha E R}
  {\forall e \in E\colon \alpha_{\bar e}=-\alpha_e},
\end{equation*}
respectively. The so-called \emph{coboundary operator} is given by
\begin{equation*}
  \map \de {C^0(G,R)}{C^1(G,R)}, 
  \qquad
  (\de \xi)_e=\xi(\bd_+e)-\xi(\bd_-e).
\end{equation*}
\begin{definition}
  \label{def:mag.pot}
  Let $G=(V,E,\bd)$ be a graph and $R$ be a
  subgroup of $\RmodZ$.  \indent
  \begin{enumerate}
  \item An \emph{$R$-valued magnetic potential} $\alpha$ is an element
    of $C^1(G,R)$.

  \item We say that $\alpha, \wt \alpha \in C^1(G,R)$ are
    \emph{cohomologous} or \emph{gauge-equivalent} and denote this as
    $\wt \alpha \sim \alpha$ if $\wt \alpha-\alpha$ is exact, i.e.\ if
    there is $\xi \in C^0(G,R)$ such that $\de \xi=\wt \alpha-\alpha$,
    and $\xi$ is called the \emph{gauge}. We denote the equivalence
    class or \emph{cohomology class} by
    $[\alpha]=\set{\wt \alpha \in C^1(G,R)}{\wt \alpha \sim \alpha}$.
    We say that $\alpha$ is \emph{trivial}, if it is cohomologous to
    $0$.
  \end{enumerate}
\end{definition}

In the sequel, we will omit the Abelian group $R$ for simplicity of
notation, e.g.\ we will write $C^1(G)$ instead of $C^1(G,R)$ for the
group of magnetic potential etc.

The next result says that if the vector potential is supported on a
bridge, then it is trivial.
\begin{lemma}
  \label{lem:bridge}
  Let $\G$ be graph such that $e_0 \in E(\G)$ is a bridge edge.  If
  $\alpha$ and $\wt \alpha$ are two vector potentials having different
  values only
  on $e_0$ (i.e.\ $\alpha_e=\wt \alpha_e$ for all
  $e\in E(\G-e_0)=E(\G) \setminus \{e_0\}$), then
  $\alpha \sim \wt \alpha$.
\end{lemma}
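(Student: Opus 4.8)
The plan is to construct an explicit gauge $\xi \in C^0(\G)$ realizing $\de\xi = \wt\alpha - \alpha$. Since $\wt\alpha$ and $\alpha$ agree on every edge except $e_0$ (and its reverse $\bar e_0$), the difference $\beta := \wt\alpha - \alpha \in C^1(\G)$ is supported entirely on $\{e_0, \bar e_0\}$, with $\beta_{e_0} = \wt\alpha_{e_0} - \alpha_{e_0} =: t \in R$ and $\beta_{\bar e_0} = -t$. So the claim reduces to: any vector potential supported on a single bridge edge is exact.

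First I would use the defining property of a bridge: $\G - e_0$ has one more connected component than $\G$. Removing $e_0$ therefore splits the component of $\G$ containing $e_0$ into exactly two pieces; write $V = V_- \sqcup V_+$ where $V_\pm$ are the vertex sets of these two parts together with the rest of the graph distributed arbitrarily but consistently — more precisely, let $V_-$ be the union of all connected components of $\G - e_0$ that lie on the $\bd_- e_0$ side and $V_+$ the remaining components (including the one containing $\bd_+ e_0$). The key structural fact is that every edge $e \ne e_0, \bar e_0$ of $\G$ has both endpoints in $V_-$ or both in $V_+$, because such $e$ survives in $\G - e_0$ and hence cannot join the two separated components; and $e_0$ itself goes from $V_-$ to $V_+$. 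Then I would define $\xi(v) = 0$ for $v \in V_-$ and $\xi(v) = t$ for $v \in V_+$.

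Next I would verify $\de\xi = \beta$ edge by edge. For $e \ne e_0, \bar e_0$, both $\bd_\pm e$ lie on the same side, so $(\de\xi)_e = \xi(\bd_+ e) - \xi(\bd_- e) = 0 = \beta_e$. For $e_0$ we have $\bd_- e_0 \in V_-$ and $\bd_+ e_0 \in V_+$, so $(\de\xi)_{e_0} = t - 0 = t = \beta_{e_0}$, and the case $\bar e_0$ follows by the antisymmetry $\beta_{\bar e_0} = -\beta_{e_0}$ together with $(\de\xi)_{\bar e_0} = -(\de\xi)_{e_0}$. Hence $\wt\alpha - \alpha = \de\xi$ is exact, so $\alpha \sim \wt\alpha$ by \Def{mag.pot}.

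The only real subtlety — not so much an obstacle as a point requiring care — is the bookkeeping of the bipartition $V = V_- \sqcup V_+$ when $\G$ is disconnected or has components not touched by $e_0$: one must be sure that the assignment of those extra components to $V_-$ or $V_+$ is irrelevant (it is, since no edge crosses between $V_-$ and $V_+$ except $e_0$), and that the two genuinely-separated pieces do land on opposite sides, which is exactly what "$e_0$ is a bridge" guarantees. I would also note the degenerate possibility that $e_0$ is a loop, but a loop is never a bridge, so this does not arise. Everything else is a routine check.
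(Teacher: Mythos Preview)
Your proof is correct and follows essentially the same approach as the paper's: both construct a gauge $\xi$ that is constant on each side of the bipartition $V = V_- \sqcup V_+$ induced by removing the bridge, with the jump across $e_0$ equal to $\wt\alpha_{e_0}-\alpha_{e_0}$. Your version is in fact slightly more careful than the paper's about the bookkeeping when $\G$ has additional connected components not containing $e_0$, and about excluding the loop case.
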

\begin{proof}
  Denote by $C_+$ (respectively, $C_-$) the two connected components of
  $\G-e_0$ with $\bd_+e_0\in C_+$ (respectively, $\bd_-e_0\in C_-$).  Define a
  function $\map \xi {V(\G-e_0)=V(\G)} R$ by
  $\xi(v)=\alpha_{e_0}-\wt \alpha_{e_0}$ for all $v \in C_+$ and $0$
  for all $v\in C_-$.  It follows that
  $(\de \xi)_e=\alpha_e-\wt \alpha_{e}$ hence $\alpha \sim \wt\alpha$.
\end{proof}

\subsection{\GAM-graphs and geometric preorder}
\label{sec:gam-graphs-morphisms}
In the following definition we collect all relevant structure needed:
a discrete weighted graph with vector potential.

\begin{definition}[Magnetic weighted graph, \GAM-graph]
  \label{def:mag.ext.der}
  We call $\W=\Wfull$ a \emph{magnetic weighted graph}
  (\emph{\GAM-graph} for short) if $\G=(V,E,\bd)$ is a discrete
  graph, $\m$ is a weight on the graph and
  $\alpha \in C^1(G)$ is an $R$-valued magnetic potential, i.e.\ a map
  $\map \alpha E R$ such that $\alpha_{\bar e}=-\alpha_e$ for all $e\in E$,
  where $R$ is a subgroup of $\RmodZ$.
\end{definition}
Note that $R$ can be chosen a priori.  If we choose $R=\{0\}$, then
the corresponding Laplacian defined in \Sec{disc.mag.lap} is the usual
Laplacian (without magnetic potential).  If we choose $R=\{0,\pi\}$,
then the magnetic potential is also called \emph{signature}, and $\W$
is called a \emph{signed graph} (see, e.g.~\cite{llpp:15} and
references therein for details).  This setting includes the so-called
\emph{signless} Laplacian (see \Exenum{lapl.weights}{lapl.signless})
by choosing $\alpha_e=\pi$ for all $e \in E$.

\begin{definition}[Classes of \GAM-graphs]
  \label{def:notation}
  We denote by $\mathscr{G}$ the class of all \GAM-graphs. We denote
  the subclasses of \GAM-graphs with combinatorial weight simply by
  $\Co$ and with standard weights just by $\De$.  Moreover, for a
  symmetric subset $R_0$ of $R$ (not necessarily a subgroup but being
  invariant under reflections, i.e.\ if $t\in R_0$ then $-t\in R_0$)
  we write
  \begin{equation*}
    \Ga^{R_0}:=\bigset{\W=\Wfull\in\Ga}{\alpha_e \in R_0,\;e\in E}
    \qquadtext{and}
    \Ga^t := \Ga^{\{t,-t\}}
  \end{equation*}
  for the subclass of \GAM-graphs having magnetic potential with
  values in $R_0$ respectively with constant value $t\in R$.  Similarly, we
  denote by $\De^{R_0}$ resp.\ $\De^t$ and $\Co^{R_0}$ resp.\ $\Co^t$
  the \GAM-graphs with combinatorial and standard with vector
  potential with values in $R_0$ respectively with constant value $t$.
\end{definition}	

We now introduce an important notion for this article:
\begin{definition}[\GAM-homomorphism]
  \label{def:hom.gam-graph}
  Let $\W=\Wfull$ and $\W'=\Wtwofull$ be two \GAM-graphs.
  We say that
  \begin{equation*}
    \map \pi {\W} {\W'} \quad \text{is an \emph{\GM-homomorphism}}
  \end{equation*}
  if the map satisfies the following two conditions:
  \begin{enumerate}
  \item
    \label{hom.mow-graph.a}
    $\map {\pi} \G {\G'}$ is a graph homomorphism
    (\Def{graph-homo}).
  \item
    \label{hom.mow-graph.b}
    The magnetic potential is invariant:
    $\alpha = \alpha' \circ \pi$, i.e, $\alpha_e = \alpha'_{\pi e}$
    for all $e \in E(\G)$.
  \end{enumerate}
  Moreover, we say that
  \begin{equation*}
    \map{\pi}{\W}{\W'} \quad \text{is an \emph{\GAM-homomorphism}}
  \end{equation*}
  if it is an \GM-homomorphism and satisfies the following two vertex
  and edge weight inequalities:
  \begin{enumerate}
    \addtocounter{enumi}{2}
  \item
    \label{hom.mow-graph.c}
    The following vertex weight inequality holds: 
    \begin{equation*}
      (\pi_*\m)(v')
      :=\sum_{v \in V, \pi(v)=v'} \m(v)
      \ge \m'(v')
      \qquadtext{for all $v' \in V'$,}
    \end{equation*}
    i.e.\ the push-forward vertex measure $\pi_*\m$ fulfils the
    inequality $\pi_*\m \ge\m'$ pointwise.

  \item
    \label{hom.mow-graph.d}
    The following edge weight inequality holds: 
    \begin{equation*}
      (\pi_*\m)_{e'}:=
      \sum_{e \in E, \pi(e)=e'} \m_e
      \le \m'_{e'}
      \qquadtext{for all $e' \in E'$,}
    \end{equation*}
    i.e.\ the push-forward edge measure $\pi_*\m$ fulfils the
    inequality $\pi_*\m \le\m'$ pointwise.
  \item
    \label{hom.mow-graph.e}
    We say that $\pi$ is \emph{vertex} or \emph{edge measure
      preserving} if equality holds in~\itemref{hom.mow-graph.c}
    or~\itemref{hom.mow-graph.d}, respectively, i.e.\ $\pi_*\m=\m'$
    for the vertex or the edge measure.  We simply say that $\pi$
    is \emph{measure preserving} if $\pi$ is vertex \emph{and} edge
    measure preserving.

    \item
    \label{hom.mow-graph.f}
    We say that $\pi$ is an \emph{\GAM-isomorphism}, if $\pi$ is
    bijective, and if $\pi$ and $\pi^{-1}$ are both
    \GAM-homomorphisms; in other words, if $\pi$ is a graph
    isomorphism, if $\alpha=\alpha'\circ \pi$ and if
    $\m=\m'\circ \pi$.  We say that the two \GAM-graphs are
    \emph{(\GAM-)isomorphic} (denoted by $\W\simeq \W'$) if there
    exists \aGAM-isomorphism between $\W$ and $\W'$.
  \end{enumerate}
\end{definition} 

Some examples of \GM-homomorphism and \GAM-homomorphism are the following.

\begin{example}[\GAM-homomorphisms]
  \label{ex:gam-homo}
  Let $\W=\Wfull$,, $\W'=\Wtwofull$ and
  $\wt \W=(\wt G,\wt \alpha,\wt \m)$ be \GAM-graphs.
  \begin{enumerate}
  \item
    \label{ex:gam-homo1}
    If $\G=\G'$ and $\alpha=\alpha'$, the identity
    $\map{\id_\G}{\G}{\G}$ is \aGM-homomorphism.  In order that
    $\id_\G$ is \aGAM-homomorphism, the weights must fulfil
    \begin{equation*}
      \m(v) \ge \m'(v) \quadtext{for all $v \in V(\G)$}
      \text{and}\quad
      \m_e \le \m'_e \quad\text{for all $e \in E(\G)$.}
    \end{equation*}
    In particular, this is true if $\m=\deg$ and $\m'=\com$, i.e.\ 
    $\map {\id_\G} {(\G,\alpha,\deg)}{(\G,\alpha,\com)}$ is
    \aGAM-homomorphism.
  \item
    \label{ex:gam-homo2}
    If $\wt \G=\G/{\sim}$ for some equivalence relation ${\sim}$ on
    $V(\G)$, then the quotient map $\map{\kappa}{\G}{\wt\G}$ of
    \Exenum{graph-homo}{graph-homo:2} is \aGM-homomorphism of the
    graphs with magnetic potentials $(\G,\alpha)$ and
    $(\wt \G,\alpha)$.  In order that $\kappa$ is \aGAM-homomorphism,
    the weights must fulfil
    \begin{equation*}
      \sum_{v\sim v_0}\m(v) \ge \wt\m([v_0]) 
      \quadtext{for all $[v_0] \in V(\wt\G)$}
      \text{and}\quad
      \m_e \le \wt\m_e \quad\text{for all $e \in E(\wt\G)$.}
    \end{equation*}
    This condition is automatically true when both \GAM-graphs $\W$
    and $\W'$ have combinatorial or standard weights.  Moreover,
    $\kappa$ is even measure preserving for standard weights.
  \item
    \label{ex:gam-homo3} 
    If $\G'=\G-e_0$ and $\alpha'=\alpha\restr{E(\G')}$, let
    $\map{\iota}{\G'}{\G}$ be the inclusion map of
    \Exenum{graph-homo}{graph-homo:3}, then $\iota$ is
    \aGM-homomorphism.  In order that $\iota$ is \aGAM-homomorphism,
    the weights must fulfil
    \begin{equation*}
      \m'(v) \ge \m(v) \quadtext{for all $v \in V(\G)$}
      \text{and}\quad
      \m'_e \le \m_e \quad\text{for all $e \in E(\G)$.}
    \end{equation*}
    The condition on the edges is true when both \GAM-graphs $\W$ and
    $\W'$ have combinatorial or standard weights. However, the
    condition on the vertex weights is true only for the combinatorial
    case ($\iota$ is even vertex measure preserving here).  Note that
    $\iota$ is \emph{not} edge measure preserving both for the
    combinatorial and standard weight.
  \item
    \label{ex:gam-homo5}
    Let $\W=\Wfull$, $\W'=\Wtwofull$ and $\W''=(\G'',\alpha'',w'')$ be
    three \GAM-graphs.  If $\map \pi \G {\G'}$ and
    $\map \tau {\G'}{\G''}$ are \GAM-homomorphisms, then it is easy to
    see that $\map{\tau \circ \pi}\G {\G''}$ is also
    \aGAM-homomorphism.
  \end{enumerate}
\end{example}

We state below some basic consequences of \GAM-homomorphisms:
\begin{proposition}[Basic properties of \GAM-homomorphism]
  \label{prp:inj-sur}
  Let $\W=\Wfull$ and $\W'=\Wtwofull$ be two \GAM-graphs and
  $\map \pi \W {\W'}$ \aGAM-homomorphism, then:
  \begin{enumerate}
  \item 
    \label{mow-hom.a}
    The map $\map \pi {V(\G)} {V(\G')}$ is surjective.
    
  \item
    \label{mow-hom.b}
    If there exists $c>0$ such that $\m_e=\m'_{e'}=c$ for all
    $e \in E(\G)$ and $e' \in E(\G')$ (e.g.\ if $\W$ has standard or
    combinatorial weights), then the map
    $\map {\ed \pi} {E(\G)} {E(\G')}$ is injective.
  \end{enumerate}
\end{proposition}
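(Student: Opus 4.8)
The plan is to read both statements straight off the two weight inequalities \itemref{hom.mow-graph.c} and \itemref{hom.mow-graph.d} in \Def{hom.gam-graph}, using crucially that all vertex and edge weights take values in $(0,\infty)$. Both parts are short arguments by contradiction.

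For \itemref{mow-hom.a}, I would suppose that $\map{\pi}{V(\G)}{V(\G')}$ is not surjective and pick $v' \in V(\G')$ with empty preimage $\pi^{-1}(v') \cap V(\G) = \emptyset$. Then the sum defining the push-forward vertex measure at $v'$ is empty, so $(\pi_*\m)(v') = 0$, while \itemref{hom.mow-graph.c} forces $(\pi_*\m)(v') \ge \m'(v') > 0$ --- a contradiction. This is precisely the point where the \GAM-condition is genuinely stronger than being a bare graph homomorphism, for which non-surjective examples abound (e.g.\ the inclusion of a proper induced subgraph).

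For \itemref{mow-hom.b}, I would assume $\m_e = \m'_{e'} = c$ for some $c > 0$ and all edges $e \in E(\G)$, $e' \in E(\G')$, and suppose that $\map{\ed\pi}{E(\G)}{E(\G')}$ is not injective, so that $\pi(e_1) = \pi(e_2) =: e'$ for some $e_1 \ne e_2$ in $E(\G)$. Then the sum defining $(\pi_*\m)_{e'}$ already contains the two summands $\m_{e_1} = \m_{e_2} = c$, giving $(\pi_*\m)_{e'} \ge 2c$; but \itemref{hom.mow-graph.d} gives $(\pi_*\m)_{e'} \le \m'_{e'} = c$, so $2c \le c$, contradicting $c > 0$.

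I do not anticipate any real difficulty; the only thing to keep in mind is that for infinite graphs the push-forward sums could be $+\infty$, but this affects neither argument, since an empty sum is still $0$ and a sum containing two equal strictly positive terms is still $\ge 2c$. One should also remember that \itemref{mow-hom.b} is invoked only under its stated hypothesis on the edge weights (satisfied in particular for combinatorial and standard weights, where $c=1$), since without a lower bound on the edge weights $\ed\pi$ need not be injective.
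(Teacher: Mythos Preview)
Your proposal is correct and follows essentially the same approach as the paper's proof: both parts are read off directly from the vertex and edge weight inequalities in \Def{hom.gam-graph} together with the positivity of the weights. The paper phrases the arguments directly (the sum in \itemref{hom.mow-graph.c} is positive hence nonempty; the sum in \itemref{hom.mow-graph.d} with constant summands $c$ bounded by $c$ forces at most one term) rather than by contradiction, but the content is identical.
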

\begin{proof}
  \itemref{mow-hom.a}~Let $v' \in {V(\G')}$, then
  $0 < \m'(v') \le \sum_{v \in \pi^{-1}(v')} \m(v)$, and this implies
  that the sum is not empty, i.e.\ there is $v \in \pi^{-1}(v')$ with
  $\pi(v)=v'$.
	
  \itemref{mow-hom.b}~If the edge weights on $E$ and on $E'$ have
  constant value $c>0$, then
  $\sum_{e \in E, \pi(e)=e'} \m_e \le \m'_{e'}$ is equivalent with the
  fact that $\set{e \in E}{\pi(e)=e'}$ has at most one element, i.e.\ 
  $\ed \pi$ is injective.
\end{proof} 

For the combinatorial and for the standard weights, the
\GAM-homomorphism are characterised by geometrical conditions in the 
following lemmas.

\begin{proposition}[Characterisations of \GAM-homomorphisms for
  standard weights]
  \label{prp:gam-homo.std}
  Consider two \GAM-graphs with standard weights $\W=\Wfulldeg$ and $\W' = (\G',\alpha',\deg)$. Then the following conditions are
  equivalent:
  \begin{enumerate}
  \item 
    \label{gam-homo.std.a}
    There exists \aGAM-homomorphism $\map \pi {\W} {\W'}$.
  \item 
    \label{gam-homo.std.b}
    There exists a measure-preserving \GAM-homomorphism $\map \pi {\W} {\W'}$.
  \item
    \label{gam-homo.std.c}
    There is an equivalence relation $\sim$ on $V(\G)$ such that
    $\G'\simeq\G/{\sim}$ with $\alpha=\alpha'$.
  \end{enumerate}
\end{proposition}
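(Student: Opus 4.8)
The plan is to prove the cycle of implications \itemref{gam-homo.std.c}~$\Rightarrow$~\itemref{gam-homo.std.b}~$\Rightarrow$~\itemref{gam-homo.std.a}~$\Rightarrow$~\itemref{gam-homo.std.c}. Of these, \itemref{gam-homo.std.b}~$\Rightarrow$~\itemref{gam-homo.std.a} is immediate, since by \Def{hom.gam-graph} a measure-preserving \GAM-homomorphism is in particular \aGAM-homomorphism.

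For \itemref{gam-homo.std.c}~$\Rightarrow$~\itemref{gam-homo.std.b}: the hypothesis provides an equivalence relation ${\sim}$ on $V(\G)$ and a graph isomorphism $\map \phi {\G/{\sim}} {\G'}$ with $\alpha = \alpha' \circ \phi$. I would take $\pi := \phi \circ \kappa_{\sim}$, where $\kappa_{\sim}$ is the quotient map of \Exenum{graph-homo}{graph-homo:2}. By \Exenum{gam-homo}{ex:gam-homo2}, $\kappa_{\sim}$ is \aGM-homomorphism for the potentials, and for \emph{standard} weights it is even measure preserving: on edges it is the identity and the edge weight is constantly $1$; on vertices, the sets $(E_v^\G)_{v\in V}$ are pairwise disjoint (each edge has a unique initial vertex), so $E_{[v_0]}^{\G/{\sim}} = \bigcup_{v\sim v_0} E_v^\G$ is a disjoint union and hence $\deg^{\G/{\sim}}([v_0]) = \sum_{v\sim v_0}\deg^\G(v)$, i.e.\ $(\kappa_{\sim})_*\m = \m'$ for the vertex measure. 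Since a graph isomorphism between two graphs with standard weights automatically preserves both weights (it matches the edge sets $E_v$, hence the degrees) and $\phi$ preserves $\alpha$, the map $\phi$ is a measure-preserving \GAM-homomorphism; composing it with $\kappa_{\sim}$ (cf.\ \Exenum{gam-homo}{ex:gam-homo5}, extended to the measure-preserving case by composing push-forwards) yields the required measure-preserving \GAM-homomorphism $\pi$.

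For the substantive implication \itemref{gam-homo.std.a}~$\Rightarrow$~\itemref{gam-homo.std.c}, let $\map \pi \W {\W'}$ be \aGAM-homomorphism. By \Prpenums{inj-sur}{mow-hom.a}{mow-hom.b} (applicable since the edge weights are constantly $1$), $\vx \pi$ is surjective and $\ed \pi$ is injective. The crucial step is to promote $\ed\pi$ to a bijection: if $\ed\pi$ were not surjective, then \Lemenum{graph-aux}{graph-aux.b} would produce a vertex $v_0' \in V'$ with $\sum_{\pi(v)=v_0'}\deg^\G(v) < \deg^{\G'}(v_0')$, which for standard weights flatly contradicts the vertex weight inequality \Defenum{hom.gam-graph}{hom.mow-graph.c}, namely $\sum_{\pi(v)=v_0'}\deg^\G(v) \ge \deg^{\G'}(v_0')$. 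Hence $\map{\ed\pi}{E(\G)}{E(\G')}$ is a bijection. Now define ${\sim}$ on $V(\G)$ by declaring $v_1\sim v_2$ iff $\pi(v_1)=\pi(v_2)$, so that $\pi = \bar\pi \circ \kappa_{\sim}$ for the well-defined map $\map{\bar\pi}{\G/{\sim}}{\G'}$, $\bar\pi([v]):=\pi(v)$ on vertices and $\bar\pi := \ed\pi$ on edges (well-definedness on vertices is exactly the definition of ${\sim}$). Then $\bar\pi$ is a graph homomorphism (immediate from $\bd^{\G/{\sim}}e = ([\bd_-e],[\bd_+e])$ and the fact that $\pi$ is one), it is bijective on vertices (injectivity being the definition of ${\sim}$, surjectivity being that of $\vx\pi$) and on edges, hence — since a bijective graph homomorphism is automatically an isomorphism, its inverse again respecting $\bd$ — a graph isomorphism $\G/{\sim} \simeq \G'$. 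Finally, since $\kappa_{\sim}$ is the identity on edges, $\alpha = \alpha'\circ\pi = \alpha'\circ\bar\pi$ on $E(\G)=E(\G/{\sim})$, so $\bar\pi$ identifies $\alpha$ with $\alpha'$; this is exactly \itemref{gam-homo.std.c}.

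The only point that is not pure bookkeeping is the edge-bijectivity step above, and even that is a direct collision between \Lemenum{graph-aux}{graph-aux.b} and the vertex weight inequality \Defenum{hom.gam-graph}{hom.mow-graph.c}; everything else follows by unwinding the definitions together with \Lem{graph-aux}, \Prp{inj-sur} and \Ex{gam-homo}.
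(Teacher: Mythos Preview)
Your proof is correct and uses essentially the same ingredients as the paper's own argument: \Prp{inj-sur} for vertex surjectivity and edge injectivity, \Lemenum{graph-aux}{graph-aux.b} colliding with the vertex weight inequality \Defenum{hom.gam-graph}{hom.mow-graph.c} to force edge surjectivity, and the factorisation through the quotient $\G/{\sim}$. The only difference is the direction of the cycle: the paper proves \itemref{gam-homo.std.a}$\Rightarrow$\itemref{gam-homo.std.b}$\Rightarrow$\itemref{gam-homo.std.c}$\Rightarrow$\itemref{gam-homo.std.a}, first upgrading an arbitrary \GAM-homomorphism to a measure-preserving one via \Lemenum{graph-aux}{graph-aux.a}, whereas you go \itemref{gam-homo.std.a}$\Rightarrow$\itemref{gam-homo.std.c} directly (observing that the vertex weight \emph{inequality} already suffices to contradict \Lemenum{graph-aux}{graph-aux.b}, without first establishing equality); this is a slight shortcut but the content is the same.
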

\begin{proof}
  \itemref{gam-homo.std.a}$\Rightarrow$\itemref{gam-homo.std.b}.~ By
  \Prp{inj-sur}, the map $\pi$ is injective on edges, hence we have
  equality in~\eqref{hom.mow-graph.d} of \Def{hom.gam-graph}.
  Moreover, from \Lem{graph-aux}~\eqref{graph-aux.a} we conclude also
  equality in \Defenum{hom.gam-graph}{hom.mow-graph.c}.
	
  \itemref{gam-homo.std.b}$\Rightarrow$\itemref{gam-homo.std.c}.~Let
  $\map \pi \W {\W'}$ be \aGAM-homomorphism.  Define a relation by
  $v_1\sim v_2$ if $\pi(v_1)=\pi(v_2)$.  As $\pi$ is surjective by
  \Prpenum{inj-sur}{mow-hom.a}, this defines an equivalence relation
  on $V(\G)$.  Let $\map\Phi{\G/{\sim}}{\G'}$ be given by
  $\Phi([v])=\pi(v)$ and $\Phi (e)=\pi(e)$. It is easy to see that
  $\Phi$ is a graph homomorphism, bijective on vertices and injective
  on edges.  Moreover, the magnetic potentials are preserved.  If
  $\Phi$ was not surjective on the edges, then $\pi$ would not be
  surjective on the edges, hence \Lemenum{graph-aux}{graph-aux.b}
  contradicts the fact that $\pi$ is measure preserving on the
  vertices.  In particular, $\Phi$ is \aGAM-isomorphism, hence
  $\G'\simeq\G/{\sim}$.
	
  \itemref{gam-homo.std.c}$\Rightarrow$\itemref{gam-homo.std.a}.~Let
  $\map{f}{\G'}{\G/{\sim}}$ be an isomorphism and let
  $\map{\kappa}{\G}{\G/{\sim}}$ be the quotient map. It is
  straightforward to show that the composition $\pi=f\circ\kappa$ is
  \aGAM-homomorphism.
\end{proof}

The next lemma states a characterisation of \GAM-homomorphisms for the
combinatorial weight; its proof is similar to the previous lemma.
\begin{proposition}[Characterisations of \GAM-homomorphisms for
  combinatorial weight]
  \label{prp:gam-homo.comb}
  Let $\W=\Wfullcomb$ and $\W'=(\G',\alpha',\com)$, the following are
  equivalent:
  \begin{enumerate}
  \item 
    \label{gam-homo.comb.a}
    There exists \aGAM-homomorphism $\map \pi \W {\W'}$.
  \item 
    \label{gam-homo.comb.b}
    There is an equivalence relation ${\sim}$ on $V(\G)$ and a subset
    $E_0$ such that $\G'-E_0\simeq(\G/{\sim})$ with
    $\alpha\restr{E(\G)}=\alpha'$.
  \end{enumerate}
\end{proposition}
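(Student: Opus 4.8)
The plan is to mimic the proof of \Prp{gam-homo.std}, reducing everything to the combinatorics of graph homomorphisms. First I would record what \aGAM-homomorphism between elements of $\Co$ amounts to: since all vertex and edge weights equal $1$, \DefenumS{hom.gam-graph}{hom.mow-graph.c}{hom.mow-graph.d} say precisely that a map $\map\pi\W{\W'}$ is \aGAM-homomorphism iff $\pi$ is a graph homomorphism with $\alpha=\alpha'\circ\pi$ which is surjective on vertices (the vertex inequality becomes $\card{\pi^{-1}(v')}\ge 1$, cf.\ \Prpenum{inj-sur}{mow-hom.a}) and injective on edges (the edge inequality becomes $\card{\set{e}{\pi(e)=e'}}\le 1$, cf.\ \Prpenum{inj-sur}{mow-hom.b}). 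Everything below is then a matter of translating this characterisation back and forth.

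For \itemref{gam-homo.comb.a}$\Rightarrow$\itemref{gam-homo.comb.b}, given such a $\pi$ I would set $v_1\sim v_2:\Leftrightarrow\pi(v_1)=\pi(v_2)$ and factor $\pi=\Phi\circ\kappa$, where $\map\kappa\G{\G/{\sim}}$ is the quotient map of \Exenum{graph-homo}{graph-homo:2} and $\map\Phi{\G/{\sim}}{\G'}$ is given by $\Phi([v])=\pi(v)$, $\Phi(e)=\pi(e)$. As in \Prp{gam-homo.std}, $\Phi$ is a well-defined graph homomorphism, bijective on vertices (surjectivity of $\pi$) and injective on edges (injectivity of $\pi$ on edges, $\kappa$ being the identity on edges). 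Now put $E_0:=E(\G')\setminus\pi(E(\G))$; then $E(\G'-E_0)=\pi(E(\G))=\Phi(E(\G/{\sim}))$, so $\Phi$ is bijective on vertices and on edges between $\G/{\sim}$ and $\G'-E_0$, and \eqref{eq:graph-homo-prop} together with bijectivity on vertices forces $\Phi^{-1}$ to be a graph homomorphism as well; hence $\G/{\sim}\simeq\G'-E_0$. Finally $\alpha=\alpha'\circ\pi$ and the fact that $\kappa$ is the identity on edges give $\alpha\restr{E(\G)}=(\alpha'\restr{E(\G'-E_0)})\circ\Phi$, which is exactly the intertwining of magnetic potentials asserted in \itemref{gam-homo.comb.b}.

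For \itemref{gam-homo.comb.b}$\Rightarrow$\itemref{gam-homo.comb.a}, I would take a graph isomorphism $\map f{\G'-E_0}{\G/{\sim}}$ with $\alpha'_{e'}=\alpha_{f(e')}$ for all $e'\in E(\G'-E_0)$ (this is the content of the intertwining, combinatorial weights being preserved automatically by any graph isomorphism), let $\map\kappa\G{\G/{\sim}}$ be the quotient map and $\map\iota{\G'-E_0}{\G'}$ the inclusion of \Exenum{graph-homo}{graph-homo:3}, and set $\pi:=\iota\circ f^{-1}\circ\kappa$. This is a composition of graph homomorphisms, hence a graph homomorphism; it is surjective on vertices because $\kappa$ and $f^{-1}$ are and $V(\G'-E_0)=V(\G')$; it is injective on edges because $\kappa$ is the identity on edges, $f^{-1}$ is injective on edges, and $\iota$ is injective; and for $e\in E(\G)$ one has $\pi(e)=f^{-1}(e)$, so $\alpha'_{\pi(e)}=\alpha'_{f^{-1}(e)}=\alpha_e$. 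By the characterisation recalled in the first paragraph, $\pi$ is \aGAM-homomorphism.

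I do not expect a genuine obstacle here beyond what already appears in \Prp{gam-homo.std}; the single point needing attention is the choice $E_0=E(\G')\setminus\pi(E(\G))$, which is precisely what makes $\Phi$ surjective — and hence an isomorphism — onto $\G'-E_0$ rather than merely a graph homomorphism into $\G'$. This is the combinatorial analogue of the ``measure-preserving'' step in the standard-weight case, except that here surjectivity on edges must be \emph{arranged} (by deleting the unused edges $E_0$) instead of being forced by \Lemenum{graph-aux}{graph-aux.b}.
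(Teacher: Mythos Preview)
Your proof is correct and follows exactly the approach the paper indicates: the paper does not spell out a proof here but simply states that ``its proof is similar to the previous lemma'' (i.e.\ \Prp{gam-homo.std}), and your argument is precisely that adaptation, with the one new ingredient you correctly identify---choosing $E_0=E(\G')\setminus\pi(E(\G))$ to force surjectivity of $\Phi$ onto $\G'-E_0$ rather than deducing it from \Lemenum{graph-aux}{graph-aux.b}.
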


\begin{definition}[Geometric (pre)order of \GAM-graphs]
  \label{def:relation1}
  Let $\W$ and $\W'$ be two \GAM-graph.  If there exists
  $\map \pi {\W} {\W'}$ \aGAM-homomorphism, we write $\W\lesse\W'$.
\end{definition}
Note that, by definition, $\lesse$ is invariant under
\GAM-isomorphism.
\begin{proposition}
  \label{prp:relation1}
  The relation $\lesse$ is a preorder on $\Ga$, i.e.\ it is reflexive
  and transitive. Moreover, for finite \GAM-graphs $\lesse$ is a
  partial order on the equivalence classes of \GAM-isomorphic graphs
  in $\De$ and in $\Co$ (or on any subclass of weighted graphs with
  constant edge weight).
\end{proposition}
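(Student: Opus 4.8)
�The plan is to verify the three properties—reflexivity, transitivity, and antisymmetry on the relevant subclasses—separately.

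\textbf{Reflexivity and transitivity.}
First I would note that for any $\W\in\Ga$ the identity map $\id_\G$ is an \GAM-homomorphism from $\W$ to itself: it is trivially a graph homomorphism, it preserves the magnetic potential, and the weight inequalities in~\eqref{hom.mow-graph.c} and~\eqref{hom.mow-graph.d} become equalities. Hence $\W\lesse\W$. For transitivity, if $\W\lesse\W'$ and $\W'\lesse\W''$, pick \GAM-homomorphisms $\pi\colon\W\to\W'$ and $\tau\colon\W'\to\W''$; the composition $\tau\circ\pi$ is a graph homomorphism and preserves the magnetic potential, so one only needs to check the two weight inequalities for the composite push-forward. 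This is a short computation: grouping the fibres of $\tau\circ\pi$ over a vertex $v''$ according to the intermediate vertices $v'\in(\tau)^{-1}(v'')$ gives $((\tau\circ\pi)_*\m)(v'')=\sum_{v'\in\tau^{-1}(v'')}(\pi_*\m)(v')\ge\sum_{v'\in\tau^{-1}(v'')}\m'(v')\ge\m''(v'')$, and the analogous chain with the inequalities reversed handles the edge weights. This is essentially the content of \Exenum{gam-homo}{ex:gam-homo5}, so transitivity follows. Thus $\lesse$ is a preorder on $\Ga$.

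\textbf{Partial order on $\De$ and on $\Co$.}
Since $\lesse$ is already a preorder, it descends to a partial order on the quotient by the equivalence relation ``$\W\lesse\W'$ and $\W'\lesse\W$'' provided we show that, for finite \GAM-graphs with constant edge weight, this two-sided relation forces $\W\simeq\W'$ as \GAM-graphs. So assume $\W,\W'\in\De$ (or both in $\Co$, or more generally both with the same constant edge weight $c>0$) are finite, and that there are \GAM-homomorphisms $\pi\colon\W\to\W'$ and $\sigma\colon\W'\to\W$. By \Prpenum{inj-sur}{mow-hom.a} both $\pi$ and $\sigma$ are surjective on vertices, and since the graphs are finite this forces $\card{V(\G)}=\card{V(\G')}$ and both vertex maps to be bijections. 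By \Prpenum{inj-sur}{mow-hom.b} both $\pi$ and $\sigma$ are injective on edges; combined with \Lemenum{graph-aux}{graph-aux.b}, if $\pi$ failed to be surjective on edges there would be a vertex $v'_0$ with $\sum_{\pi(v)=v'_0}\deg^\G(v)<\deg^{\G'}(v'_0)$, i.e.\ (using bijectivity on vertices) $\deg^\G(\pi^{-1}(v'_0))<\deg^{\G'}(v'_0)$; but applying the same argument to $\sigma$ and summing over all vertices gives $\sum_v\deg^\G(v)\le\sum_{v'}\deg^{\G'}(v')\le\sum_v\deg^\G(v)$, so all these inequalities are equalities and both $\pi$ and $\sigma$ are bijective on edges as well. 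Hence $\pi$ is a graph isomorphism preserving the magnetic potential; for standard weights the vertex bijection automatically matches $\deg^{\G}$ with $\deg^{\G'}$ so $\m=\m'\circ\pi$, and for combinatorial weights $\m\equiv1\equiv\m'$, so in either case $\pi$ is an \GAM-isomorphism and $\W\simeq\W'$. Therefore $\lesse$ is antisymmetric on the set of \GAM-isomorphism classes in $\De$ and in $\Co$ (and on any finite subclass with a fixed constant edge weight).

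\textbf{Expected main obstacle.}
The delicate point is not the abstract preorder bookkeeping but ruling out the possibility of a ``strictly shrinking'' cycle $\W\lesse\W'\lesse\W$ with $\W\not\simeq\W'$; concretely, one must forbid $\pi$ from being injective-but-not-surjective on edges. This is exactly where finiteness and the constant-edge-weight hypothesis enter, via \Lem{graph-aux} together with a global degree-counting (handshake-type) argument as above. I would be careful to phrase this counting so that it simultaneously uses both $\pi$ and $\sigma$; using only one of them gives an inequality in the wrong direction. One should also double-check the vertex-weight side: for non-standard, non-combinatorial weights with constant edge weight the statement still holds because \Prpenum{inj-sur}{mow-hom.b} and the vertex-counting go through verbatim, but the identification $\m=\m'\circ\pi$ needs the weights to be determined by the graph, which is why the sharp statement is restricted to $\De$ and $\Co$ (the parenthetical ``constant edge weight'' clause giving antisymmetry of the underlying graph-with-potential structure).
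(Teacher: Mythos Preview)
Your proof is correct and follows essentially the same route as the paper: identity for reflexivity, composition (\Exenum{gam-homo}{ex:gam-homo5}) for transitivity, and \Prp{inj-sur} plus finiteness for antisymmetry. The one place you work harder than needed is edge bijectivity: the paper simply notes that the two edge-injective maps $\pi$ and $\sigma$ between finite edge sets force $\card E=\card{E'}$, hence both are bijections, whereas your degree-sum argument via \Lem{graph-aux} is correct but just a handshake-lemma rephrasing of the same cardinality count (since $\sum_v\deg(v)=2\card E$).
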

\begin{proof}
  For the reflexivity of $\lesse$ use $\pi=\id_G$ as
  \GAM-homomorphism.  The transitivity follows from
  \Exenum{gam-homo}{ex:gam-homo5}, hence $\lesse$ is a preorder on
  $\Ga$.
	
  Consider two finite \GAM-graphs $\W,\W'\in\Co$ with
  $\map{\pi}{\W}{\W'}$ and $\map{\pi'}{\W'}{\W}$, then by
  \Prpenum{inj-sur}{mow-hom.a} that both $\pi$ and $\pi'$ are
  surjective on the vertex sets.  Since both sets are finite, it
  follows that $\pi$ and $\pi'$ are bijective on the vertex sets.
  Similarly, from \Prpenum{inj-sur}{mow-hom.b} it follows that $\pi$
  and $\pi'$ are bijective on the edge sets: In particular, $\W$ and
  $\W'$ are \GAM-isomorphic, and hence $\lesse$ is antisymmetric
  (i.e.\ a partial order) on the equivalence classes of
  \GAM-isomorphic graphs from $\De$ or $\Co$.
\end{proof}
The preceding result remains true on any subclass $\Ga'$ of $\Ga$ with
edge weight given by a common constant.

\begin{remark}
  On \emph{infinite} \GAM-graphs, $\lesse$ is in general \emph{not} a
  partial order on $\De$ or $\Co$, as for \emph{infinite} \GAM-graphs,
  the antisymmetry may fail: Consider e.g.\ $\W=(\G,0,\deg)$ and
  $\W'=(\G',0,\deg)$, where $\G$ and $\G'$ are given in \Fig{counter}.
  It is easy to see that $\G'\simeq \G/\{u,v\}$ and
  $\G\simeq \G'/\{u',v'\}$, therefore $\W\lesse \W'$ and
  $\W'\lesse \W$ by \Prpenum{gam-homo.std}{gam-homo.std.c}.

  Nevertheless, $\W \not\simeq \W'$ because $\G$ and $\G'$ are not
  isomorphic as graphs.  In particular, $\lesse$ is not antisymmetric
  for infinite graphs for standard weights.  A similar argument works
  for combinatorial weights.
  \begin{figure}[h]
    \centering \subfloat[{The infinite graph
      $\G$.}\label{subfig:counter1}]{ \qquad \begin{tikzpicture}[auto,
        vertex/.style={circle,draw=black!100,fill=black!100, thick,
          inner sep=0pt,minimum size=1mm},scale=4]
        \node (C) at (-.55,0) [vertex,inner sep=.25pt,minimum size=.25pt,label=above:]{};
        \node (C) at (-.5,0) [vertex,inner sep=.25pt,minimum size=.25pt,label=above:]{};
        \node (C) at (-.45,0) [vertex,inner sep=.25pt,minimum size=.25pt,label=above:]{};                              
        \node (X2) at (-.4,0) [vertex,label=below:]{};
        \node (X3) at (-.2,0) [vertex,,label=below:]{};
        \node (X4) at (0,0) [vertex,label=below:]{};
        \node (X5) at (.2,0) [vertex,label=below:]{};
        \node (X6) at (.4,0) [vertex,label=below:$u$]{};   
        \node (X7) at (.6,0) [vertex,label=below:]{};
        \node (X8) at (.8,0) [vertex,label=below:]{};
        \node (X9) at (1,0) [vertex,label=below:]{};   
        \draw[-] (X2) to[] node[above] {} (X3);                              	                
        \draw[-] (X3) to[] node[above] {} (X4);  
        \draw[-]  (X4) to[] node[above] {} (X5); 
        \draw[-]  (X5) to[] node[above] {} (X6); 
        \draw[-]  (X6) to[] node[above] {} (X7);  
        \draw[-]  (X7) to[] node[above] {} (X8); 
        \draw[-]  (X8) to[] node[above] {} (X9);     
        \node (Y0) at (-.4,.2) [vertex,label=above:]{};    
       \draw[-]  (X2) to[] node[above] {} (Y0); 	   	
        \node (Y1) at (-.2,.2) [vertex,label=above:]{};    
       \draw[-] (X3) to[] node[above] {} (Y1); 
        \node (Y2) at (0,.2) [vertex,label=above:]{};    
        \draw[-]  (X4) to[] node[above] {} (Y2);
        \draw[-]  (X5) to [out=135,in=40,looseness=30] (X5);
        \node (Y3) at (.4,.2) [vertex,label=above:$v$]{};    
        \draw[-]  (X6) to[] node[above] {} (Y3);  
        \draw[-]  (X7) to [out=135,in=40,looseness=30] (X7);	
        \draw[-]  (X8) to [out=135,in=40,looseness=30] (X8);			
        \draw[-] (X9) to [out=135,in=40,looseness=30] (X9);		
        
        \node (C) at (1.05,0) [vertex,inner sep=.25pt,minimum size=.25pt,label=above:]{};
        \node (C) at (1.1,0) [vertex,inner sep=.25pt,minimum size=.25pt,label=above:]{};
        \node (C) at (1.15,0) [vertex,inner sep=.25pt,minimum size=.25pt,label=above:]{};                               
      \end{tikzpicture}\qquad}                
    \subfloat[{The infinite graph $\G'$.}\label{subfig:counter2}]{  \qquad  \begin{tikzpicture}[auto,
	vertex/.style={circle,draw=black!100,fill=black!100, thick,
          inner sep=0pt,minimum size=1mm},scale=4]
	\node (C) at (-.55,0) [vertex,inner sep=.25pt,minimum size=.25pt,label=above:]{};
	\node (C) at (-.5,0) [vertex,inner sep=.25pt,minimum size=.25pt,label=above:]{};
	\node (C) at (-.45,0) [vertex,inner sep=.25pt,minimum size=.25pt,label=above:]{};                              
	\node (X2) at (-.4,0) [vertex,label=below:]{};
	\node (X3) at (-.2,0) [vertex,,label=below:]{};
	\node (X4) at (0,0) [vertex,label=below:]{};
	\node (X5) at (.2,0) [vertex,label=below:$u'$]{};
	\node (X6) at (.4,0) [vertex,label=below:]{};   
	\node (X7) at (.6,0) [vertex,label=below:]{};
	\node (X8) at (.8,0) [vertex,label=below:]{};
	\node (X9) at (1,0) [vertex,label=below:]{};   
	\draw[-] (X2) to[] node[above] {} (X3);                              	                
	\draw[-] (X3) to[] node[above] {} (X4);  
	\draw[-] (X4) to[] node[above] {} (X5); 
	\draw[-] (X5) to[] node[above] {} (X6); 
	\draw[-] (X6) to[] node[above] {} (X7);  
	\draw[-] (X7) to[] node[above] {} (X8); 
	\draw[-] (X8) to[] node[above] {} (X9);     
	\node (Y0) at (-.4,.2) [vertex,label=above:]{};    
	\draw[-] (X2) to[] node[above] {} (Y0); 	   	
	\node (Y1) at (-.2,.2) [vertex,label=above:]{};    
	\draw[-] (X3) to[] node[above] {} (Y1); 
	\node (Y2) at (0,.2) [vertex,label=above:]{};    
	\draw[-] (X4) to[] node[above] {} (Y2);
	\node (Ya) at (.2,.2) [vertex,label=above:$v'$]{};    
    \draw[-] (X5) to[] node[above] {} (Ya);	
	\node (Y3) at (.4,.2) [vertex,label=above:]{};    
	\draw[-] (X6) to[] node[above] {} (Y3);  
	\draw[-] (X7) to [out=135,in=40,looseness=30] (X7);	
	\draw[-] (X8) to [out=135,in=40,looseness=30] (X8);			
	\draw[-] (X9) to [out=135,in=40,looseness=30] (X9);		
	
	\node (C) at (1.05,0) [vertex,inner sep=.25pt,minimum size=.25pt,label=above:]{};
	\node (C) at (1.1,0) [vertex,inner sep=.25pt,minimum size=.25pt,label=above:]{};
	\node (C) at (1.15,0) [vertex,inner sep=.25pt,minimum size=.25pt,label=above:]{};                               
      \end{tikzpicture}\qquad}                	
    \caption{}
    \label{fig:counter}
  \end{figure}
\end{remark}

%
\section{Magnetic Laplacians and spectral preorder}
\label{sec:mag.lap.spec.ord}
%

In this section, we will introduce the discrete magnetic Laplacian
associated to an \GAM-graph and present a new spectral relation
between the magnetic Laplacian associated to different graphs.

\subsection{Discrete magnetic exterior derivatives and Laplacians}
\label{sec:disc.mag.lap}

We define some standard spaces related with a weighted graph $(G,\m)$, namely for $p \in [1,\infty)$ we set
are
\begin{subequations}
  \label{eq:hs}
  \begin{align}
    \label{eq:hs.0}
     \lp {V,\m}
    &:= \Bigset{ \map \phi { V} \C}
          { \norm[V,\m] \phi^p := 
            \sum_{v \in  V} \abs{\phi(v)}^p \m(v) < \infty},\\
          \label{eq:hs.1}
         \lp {E,\m}
          &:= \Bigset{ \map \eta E \C}
            {\forall e \in E\colon \eta_{\bar e}=-\eta_e,\;
            \norm[E,\m] \eta^p := 
                  \frac12\sum_{e \in E} \abs{\eta_e}^p \m_e < \infty},
  \end{align}
\end{subequations}
For $p=2$, both spaces are
Hilbert spaces.  Note that functions on $V$ can be interpreted as
$0$-forms and functions on $E$ are considered as $1$-forms. We will
denote the corresponding canonical orthonormal basis by
$\set{\delta_v}{v\in V}$ (respectively, $\set{\delta_e}{e\in E}$) with
$\delta_v(u)=\m(v)^{-1/2}$ if $v=u$ and $\delta_v(u)=0$ otherwise
(respectively, $\delta_e(e')=w_e^{-1/2}$ if $e=e'$ and
$\delta_e(e')=0$ otherwise). If $\m=\1$, we sometimes simply write
$\lsqr{V,\1}=\lsqr V$ and $\lsqr{E,\1}=\lsqr E$.

Let $\W=\Wfull$ be \aGAM-graph.  The \emph{(discrete) magnetic
  exterior derivative} $d_\alpha$ is defined as
\begin{equation}
  \label{eq:d.alpha}
  \map {d_\alpha} 
  {\lsqr{V,\m}}{\lsqr{E,\m}}, \qquad
  (d_\alpha \phi)_e 
  = \e^{\im \alpha_e/2} \phi(\bd_+ e) 
  - \e^{- \im \alpha_e/2} \phi(\bd_-e).
\end{equation}
It is not hard to see that $d_\alpha$ is a bounded operator if the
relative weight $\varrho_\infty$ defined in~\Eq{eq:rho.bdd} is
bounded (actually, the boundedness of $d_\alpha$ is equivalent with
the boundedness of $\varrho_\infty$). In~\cite{mathai-yates:02},
$d_\alpha$ is called a \emph{coboundary operator} for a \emph{twisted
  complex}. In particular, if $\alpha = 0$, then $d_\alpha$ is the usual
coboundary operator.

The adjoint $\map{d_\alpha}{\lsqr{E,\m}}{\lsqr{V,\m}}$ is given by
\begin{equation*}
  (d_\alpha^* \eta)(v)
  = - \frac 1 {\m(v)} \sum_{e \in E_v} 
      \m_e \e^{\im \alpha_e/2}\eta_e\;.
\end{equation*}

\begin{definition}[Discrete magnetic weighted Laplacian]
  \label{def:dml}
  Let $\W$ be \aGAM-graph, the \emph{(discrete) magnetic (weighted)
    Laplacian} is defined as
  \begin{equation}
    \label{eq:def.disc.lapl}
    \map{\Delta_\alpha := d_\alpha^* d_\alpha} 
    {\lsqr { V,\m}} {\lsqr { V,\m}}.
  \end{equation}
\end{definition}
The discrete magnetic weighted Laplacian acts as
\begin{equation}
  \label{eq:disc.lapl}
  (\Delta_\alpha \phi)(v) 
  =  \frac 1 {\m(v)} \sum_{e \in E_v} 
       \m_e \bigl(\phi(v)-\e^{\im \alpha_e} \phi(\bd_+e)\bigr)
  = \varrho_\m(v) \phi(v) 
  - \frac 1 {\m(v)} \sum_{e \in E_v} 
       \m_e \e^{\im \alpha_e} \phi(\bd_+e),
\end{equation}
where $\varrho_\m$ is the relative weight defined in~\Eq{eq:rho}.
\begin{example}[Special cases of magnetic weighted Laplacians]
  \label{ex:lapl.weights}
  \indent
  \begin{enumerate}
  \item
    \label{lapl.comb}
    Let $\W=\Wfull\in\Ga$ be \aGAM-graph (respectively, $\W\in\De$,
    $\W\in\Co$), then $\Delta_\alpha$ is the magnetic weighted
    (respectively, standard, combinatorial) Laplacian.
  \item
    \label{lapl.std}
    Let $\W=(\G,0,\m)\in\Ga^0$ be \aGAM-graph (respectively,
    $\W\in\De^0$, $\W\in\Co^0$), then $\Delta_0$ is the weighted
    (respectively, standard, combinatorial) Laplacian.
  \item
    \label{lapl.signless}
    Let $\W=(\G,\pi,\m)\in\Ga^\pi$ be \aGAM-graph (respectively,
    $\W\in\De^\pi$, $\W\in\Co^\pi$), then $\Delta_\pi$ is the weighted
    (respectively, standard, combinatorial) \emph{signless} Laplacian.
  \end{enumerate}
\end{example}

By construction, the magnetic Laplacian is a bounded, non-negative
(hence self-adjoint) operator.  Moreover, its spectrum is contained in
the interval $\sigma(\Delta_\alpha) \subset [0,2\varrho_\infty]$. %
Let $\wt \alpha$ and $\alpha$ be two cohomologous (gauge-equivalent)
magnetic potentials for some gauge $\xi \in C^0(G)$ (i.e.\
$\map \xi V {A=\RmodZ}$ with $\wt \alpha = \alpha + \de \xi$).  Then
the gauge $\xi$ induces two unitary (multiplication) operators $\Xi^0$
and $\Xi^1$ on $\lsqr{V,\m}$ and $\lsqr {E,\m}$, respectively, defined
by
\begin{equation}
  \label{eq:unit.0.1}
  (\Xi^0 \phi)(v) := \e^{\im \xi(v)} \phi(v)
  \quadtext{and}
  (\Xi^1 \eta)_e := \e^{\im(\xi(\bd_+ e) + \xi(\bd_- e))/2} \eta_e.
\end{equation}
Here, $\xi \mapsto \Xi^0$ and $\xi \mapsto \Xi^1$ are unitary
representations of $\xi \in C^0(G)$ seen as an additive group on
$\lsqr{V,\m}$ and $\lsqr{E,\m}$.

We will now show that magnetic Laplacians with gauge-equivalent
magnetic potentials are unitarily equivalent:
\begin{proposition}
  \label{prp:sunada.weight}
  If $\alpha\sim \wt\alpha$ (or, more precisely, if
  $\wt \alpha=\alpha+\de \xi$), then
  \begin{equation*}
    d_\alpha \Xi^0 = \Xi^1 d_{\wt\alpha}
    \qquadtext{and}
    \laplacian \alpha \Xi^0= \Xi^0\laplacian {\wt \alpha}.
  \end{equation*}
  In particular, $\laplacian {\alpha}$ and $\laplacian {\wt \alpha}$ are
  unitarily equivalent, and the spectral properties of $\laplacian
  \alpha$ depend only on the \GAM-graph class $[\alpha]$.
\end{proposition}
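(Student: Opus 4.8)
The plan is to prove both intertwining identities by a direct computation from the definitions~\eqref{eq:d.alpha} and~\eqref{eq:unit.0.1}, the only algebraic input being the cocycle relation $\wt\alpha_e = \alpha_e + (\de\xi)_e = \alpha_e + \xi(\bd_+e) - \xi(\bd_-e)$ for all $e \in E$. First I would record that $\Xi^0$ and $\Xi^1$ are indeed well-defined unitaries: they are multiplication operators by functions of modulus one, and since $\bd_\pm\bar e = \bd_\mp e$, the phase $\e^{\im(\xi(\bd_+e)+\xi(\bd_-e))/2}$ attached to $\Xi^1$ is invariant under $e\mapsto\bar e$, so $\Xi^1$ preserves the antisymmetry $\eta_{\bar e}=-\eta_e$ and hence maps $\lsqr{E,\m}$ to itself.

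The key computation is the first identity $d_\alpha\Xi^0 = \Xi^1 d_{\wt\alpha}$. Evaluating the left-hand side on $\phi\in\lsqr{V,\m}$ at an edge $e$ gives $\e^{\im\alpha_e/2}\e^{\im\xi(\bd_+e)}\phi(\bd_+e) - \e^{-\im\alpha_e/2}\e^{\im\xi(\bd_-e)}\phi(\bd_-e)$. For the right-hand side, one pushes the scalar factor $\e^{\im(\xi(\bd_+e)+\xi(\bd_-e))/2}$ of $\Xi^1$ into the two summands of $(d_{\wt\alpha}\phi)_e$ and uses $\tfrac12\wt\alpha_e = \tfrac12\alpha_e + \tfrac12(\xi(\bd_+e)-\xi(\bd_-e))$ to see that $\e^{\im(\xi(\bd_+e)+\xi(\bd_-e))/2}\,\e^{\pm\im\wt\alpha_e/2}$ collapses exactly to $\e^{\pm\im\alpha_e/2}\,\e^{\im\xi(\bd_\pm e)}$; the two expressions then agree term by term.

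Taking Hilbert-space adjoints of $d_\alpha\Xi^0 = \Xi^1 d_{\wt\alpha}$ and using that $\Xi^0$ and $\Xi^1$ are unitary yields $d_\alpha^*\Xi^1 = \Xi^0 d_{\wt\alpha}^*$. Composing the two relations gives $\laplacian\alpha\Xi^0 = d_\alpha^* d_\alpha \Xi^0 = d_\alpha^*\Xi^1 d_{\wt\alpha} = \Xi^0 d_{\wt\alpha}^* d_{\wt\alpha} = \Xi^0\laplacian{\wt\alpha}$, so $\laplacian\alpha = \Xi^0\laplacian{\wt\alpha}(\Xi^0)^{-1}$ and the two Laplacians are unitarily equivalent. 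Since any two representatives of a cohomology class $[\alpha]$ differ by an exact cochain $\de\xi$, all of them give unitarily equivalent magnetic Laplacians, so the spectrum together with all its refinements (essential and discrete spectrum, multiplicities) depends only on $[\alpha]$. The argument is entirely routine; the only points requiring a little care are the bookkeeping of the half-angle phases and the verification that $\Xi^1$ descends to the antisymmetric space $\lsqr{E,\m}$.
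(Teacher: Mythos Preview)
Your proof is correct and follows essentially the same route as the paper: a direct computation verifies $d_\alpha\Xi^0=\Xi^1 d_{\wt\alpha}$ using $\wt\alpha_e=\alpha_e+\xi(\bd_+e)-\xi(\bd_-e)$, and the Laplacian intertwining then follows from unitarity of $\Xi^0$ and $\Xi^1$. You add a couple of useful details the paper omits (the check that $\Xi^1$ preserves the antisymmetric edge space, and the explicit adjoint step $d_\alpha^*\Xi^1=\Xi^0 d_{\wt\alpha}^*$), but the argument is the same.
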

\begin{proof}
  The first equation follows by a straightforward calculation, namely
  \begin{align*}
    (d_\alpha \Xi^0 \phi)_e
    & = \e^{\im \alpha_e/2+\im \xi(\bd_+e)}\phi(\bd_+e)
      - \e^{-\im \alpha_e/2+\im \xi(\bd_-e)}\phi(\bd_-e)\\
    &=  \e^{\im (\xi(\bd_+e) + \xi(\bd_-e))/2}
      \bigl(
          \e^{\im \wt \alpha_e/2}\phi(\bd_+e)
        - \e^{-\im \wt \alpha_e/2}\phi(\bd_-e)
      \bigr) = (\Xi^1 d_{\wt \alpha} \phi)_e.
  \end{align*}
  The second intertwining relation follows from the first one and the
  fact that $\Xi^0$ and $\Xi^1$ are unitary.
\end{proof}

Now, we will prove some results related with the spectrum of the
magnetic Laplacian that will be useful in the next sections.  The
first result says that a magnetic potential increases the smallest
eigenvalue.
\begin{lemma}
  Let $\W=\Wfull$ be \aGAM-graph, such that the underlying discrete
  graph $\G$ is connected and finite.  Then
  $0\in\sigma(\laplacian {\alpha})$ if and only if $\alpha$ is
  trivial, i.e.\ cohomologous to $0$.
\end{lemma}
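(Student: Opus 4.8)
The plan is to characterize $0 \in \sigma(\laplacian{\alpha})$ via the kernel of $d_\alpha$, since $\laplacian{\alpha} = d_\alpha^* d_\alpha$ is non-negative and acts on a finite-dimensional space, so $0$ is an eigenvalue precisely when $\ker d_\alpha \neq \{0\}$. Thus the statement reduces to showing that $d_\alpha$ has a nontrivial kernel if and only if $\alpha$ is cohomologous to $0$.

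For the direction assuming $\alpha$ is trivial: if $\alpha \sim 0$, then by \Prp{sunada.weight} (applied with $\wt\alpha = 0$ and the gauge $\xi$ satisfying $\alpha = \de\xi$, up to a sign convention) the Laplacian $\laplacian{\alpha}$ is unitarily equivalent to $\laplacian{0}$, the ordinary weighted Laplacian. For the latter, the constant function $\mathbbm{1}$ (or any function constant on $V$) lies in $\ker d_0$ since $(d_0\phi)_e = \phi(\bd_+e) - \phi(\bd_-e) = 0$ when $\phi$ is constant; hence $0 \in \sigma(\laplacian{0}) = \sigma(\laplacian{\alpha})$. Note that here connectedness is not even needed for this direction.

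For the converse, suppose $0 \in \sigma(\laplacian{\alpha})$, so there is $\phi \neq 0$ with $d_\alpha\phi = 0$, i.e.\ $\e^{\im\alpha_e/2}\phi(\bd_+e) = \e^{-\im\alpha_e/2}\phi(\bd_-e)$ for every $e \in E$. First I would observe that $|\phi|$ is constant on $V$: the identity gives $|\phi(\bd_+e)| = |\phi(\bd_-e)|$ for all edges, and since $\G$ is connected, $|\phi|$ takes a single value, which is nonzero because $\phi \not\equiv 0$. After rescaling, assume $|\phi(v)| = 1$ for all $v$, so $\phi(v) = \e^{\im\xi(v)}$ for some $\xi \in C^0(G)$ (using that $R$-valued or $\RmodZ$-valued phases suffice; here we work in $\RmodZ$). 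Substituting into the kernel equation yields $\e^{\im(\alpha_e/2 + \xi(\bd_+e))} = \e^{\im(-\alpha_e/2 + \xi(\bd_-e))}$, hence $\alpha_e = \xi(\bd_-e) - \xi(\bd_+e) = -(\de\xi)_e$ in $\RmodZ$ for every edge. Therefore $\alpha = \de(-\xi)$ is exact, i.e.\ $\alpha$ is cohomologous to $0$.

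The main subtlety I anticipate is bookkeeping with the group $R$: the natural gauge $\xi$ produced by the kernel function is an $\RmodZ$-valued (i.e.\ phase-valued) function, and one must check this is consistent with the definition of "trivial" (cohomologous to $0$), which is formulated for $C^0(G,R)$ with $R$ the chosen subgroup. Since the magnetic potential $\alpha$ takes values in $R$ and the equation $\alpha_e = -(\de\xi)_e$ forces the increments of $\xi$ to lie in $R$, one can, starting from a spanning tree and propagating, choose $\xi$ itself $R$-valued; this is the only place where a little care is required, and it is exactly the standard argument that a flux-free potential on a connected graph is a gradient. Everything else is a direct computation.
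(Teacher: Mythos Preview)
Your proposal is correct and follows essentially the same route as the paper's proof: both directions are handled identically (unitary equivalence via \Prp{sunada.weight} for ``$\Leftarrow$''; for ``$\Rightarrow$'', a nonzero $\phi \in \ker d_\alpha$ has constant modulus by connectedness, and its phase $\xi$ serves as the gauge exhibiting $\alpha$ as exact). If anything, you are slightly more careful than the paper in flagging and resolving the point that the gauge $\xi$ produced this way is a priori $\RmodZ$-valued, whereas triviality is defined via $C^0(G,R)$; your spanning-tree propagation argument to obtain an $R$-valued $\xi$ is the right fix, and the paper glosses over this.
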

\begin{proof}
  ``$\Rightarrow$'': Suppose that $0\in\sigma(\laplacian \alpha)$,
  then there exists a non-zero $\phi \in \lsqr {V,\m} $ such that
  $0=\iprod {\Delta_\alpha \phi}\phi=\normsqr{d_\alpha \phi}$.  In
  particular, $d_\alpha \phi=0,$ i.e.\ 
  $\phi(\bd_-e)=\e^{\im \alpha_e}\phi(\bd_-e)$ for all $e \in E$.  As
  the graph $\G$ is connected we can define a function
  $\xi\colon V \rightarrow A$ adjusting the phases in such a way that
  $\phi(v)\e^{\im \xi(v)}$ is constant on $V$.  We then have
  $\alpha_e=(\de \xi)_e$ ($e\in E$), hence $\alpha\sim 0$.

  ``$\Leftarrow$'': If $\alpha \sim 0$, then by \Prp{sunada.weight},
  $\Delta_\alpha$ is unitarily equivalent with the Laplacian
  $\Delta_0$ without magnetic potential.  For the latter, a constant
  function is an eigenfunction with eigenvalue $0$, hence
  $0\in\sigma(\Delta_\alpha)$.
\end{proof}

We conclude this section recalling some variations of the well-known
variational characterisation of the eigenvalues (min-max principle):

\begin{theorem}[Courant-Fischer]
  \label{thm:courant}
  Let $\HS$ be an $n$-dimensional (complex) Hilbert space and
  $\map A \HS\HS$ linear and $A^*=A$.  Moreover, denote by
  $\lambda_1\leq \lambda_2 \leq \dots \leq \lambda_n$ be the
  eigenvalues of $A$ written in ascending order and repeated according
  to their multiplicities. Let $k\in\{1,2,\dots, n\}$, then
  \begin{equation}
    \label{eq:courant1}
    \lambda_k
    =\min_{S \in \mathcal S_{n-k}}\max_{\substack{\phi\perp S \\ \phi\neq 0}}
    \dfrac{\iprod{A \phi} \phi}{\iprod{\phi}\phi}
    \qquadtext{and}
    \lambda_k
    =\max_{S \in \mathcal S_{k-1}} \min_{\substack{\phi\perp S \\ \phi\neq 0}}
    \dfrac{\iprod{A \phi} \phi}{\iprod{\phi}\phi},
  \end{equation}
  and
  \begin{equation}
    \label{eq:courant2}
    \lambda_k
    =\min_{S \in \mathcal S_k} \max_{\substack{\phi \in S \\ \phi\neq 0}}
    \dfrac{\iprod{A \phi} \phi}{\iprod{\phi}\phi}
    \qquadtext{and}
    \lambda_k
    =\max_{S \in \mathcal S_{n-k+1}} \min_{\substack{\phi \in S \\ \phi\neq 0}}
    \dfrac{\iprod{A \phi} \phi}{\iprod{\phi}\phi},
  \end{equation}
  where $\mathcal S_k$ denotes the set of $k$-dimensional subspaces of
  $\HS$.
\end{theorem}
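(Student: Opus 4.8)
The plan is to reduce everything to the spectral theorem. Fix an orthonormal basis $e_1,\dots,e_n$ of $\HS$ consisting of eigenvectors, $Ae_j=\lambda_je_j$, ordered so that $\lambda_1\le\dots\le\lambda_n$, and write $R(\phi):=\iprod{A\phi}{\phi}/\iprod{\phi}{\phi}$ for the Rayleigh quotient of $\phi\ne 0$. The only analytic input is the pair of elementary inequalities: if $\phi=\sum_{j=1}^n c_je_j$ lies in $\operatorname{span}(e_1,\dots,e_k)$ then $R(\phi)=(\sum_{j\le k}\lambda_j\abs{c_j}^2)/(\sum_{j\le k}\abs{c_j}^2)\le\lambda_k$, while if $\phi\in\operatorname{span}(e_k,\dots,e_n)$ then $R(\phi)\ge\lambda_k$; in both cases equality holds at $\phi=e_k$. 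I would establish these first, together with the one combinatorial fact used repeatedly below: two subspaces of $\HS$ whose dimensions add up to more than $n$ intersect non-trivially.

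Next I would prove \eqref{eq:courant2}. For $\lambda_k=\min_{S\in\mathcal S_k}\max_{\phi\in S,\phi\ne0}R(\phi)$: testing with $S=\operatorname{span}(e_1,\dots,e_k)$ gives ``$\le$'' by the first inequality (with equality at $e_k\in S$), and for ``$\ge$'' note that an arbitrary $S\in\mathcal S_k$ meets $\operatorname{span}(e_k,\dots,e_n)$, of dimension $n-k+1$, non-trivially since $k+(n-k+1)>n$; any non-zero $\phi$ in the intersection satisfies $R(\phi)\ge\lambda_k$, so $\max_{\psi\in S}R(\psi)\ge\lambda_k$. The second identity $\lambda_k=\max_{S\in\mathcal S_{n-k+1}}\min_{\phi\in S,\phi\ne0}R(\phi)$ is the mirror image: $S=\operatorname{span}(e_k,\dots,e_n)$ gives ``$\ge$'', and for ``$\le$'' an arbitrary $S\in\mathcal S_{n-k+1}$ meets $\operatorname{span}(e_1,\dots,e_k)$ non-trivially, again because the dimensions sum to $n+1$, forcing $\min_{\psi\in S}R(\psi)\le\lambda_k$.

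Finally I would obtain \eqref{eq:courant1} from \eqref{eq:courant2} by replacing $S$ with $S^\perp$: the conditions ``$\phi\perp S$, $\phi\ne0$'' describe precisely $S^\perp\setminus\{0\}$, and as $S$ ranges over $\mathcal S_{n-k}$ its orthogonal complement ranges over all of $\mathcal S_k$, so $\min_{S\in\mathcal S_{n-k}}\max_{\phi\perp S}R(\phi)=\min_{T\in\mathcal S_k}\max_{\phi\in T}R(\phi)=\lambda_k$; the same substitution turns the second identity of \eqref{eq:courant2} into $\max_{S\in\mathcal S_{k-1}}\min_{\phi\perp S}R(\phi)=\lambda_k$. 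This is a classical argument with no serious obstacle; the only delicate points are getting the dimension count right in each of the four cases and keeping straight which of the two eigenvector spans $\operatorname{span}(e_1,\dots,e_k)$ and $\operatorname{span}(e_k,\dots,e_n)$ serves as the extremiser and which serves, via the intersection argument, as the comparison space for the opposite bound.
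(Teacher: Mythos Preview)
Your proof is correct and is the standard argument. Note, however, that the paper does not actually prove this theorem: it merely ``recalls'' the Courant--Fischer principle as a well-known fact and states the four formulas without proof, so there is nothing to compare against. Your write-up via the spectral theorem, the Rayleigh-quotient bounds on $\operatorname{span}(e_1,\dots,e_k)$ and $\operatorname{span}(e_k,\dots,e_n)$, the dimension-count intersection argument, and the passage from \eqref{eq:courant2} to \eqref{eq:courant1} by $S\mapsto S^\perp$ is exactly the textbook proof.
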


Let $\W=(\G,\alpha,\m)\in\Ga$ be a finite \GAM-graph of order $n$.
The eigenvalues of its discrete magnetic Laplacian $\Delta_\alpha$ can
be written as
\begin{equation}
  \label{eq:spec-order}
  \sigma(\W)
  =\sigma(\Delta_\alpha)=
  \left(   \lambda_1(\W), \lambda_2(\W), \dots ,\lambda_n(\W)
  \right),
\end{equation}
where the eigenvalues are written in ascending order and repeated
according to their multiplicities, as in the above theorem.

\begin{remark}
  \label{rem:std.deg.zero}
  For a graph with only one vertex $v$ with weight $w(v)>0$ and no
  edge, the corresponding Laplacian is $0$, as the sum over
  $e \in E_v$ is empty; the spectrum in this case is $(0)$.  More
  generally, any isolated vertex in a graph contributes with an extra
  $0$ in the list of eigenvalues.
  
  For the standard weight, we have $w(v)=\deg(v)=0$.  In this case, it
  is convenient to associate to the graph with only one vertex and no
  edges again the eigenvalue $0$.  Hence any isolated vertex of the
  standard Laplacian contributes with one extra eigenvalue $0$ in the
  list of eigenvalues (see e.g.~\cite[bottom of p.~2]{chung:97}; in
  this way, the case $r=0$ also applies in
  \Thmenum{vert-contra}{vert-contra.a} for the standard weight).
\end{remark}

\subsection{Order on sets of finite sequences}
\label{subsec:ord.seqJ} 

We next relate spectra of different \GAM-graphs.  To do so, we first
introduce an order on the set of finite increasing sequences of real
numbers:
\begin{definition}
  \label{def:with-shift}
  Let $\Lambda$ and $\Lambda'$ two sequences of real numbers written in
  increasing order with lengths $n$ and $n'$ respectively, i.e.\ 
  \begin{equation*}
    \Lambda
    := \set{(\lambda_1, \lambda_2 \dots,\lambda_n)}
    { \lambda_1 \le \lambda_2 \le \dots \lambda_{n-1}\le\lambda_n};
  \end{equation*}	
  \begin{equation*}
    \Lambda'
    := \set{(\lambda'_1, \lambda'_2 \dots,\lambda'_{n'})}
    {\lambda'_1 \le \lambda'_2 \le \dots \lambda'_{n'-1}\le\lambda'_{n'}}.
  \end{equation*}
  Given $r \in \N_0$, we say that $\Lambda$ is \emph{smaller than (or
    equal to) $\Lambda$ with shift $r$} (and denote this by
  $\Lambda \less[r] \Lambda'$) if $n\geq n'-r$ and
  \begin{equation*}
    \lambda_k \leq \lambda'_{k+r} \quadtext{for} 1\leq k\leq n'-r.
  \end{equation*}	
  The \emph{length} of the sequence $\Lambda$ is defined by
  $\abs{\Lambda }:=n$.
\end{definition}

\begin{remark}
  \label{rem:spec.ord}
  Let $\Lambda, \Lambda'$ and $\Lambda''$ be increasing sequences of real numbers
  as above.
  \begin{enumerate}
  \item We denote $\Lambda \less[0] \Lambda'$ simply by
    $\Lambda \less \Lambda'$. 
  \item If $\Lambda \less[r] \Lambda'$ and
    $\Lambda' \less[s] \Lambda''$ we will write
    $\Lambda \less[r] \Lambda' \less[s] \Lambda''$.
  \item The case $\Lambda \less \Lambda' \less[r] \Lambda$ implies
    that $n= \abs \Lambda \ge \abs{\Lambda'} \ge n-r$.  If
    $\abs{\Lambda'}=n-r$, then
    $\Lambda \less \Lambda' \less[1] \Lambda$ is equivalent with the
    \emph{interlacing} of $\Lambda$ and $\Lambda'$ similarly as
    in~\cite[Section~2.5]{brouwer-haemers:12}, namely
    \begin{equation*}
      \lambda_1 \le \lambda_1' \le \lambda_{1+r}, \quad
      \lambda_2 \le \lambda_2' \le \lambda_{2+r}, \quad
      \dots, \quad
      \lambda_{n-r} \le \lambda_{n-r}' \le \lambda_n.
    \end{equation*}
    Especially if $r=1$ it becomes the usual interlacing (explaining
    also the name).
    \begin{equation*}
      \lambda_1 \le \lambda_1' \le \lambda_2 \le \lambda_2'
      \le \dots \le \lambda_{n-1} \le \lambda_{n-1}' \le \lambda_n.
    \end{equation*}
  \end{enumerate}
\end{remark}

We state in the next lemma some direct consequences of the preceding definition.
\begin{lemma}
  \label{lem:basic}
  Let $\Lambda$, $\Lambda'$ and $\Lambda''$ be three increasing sequences of real
  numbers and consider $r,s\in\mathbb{N}_0$.
  \begin{enumerate}
  \item 
    \label{basic:1} 
    If $r \in \N_0$, then $\Lambda \less[r] \Lambda$
    \emph{(reflexivity)}.
  \item 
    \label{basic:2} 
    If $\Lambda \less \Lambda' \less \Lambda$, then $\Lambda=\Lambda'$
    \emph{(antisymmetry)}.
  \item 
    \label{basic:3} 
    If $\Lambda \less[r] \Lambda'$ and
    $\Lambda' \less[s] \Lambda''$, then $\Lambda \less[r+s]\Lambda''$
    \emph{(transitivity)}.
  \item 
    \label{basic:4} 
    If $\Lambda \less[r] \Lambda'$ and $r\leq s$, then
    $\Lambda \less[s] \Lambda'$.
  \item
    \label{basic:5}
    If $\Lambda \less \Lambda'$ with $n=\abs \Lambda = \abs{\Lambda'}$
    and if $\sum_{i=1}^n \lambda_i=\sum_{i=1}^n \lambda'_i$, then
    $\Lambda=\Lambda'$, i.e.\ $\lambda_i=\lambda'_i$, $i=1,\dots,n$
    (equal sums).
  \end{enumerate}
\end{lemma}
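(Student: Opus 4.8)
The plan is to verify each of the five items directly from \Def{with-shift}: every assertion is an immediate consequence of the two defining conditions of $\less[r]$ (the length bound $n\ge n'-r$ and the entrywise bound $\lambda_k\le\lambda'_{k+r}$ for $1\le k\le n'-r$), combined with the fact that the sequences involved are nondecreasing. I would treat the parts essentially in the stated order, since \itemref{basic:3} is the only one requiring any genuine bookkeeping and the other four are one-liners.

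For \itemref{basic:1}, taking $\Lambda'=\Lambda$ gives $n=n'$, so $n\ge n-r$ holds because $r\ge 0$, and $\lambda_k\le\lambda_{k+r}$ for $1\le k\le n-r$ is immediate from monotonicity of $\Lambda$. For \itemref{basic:2}, $\Lambda\less\Lambda'$ gives $n\ge n'$ together with $\lambda_k\le\lambda'_k$ for all $k\le n'$, while $\Lambda'\less\Lambda$ gives $n'\ge n$ together with $\lambda'_k\le\lambda_k$ for all $k\le n$; hence $n=n'$ and $\lambda_k=\lambda'_k$ for every $k$. For \itemref{basic:4}, from $r\le s$ and $\Lambda\less[r]\Lambda'$ one gets $n\ge n'-r\ge n'-s$, and for $1\le k\le n'-s\ ({}\le n'-r)$ one combines $\lambda_k\le\lambda'_{k+r}$ with $\lambda'_{k+r}\le\lambda'_{k+s}$ (monotonicity of $\Lambda'$, noting $k+s\le n'$). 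For \itemref{basic:5}, with equal lengths the relation $\Lambda\less\Lambda'$ forces $\lambda_i\le\lambda'_i$ for all $i$; if any of these inequalities were strict the two sums could not agree, so all are equalities, i.e.\ $\Lambda=\Lambda'$.

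The one step that needs a little care is transitivity, \itemref{basic:3}. Given $\Lambda\less[r]\Lambda'$ and $\Lambda'\less[s]\Lambda''$, the length bound follows by chaining: $n\ge n'-r\ge (n''-s)-r=n''-(r+s)$. For the entrywise bound, fix $k$ with $1\le k\le n''-(r+s)$. Then $k\le n''-(r+s)\le n'-r$ (using $n'\ge n''-s$), so the first relation yields $\lambda_k\le\lambda'_{k+r}$; and setting $j=k+r$ one checks $1\le j\le n''-s$ (since $k+r\le n''-(r+s)+r=n''-s$), so the second relation yields $\lambda'_{k+r}\le\lambda''_{k+r+s}$. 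Concatenating gives $\lambda_k\le\lambda''_{k+(r+s)}$, as required. I expect this index juggling — keeping track of which ranges the various inequalities are valid on — to be the only mild obstacle; everything else is a direct unwinding of the definition together with the monotonicity of the sequences.
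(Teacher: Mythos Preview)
Your proof is correct and follows essentially the same direct-verification route as the paper for items \itemref{basic:1}--\itemref{basic:4}. The one notable difference is in \itemref{basic:5}: the paper argues by induction on $n$ (splitting off the last term and invoking the inductive hypothesis on the first $n$), whereas your one-line argument --- nonnegative terms summing to zero must all vanish --- is more direct and arguably cleaner; both are valid, but yours avoids the unnecessary inductive scaffolding.
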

  In case $\abs\Lambda=\abs{\Lambda'}$ then $\Lambda \less \Lambda'$
  implies (in the usual sense of majorisation of vectors in $\R^d$, cf.,~\cite{moa:11}) that
  $\Lambda'$ majorises $\Lambda$. Nevertheless the fact that, in addition, we allow
  the shift $r$ as a parameter is particularly convenient to study and
  relate spectra of Laplacians.
\begin{proof}
  The reflexivity property and part \itemref{basic:4} follow from the
  fact that the sequence $\Lambda$ is written in increasing order. The
  antisymmetry property \itemref{basic:2} is also a direct consequence
  of \Def{with-shift}. To show \itemref{basic:3}, note
  that $\card{\Lambda}+r\geq \card{\Lambda'}$ and
  $\lambda_k \leq \lambda'_{k+r}$ for $1\leq k\leq \card{\Lambda'}-r$,
  as well as $\card{\Lambda'}+s\geq \card{\Lambda''}$ and
  $\lambda_l \leq \lambda'_{l+s}$ for
  $1\leq l\leq \card{\Lambda''}-s$. This implies
  $\card{\Lambda}+r+s\geq \card{\Lambda''}$ and
  $\lambda_k \leq \lambda'_{k+r+s}$ for
  $1\leq k\leq \card{\Lambda''}-r-s$.  To show \itemref{basic:5} note
  first that the statement is trivial for $n=1$. Assume that it is
  true for $n \in \N$.  We then conclude from $\Lambda \less \Lambda'$
  for two sequences of length $n+1$, that
  \begin{equation*}
    \sum_{k=1}^n \lambda_k -\sum_{k=1}^n \lambda'_k \le 0
    \qquadtext{and}
    \lambda'_k - \lambda_k \ge 0.
  \end{equation*}
  By assumption (equality of the traces for $n+1$) both above left
  hand sides are equal, hence equal to $0$.  It follows by the
  induction hypothesis that $\lambda_k = \lambda'_k$ for
  $k=1,\dots, n$, and hence also $\lambda_{n+1}=\lambda'_{n+1}$, again
  by the equality of the traces for $n+1$.
\end{proof}

\subsection{Spectral preorder}
\label{subsec:spec-order}
We will now apply the relation $\less$ described before to relate the
spectrum of the magnetic Laplacian on different \GAM-graphs.
\begin{definition}[Spectral preorder of magnetic weighted graphs and
  isospectrality]
  \label{def:spectral-order}
  Let $\W,\W'\in \Ga$ be two finite \GAM-graphs.  We say that
  \emph{$\W$ is (spectrally) smaller than $\W'$ with shift $r$}
  (denoted by $\W\less[r]\W'$) if $\sigma(\W) \less[r] \sigma(\W')$,
  where $\sigma(\W)$ and $\sigma(\W')$ are the spectra of the
  corresponding discrete magnetic weighted Laplacians as
  in~\Eq{eq:spec-order}, i.e.\ if $\card \W + r \ge \card {\W'}$ and
  if
  \begin{equation*}
    \lambda_k(\W) \le \lambda_{k+r}(\W')
    \qquad \text{for all $1 \le k \le \card{\W'}-r$.}
  \end{equation*}
  If $r=0$ we write again simply $\W \less \W'$.

  We say that $\W$ and $\W'$ are \emph{isospectral}, if $\W \less \W'$
  and $\W' \less \W$, i.e., if the (magnetic weighted) Laplace
  spectrum of $\W$ and $\W'$ agrees; in other words if both graphs
  have the same number $n$ of vertices and
  $\lambda_k(\W)=\lambda_k(\W')$ for all $k=1,\dots,n$.
\end{definition}

\begin{proposition}
  \label{prp:relation2}
  The relation $\less$ is a preorder on $\Ga$. 
\end{proposition}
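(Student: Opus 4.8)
The plan is to recognize that $\less$ on (finite) \GAM-graphs is simply the pullback, along the spectrum map $\W \mapsto \sigma(\W)$ of \Eq{eq:spec-order}, of the relation $\less$ on finite increasing sequences of real numbers introduced in \Def{with-shift}. Since the reflexivity and transitivity of the latter relation have already been recorded in \Lem{basic}, the proof is essentially a one-line translation through \Def{spectral-order}, and no genuine obstacle arises.

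In detail: for reflexivity, let $\W\in\Ga$ be a finite \GAM-graph; then $\sigma(\W)$ is a finite increasing sequence of reals (the eigenvalues of $\Delta_\alpha$ counted with multiplicity), and \Lemenum{basic}{basic:1} applied with $r=0$ gives $\sigma(\W)\less\sigma(\W)$, which by \Def{spectral-order} means $\W\less\W$. For transitivity, suppose $\W\less\W'$ and $\W'\less\W''$ for finite \GAM-graphs $\W,\W',\W''$; by \Def{spectral-order} this says $\sigma(\W)\less[0]\sigma(\W')$ and $\sigma(\W')\less[0]\sigma(\W'')$, so \Lemenum{basic}{basic:3} with $r=s=0$ yields $\sigma(\W)\less[0]\sigma(\W'')$, i.e.\ $\W\less\W''$.

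The only point requiring any care is the bookkeeping of sequence lengths, i.e.\ the condition $\card\W+r\ge\card{\W'}$ built into \Def{spectral-order}; for $r=0$ it reads $\card\W\ge\card{\W'}$ and it is propagated correctly by the length inequalities inside \Lemenum{basic}{basic:3}. I would also remark in passing that the same computation shows $\less[r]$ is reflexive for every $r\in\N_0$ and that $\less[r]$ followed by $\less[s]$ gives $\less[r+s]$, the form actually used in later sections; the proposition is the special case $r=s=0$. Finally, $\less$ is in general not antisymmetric on $\Ga$ (non-isomorphic graphs may be isospectral, cf.\ \Def{spectral-order}), so it is only a preorder and not a partial order, consistent with the statement.
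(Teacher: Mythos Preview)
Your proof is correct and follows exactly the paper's approach: the paper's proof is a single sentence invoking \Lemenums{basic}{basic:1}{basic:3} with shifts $r=s=0$, and you have simply unpacked this in slightly more detail by making explicit the pullback along $\W\mapsto\sigma(\W)$ through \Def{spectral-order}. The additional remarks you make (about the length bookkeeping, the general $r,s$ case, and the failure of antisymmetry) are accurate and consistent with what the paper notes elsewhere.
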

\begin{proof}
  \Lemenums{basic}{basic:1}{basic:3} show the reflexivity and
  transitivity of $\less$ using shifts $r=s=0$.
\end{proof}

Note that $\less$ is invariant under \GAM-isomorphisms, as
\GAM-isomorphisms have isospectral magnetic Laplacians.  Since there
are non-isomorphic isospectral graphs it follows that $\less$ is not
antisymmetric, i.e.\ equality of spectra does not imply that the
graphs are (\GAM-)isomorphic.  In particular, $\less$ is not a partial
order.

\begin{remark}
  \label{rem:spectral.order}
\begin{itemize}
\item[(i)] The second smallest eigenvalue of the usual Laplacian gives
  a measure of the connectivity of the graph (see~\cite{fiedler:73}
  and \Subsec{cheeger}). Defining $a(\W):=\lambda_2(\W)$ we obtain
  directly from the definition of the spectral preorder that
  \begin{equation*}
    \W\less\W' \implies a(\W)\leq a(\W')\;.
  \end{equation*}
  \Cors{delete-edge}{vert-contra} give a quantitative measure of the
  fact that deleting edges reduces the connectivity and contracting
  vertices increases the connectivity of the graph.

 \item[(ii)] The name \emph{spectral order} has been introduced by
  Olson~\cite{olson:71} for two self-adjoint (bounded) operators $T_1$
  and $T_2$ in a Hilbert space $\HS$ with spectral resolutions
  $E_j(t):=\1_{(-\infty,t]}(T_j)$ ($j=1,2$).  Then $T_1 \preceq T_2$ if
  and only if $E_1(t) \le E_2(t)$ for all $t \in \R$ (i.e.\ if
  $\iprod{E_1(t)\phi}\phi \le \iprod{E_2(t)\phi} \phi$ for all
  $\phi \in \HS$).  If $T_1 \ge 0$ and $T_2 \ge 0$, then
  $T_1 \preceq T_2$ is equivalent with $T_1^p \le T_2^p$ for all
  $p \in \N$.  If both operators have purely discrete spectrum
  $\lambda_k(T_j)$ (written in increasing order and repeated according
  to multiplicity) then we have the implications
  \begin{equation*}
    T_1 \preceq T_2 
    \quad\Rightarrow\quad
    T_1 \le T_2
    \quad\Rightarrow\quad
    T_1 \less T_2,
  \end{equation*}
  where the latter means that $\lambda_k(T_1)\le \lambda_k(T_2)$ for
  all $k$ (the latter implication follows from the min-max principle
  as in \Thm{courant}).
\end{itemize}

\end{remark}

Next we lift \GAM-homomorphism to spaces of functions on vertices and edges:
\begin{lemma}
  \label{lem:j}
  Let $\map{\pi}{\W}{\W'}$ be \aGAM-homomorphism where $\W=\Wfull$,
  $\W'={\Wtwofull}$ and $\G=(V,E,\bd)$, $\G'=(V',E',\bd')$ denote the underlying 
  graphs. Define the natural
  the identification operators on functions over vertices
  $\map{J^0}{\lsqr{V',\m'}}{\lsqr{V, \m}}$ and edges
  $\map{J^1}{\lsqr{E',\m'}}{\lsqr{E, \m}}$ by
  $J^0\phi = \phi \circ \pi$ and $J^1\eta = \eta \circ \pi$,
  respectively. Then the following holds:
  \begin{enumerate}
  \item
    \label{j.a}
    We have
    $\norm[\lsqr{V,\m}] {J^0\phi} \ge \norm[\lsqr{V',\m'}] \phi$ for
    all $\phi \in \lsqr{V',\m'}$.  In particular, $J^0$ is injective.
    If $\pi$ is vertex measure preserving, then $J^0$ is an isometry.
  \item
    \label{j.b}
    We have
    $\norm[\lsqr{E, \m}] {J^1\eta} \le \norm[\lsqr{E',\m'}] \eta$ for
    all $\eta \in\lsqr{E',\m'}$.  If $\pi$ is edge measure preserving,
    then $J^1$ is an isometry.
  \item
    \label{j.c}
    We have $d_{\alpha} J^0 = J^1d_{\alpha'}$.
  \end{enumerate}
\end{lemma}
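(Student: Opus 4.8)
The plan is to establish all three parts by direct computation, reindexing sums over the fibres of $\pi$ and then feeding in the two weight inequalities from \Def{hom.gam-graph}. For \itemref{j.a}, given $\phi\in\lsqr{V',\m'}$ I would regroup the defining sum of the $\lsqr{V,\m}$-norm of $J^0\phi=\phi\circ\pi$ according to the value of $\pi$:
\begin{equation*}
  \norm[\lsqr{V,\m}]{J^0\phi}^2
  =\sum_{v\in V}\abs{\phi(\pi(v))}^2\,\m(v)
  =\sum_{v'\in V'}\abs{\phi(v')}^2\!\!\sum_{\substack{v\in V\\ \pi(v)=v'}}\!\!\m(v)
  =\sum_{v'\in V'}\abs{\phi(v')}^2\,(\pi_*\m)(v').
\end{equation*}
The vertex weight inequality $\pi_*\m\ge\m'$ of \Defenum{hom.gam-graph}{hom.mow-graph.c} then gives $\norm[\lsqr{V,\m}]{J^0\phi}^2\ge\sum_{v'\in V'}\abs{\phi(v')}^2\m'(v')=\norm[\lsqr{V',\m'}]{\phi}^2$. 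Injectivity of $J^0$ is immediate, since $J^0$ is linear and $\norm[\lsqr{V',\m'}]{\phi}\le\norm[\lsqr{V,\m}]{J^0\phi}$; and if $\pi$ is vertex measure preserving then $\pi_*\m=\m'$, so the inequality becomes an equality and $J^0$ is an isometry.

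For \itemref{j.b} I would argue symmetrically. One first checks that $J^1\eta=\eta\circ\pi$ is again a $1$-form on $\G$, i.e.\ $(J^1\eta)_{\bar e}=-(J^1\eta)_e$, which holds because $\pi(\bar e)=\overline{\pi e}$ and $\eta_{\bar{e'}}=-\eta_{e'}$. Then the same regrouping gives
\begin{equation*}
  \norm[\lsqr{E,\m}]{J^1\eta}^2
  =\frac12\sum_{e\in E}\abs{\eta(\pi(e))}^2\,\m_e
  =\frac12\sum_{e'\in E'}\abs{\eta(e')}^2\,(\pi_*\m)_{e'}
  \le\frac12\sum_{e'\in E'}\abs{\eta(e')}^2\,\m'_{e'}
  =\norm[\lsqr{E',\m'}]{\eta}^2,
\end{equation*}
where the inequality is now the edge weight inequality $\pi_*\m\le\m'$ of \Defenum{hom.gam-graph}{hom.mow-graph.d}; it becomes an equality exactly when $\pi$ is edge measure preserving.

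For \itemref{j.c} I would simply unwind the definitions. Fix $e\in E$ and $\phi\in\lsqr{V',\m'}$; by the formula \Eq{eq:d.alpha} for $d_\alpha$,
\begin{equation*}
  (d_\alpha J^0\phi)_e
  =\e^{\im\alpha_e/2}\,\phi(\pi(\bd_+e))-\e^{-\im\alpha_e/2}\,\phi(\pi(\bd_-e)).
\end{equation*}
Substituting $\pi(\bd_\pm e)=\bd'_\pm(\pi e)$ (the graph homomorphism property of \Def{graph-homo}) and $\alpha_e=\alpha'_{\pi e}$ (magnetic invariance, \Defenum{hom.gam-graph}{hom.mow-graph.b}) turns the right-hand side into $(d_{\alpha'}\phi)_{\pi e}=(J^1 d_{\alpha'}\phi)_e$, which is exactly the claim.

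No step here is genuinely hard; the only points that need care are keeping the two weight inequalities pointing in the correct direction (vertex weights are pushed \emph{up}, so $J^0$ can only enlarge norms; edge weights are pushed \emph{down}, so $J^1$ can only shrink them) and checking that $J^1$ really lands in the space of $1$-forms. So the ``main obstacle'' is essentially bookkeeping rather than a conceptual difficulty, and the content of the lemma is precisely that the identification operators $J^0,J^1$ interact with the twisted exterior derivative in the way one would hope.
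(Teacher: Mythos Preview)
Your proof is correct and follows exactly the same approach as the paper: regroup the norms over the fibres of $\pi$ to expose the push-forward measures, then apply the vertex and edge weight inequalities from \Defenum{hom.gam-graph}{hom.mow-graph.c}--\itemref{hom.mow-graph.d}, and verify the intertwining relation~\itemref{j.c} by direct substitution using the graph-homomorphism property and magnetic invariance. Your write-up is in fact more detailed than the paper's (which dispatches~\itemref{j.b} and~\itemref{j.c} in one line each), and your explicit check that $J^1\eta$ is again a $1$-form is a useful addition.
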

\begin{proof}
  \itemref{j.a}~From \Defenum{hom.gam-graph}{hom.mow-graph.c} we have:
  \begin{align*}
    \normsqr[\lsqr{V, \m}] {J^0\phi}
    &= \sum_{v\in V}  \m(v) \abssqr{(\phi\circ\pi)(v)}\\
    & = \sum_{v' \in V'}  
      \Bigl(\sum_{v \in \pi^{-1}(v')} \m(v) \Bigr) \abssqr{\phi(v')}
      = \sum_{v' \in V'}  
      (\pi_* \m)(v') \abssqr{\phi(v')} \\ 
    &\ge \sum_{ v'\in V'} \m'(v') \abssqr{\phi(v')}
      = \normsqr[\lsqr{V',\m'}] \phi.
  \end{align*}
  Clearly, if $\pi$ is vertex measure preserving, then $\pi_*\m=\m'$
  on $V'$, and equality in the above estimate holds.
	
  \itemref{j.b}~The assertion of the identification map $J^1$ on the
  edges follows similarly from
  \Defenum{hom.gam-graph}{hom.mow-graph.d}.
	
  \itemref{j.c}~This intertwining equation follows immediately from
  the properties of \GAM-homomorphism given in
  \DefenumS{hom.gam-graph}{hom.mow-graph.a}{hom.mow-graph.b}.
  \qedhere
\end{proof}

The following result showing that the geometric preorder is stronger
than the spectral preorder follows from the min-max principle
mentioned in \Thm{courant}
Recall that $\W \lesse \W'$ means that there is \aGAM-homomorphism
$\pi\colon\W\to\W'$, see \Defs{hom.gam-graph}{relation1}.
\begin{theorem}
  \label{thm:homomorphism}
  Let $\W, \W'\in\Ga$, then
  \begin{equation*}
   \W\lesse \W'\qquadtext{implies} \W \less \W'.
  \end{equation*}
  Moreover, if the \GAM-homomorphism $\pi\colon\W\to\W'$ is (vertex
  and edge) measure preserving, then we have additionally
  \begin{equation*}
    \W' \less[r] \W, \qquadtext{where}  r=\card{ \W}-\card{\W'}\geq 0.
  \end{equation*}
\end{theorem}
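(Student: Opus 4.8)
The plan is to use the Courant--Fischer characterisation (\Thm{courant}) together with the identification operators $J^0$, $J^1$ from \Lem{j}. Recall that $\W \lesse \W'$ means that there is \aGAM-homomorphism $\pi\colon \W \to \W'$, and write $\Delta_\alpha = d_\alpha^* d_\alpha$, $\Delta_{\alpha'} = d_{\alpha'}^* d_{\alpha'}$ for the two magnetic Laplacians on $\HS = \lsqr{V,\m}$ and $\HS' = \lsqr{V',\m'}$ respectively. The point is that $J^0$ maps $\HS'$ into $\HS$ in a way that does not increase Rayleigh quotients: for $\phi \in \lsqr{V',\m'}$ with $\phi \ne 0$ we have $J^0\phi \ne 0$ (injectivity from \Lemenum{j}{j.a}) and, using the intertwining $d_\alpha J^0 = J^1 d_{\alpha'}$ from \Lemenum{j}{j.c} together with $\norm[\lsqr{E,\m}]{J^1\eta} \le \norm[\lsqr{E',\m'}]{\eta}$ from \Lemenum{j}{j.b} and $\norm[\lsqr{V,\m}]{J^0\phi} \ge \norm[\lsqr{V',\m'}]{\phi}$ from \Lemenum{j}{j.a},
\begin{equation*}
  \frac{\iprod{\Delta_\alpha J^0\phi}{J^0\phi}}{\iprod{J^0\phi}{J^0\phi}}
  = \frac{\normsqr[\lsqr{E,\m}]{d_\alpha J^0\phi}}{\normsqr[\lsqr{V,\m}]{J^0\phi}}
  = \frac{\normsqr[\lsqr{E,\m}]{J^1 d_{\alpha'}\phi}}{\normsqr[\lsqr{V,\m}]{J^0\phi}}
  \le \frac{\normsqr[\lsqr{E',\m'}]{d_{\alpha'}\phi}}{\normsqr[\lsqr{V',\m'}]{\phi}}
  = \frac{\iprod{\Delta_{\alpha'}\phi}{\phi}}{\iprod{\phi}{\phi}}.
\end{equation*}

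For the first assertion $\W \less \W'$, fix $k$ with $1 \le k \le \card{\W'}$; note $\card{\W} \ge \card{\W'}$ by \Prpenum{inj-sur}{mow-hom.a} (surjectivity of $\pi$ on vertices), so this is exactly the length condition needed for shift $0$. Use the form~\eqref{eq:courant2} of Courant--Fischer: $\lambda_k(\W') = \min_{S' \in \mathcal S_k}\max_{0 \ne \phi \in S'} \iprod{\Delta_{\alpha'}\phi}{\phi}/\iprod{\phi}{\phi}$, where $\mathcal S_k$ runs over $k$-dimensional subspaces of $\HS'$. Given an optimal (or near-optimal) $S'$, the image $J^0 S'$ is a $k$-dimensional subspace of $\HS$ because $J^0$ is injective and linear, and by the Rayleigh-quotient inequality above, $\max_{0 \ne \psi \in J^0 S'} \iprod{\Delta_\alpha \psi}{\psi}/\iprod{\psi}{\psi} \le \max_{0 \ne \phi \in S'} \iprod{\Delta_{\alpha'}\phi}{\phi}/\iprod{\phi}{\phi}$. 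Hence $\lambda_k(\W) \le \max_{0 \ne \psi \in J^0 S'}(\cdots) \le \lambda_k(\W')$, which is $\W \less \W'$.

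For the second assertion, assume in addition that $\pi$ is measure preserving, so $J^0$ is an isometry and $J^1$ is an isometry by \Lem{j}; in this case the displayed Rayleigh-quotient chain holds with equality, so $J^0$ identifies $\HS'$ isometrically with the subspace $J^0\HS' \subset \HS$ and intertwines $\Delta_{\alpha'}$ with the compression of $\Delta_\alpha$ to $J^0\HS'$ — more precisely $\iprod{\Delta_\alpha J^0\phi}{J^0\psi}_{\HS} = \iprod{\Delta_{\alpha'}\phi}{\psi}_{\HS'}$ for all $\phi, \psi \in \HS'$, since $d_\alpha J^0 = J^1 d_{\alpha'}$ and $J^1$ is an isometry. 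Thus, writing $m = \card{\W} = \dim\HS$ and $n = \card{\W'} = \dim\HS' = m - r$, the self-adjoint operator $\Delta_{\alpha'}$ is unitarily equivalent to the compression $P \Delta_\alpha P \restr{J^0\HS'}$ where $P$ is the orthogonal projection onto the $(m-r)$-dimensional subspace $J^0\HS'$. Standard Cauchy interlacing for compressions to a subspace of codimension $r$ (e.g.\ \cite[Theorem~4.3.28]{horn-johnson:13}, or iterate the codimension-one case) then gives $\lambda_k(\Delta_\alpha) \le \lambda_k(\Delta_{\alpha'}) \le \lambda_{k+r}(\Delta_\alpha)$ for $1 \le k \le m - r$; the right-hand inequality $\lambda_k(\W') \le \lambda_{k+r}(\W)$ together with $\card{\W'} = \card{\W} - r \le \card{\W} + r$ is precisely $\W' \less[r] \W$. (We already reproved the left-hand inequality above.)

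I expect the main obstacle to be the bookkeeping around the codimension-$r$ interlacing step: one must be careful that measure-preservation of $\pi$ gives isometry of \emph{both} $J^0$ and $J^1$, that consequently the chain of Rayleigh-quotient inequalities collapses to equalities so that $\Delta_{\alpha'}$ really is a compression (not merely dominated), and that the surjectivity of $\pi$ is what pins down the relation $n = m - r$ between the dimensions. Everything else is a direct application of \Thm{courant} and \Lem{j}, with no genuinely new estimate required.
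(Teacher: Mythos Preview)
Your proof is correct and follows essentially the same approach as the paper: both use \Lem{j} to obtain the Rayleigh-quotient inequality~\eqref{eq:rayleigh.id.op} and then apply the min-max characterisation from \Thm{courant}, exploiting the injectivity of $J^0$ to preserve subspace dimensions. The only cosmetic difference is in the measure-preserving case: the paper carries out the max-min form of~\eqref{eq:courant2} directly with the subspace spanned by the top $n-k+1$ eigenfunctions of $\Delta_{\alpha'}$, whereas you recognise that $(J^0)^*\Delta_\alpha J^0=\Delta_{\alpha'}$ identifies $\Delta_{\alpha'}$ as a compression of $\Delta_\alpha$ and invoke Cauchy interlacing as a black box---but Cauchy interlacing is itself proved by exactly that max-min argument, so the two are the same computation in different packaging.
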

\begin{proof}
  First note that $\pi$ is
  surjective on the set of vertices by \Prpenum{inj-sur}{mow-hom.a},
  hence $\abs \W \ge \abs{\W'}$ and therefore $r \ge 0$.  From \Lem{j}
  we conclude
  \begin{equation}
    \label{eq:rayleigh.id.op}
    \frac{\normsqr[\lsqr{E, \m}]{d_{\alpha} J^0 \phi'}}
    {\normsqr[\lsqr{V, \m}]{J^0 \phi'}}
    =
    \frac{\normsqr[\lsqr{E, \m}]{J^1 d_{\alpha'} \phi'}}
    {\normsqr[\lsqr{V, \m}]{J^0 \phi'}}
    \le
    \frac{\normsqr[\lsqr{E',\m'}]{d_{\alpha'} \phi'}}
    {\normsqr[\lsqr{V',\m'}]{\phi'}}.
  \end{equation}
  Denote by $S_k'$ the $k$-dimensional subspace of $\lsqr{V',\m'}$
  spanned by the first $k$ eigenfunctions of $\Delta_{\W'}$.  From the
  min-max characterisation of the $k$-th eigenvalue (first equality
  in~\Eq{eq:courant2}), we then have by the preceding estimate:
   \begin{align*}
     \lambda_k(\W)
     =\min_{S \in \mathcal S_k} \max_{\substack{\phi\in S \\ \phi\neq 0}}
     \frac{\normsqr[\lsqr{E, \m}]{d_{\alpha} \phi}}
     {\normsqr[\lsqr{V, \m}] \phi}
     &\leq \max_{\substack{\phi' \in S_k' \\ \phi' \neq 0}}
     \frac{\normsqr[\lsqr{E, \m}]{d_{\alpha} J^0 \phi' }}
     {\normsqr[\lsqr{V, \m}] {J^0 \phi'} } \\
     &\leq \max_{\substack{\phi' \in S'_k \\ \phi' \neq 0}} 
     \frac{\normsqr[\lsqr{E',\m'}]{d_{\alpha'} \phi'}}
     {\normsqr[\lsqr{V',\m'}]{\phi'}}
     = \lambda_k(\W')
   \end{align*}
   for all $1\leq k\leq \abs{\W'}$, where $\mathcal S_k$ is the set of
   all $k$-dimensional subspaces of $\ell_2(V,\m)$.  Moreover, as $J^0$
   is injective, $S=J^0(S_k')$ is also $k$-dimensional, i.e.\ 
   $J^0(S_k') \in \mathcal S_k$. This shows $\W\less\W'$.

   If $\pi$ is measure preserving, then $J^0$ and $J^1$ are
   isometries, hence we have equality in~\Eq{eq:rayleigh.id.op}.
   Moreover, let $n=\abs \W$, $n'=\abs{\W'}$ and denote by $T_k'$ the
   space generated by the $n-k+1$ eigenfunctions $\phi'_{n'-n+k}$,
   \dots, $\phi_{n'}'$ of the Laplacian on $\W'$, then we have
   similarly as before (second equality
  in~\Eq{eq:courant2})
   \begin{align*}
     \lambda_k(\W)
     =\max_{S \in \mathcal S_{n-k+1}} \min_{\substack{\phi \in S \\ \phi\neq 0}}
     \frac{\normsqr[\lsqr{E, \m}]{d_\alpha \phi}}
     {\normsqr[\lsqr{V, \m}] \phi}
     &\ge \min_{\substack{\phi' \in T'_k \\ \phi' \neq 0}}
     \frac{\normsqr[\lsqr{E, \m}]{d_{\alpha} J^0 \phi' }}
     {\normsqr[\lsqr{V, \m}] {J^0 \phi'} } \\
     &= \min_{\substack{\phi' \in T'_k \\ \phi' \neq 0}} 
     \frac{\normsqr[\lsqr{E',\m'}]{d_{\alpha'} \phi'}}
     {\normsqr[\lsqr{V',\m'}]{\phi'}}
     = \lambda_{n'-(n-k+1)+1}(\W')
     = \lambda_{k-r}(\W'),
   \end{align*}
   where $S=J^0(T_k')$ is $(n-k+1)$-dimensional since $J^0$ is
   injective. From Definition~\ref{def:with-shift} and~\ref{def:spectral-order}
   it follows that $\W'\less[r]\W$.
\end{proof}

\begin{remark}
  \label{rem:counterex.homo}
  Note that the converse statement of \Thm{homomorphism} is wrong in general,
  i.e.\ there are \GAM-graphs such that $\W \less \W'$ but not
  $\W \lesse \W'$.  As an example consider the preorder relations between 
  $\W_9$ and $\W_{10}$ in \Fig{order-graph}.
\end{remark}

We have the following simple consequence of the previous theorem and
\Exenum{gam-homo}{ex:gam-homo1};
\begin{corollary}
  \label{cor:std-com}
  Let $\W=(\G,\alpha,\deg)$ and $\W'=(\G,\alpha,\com)$, then
  $ \W \lesse \W'$.  In particular $\W \less \W'$, i.e.\ the
  (magnetic) eigenvalues of the standard Laplacian are always lower or
  equal than the (magnetic) eigenvalues of the combinatorial
  Laplacian.
\end{corollary}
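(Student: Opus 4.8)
The plan is to recognise this as an immediate instance of \Thm{homomorphism} applied to the identity map, using the weight comparison already recorded in \Exenum{gam-homo}{ex:gam-homo1}. Since $\W$ and $\W'$ share the same underlying graph $\G$ and the same magnetic potential $\alpha$, the natural candidate for \aGAM-homomorphism is $\pi = \id_\G \colon (\G,\alpha,\deg) \to (\G,\alpha,\com)$. Conditions~\itemref{hom.mow-graph.a} and~\itemref{hom.mow-graph.b} of \Def{hom.gam-graph} are trivially satisfied because the graph and the potential are unchanged.

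The only thing to check is the pair of weight inequalities. On the edges both weights are constantly $1$, so $(\pi_*\m)_e = \m_e = 1 = \m'_e$ and~\itemref{hom.mow-graph.d} holds (with equality). On the vertices we need $(\pi_*\m)(v) = \m(v) = \deg^\G(v) \ge 1 = \m'(v)$ for all $v \in V(\G)$; this holds since a graph carrying standard weights has no isolated vertices, and if one adopts the convention of \Rem{std.deg.zero} for isolated vertices the same bound is retained. This is exactly the content of \Exenum{gam-homo}{ex:gam-homo1}, so $\id_\G$ is \aGAM-homomorphism and hence $\W \lesse \W'$ by \Def{relation1}.

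It then remains to invoke \Thm{homomorphism}, which gives $\W \less \W'$. Unwinding \Def{spectral-order}: since $\card{V(\W)} = \card{V(\W')}$ the relevant shift is $r = 0$, so $\W \less \W'$ means precisely $\lambda_k(\W) \le \lambda_k(\W')$ for all $k = 1, \dots, \card\W$, i.e.\ every eigenvalue of the standard magnetic Laplacian is bounded above by the corresponding eigenvalue of the combinatorial magnetic Laplacian, for any vector potential $\alpha$.

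There is essentially no obstacle here; the statement is a packaging of earlier results. The one point deserving a word is the treatment of isolated vertices for the standard weight, where the strict positivity of the vertex weight would otherwise be violated — this is handled by the convention fixed in \Rem{std.deg.zero}. (One could alternatively note that the Rayleigh-quotient inequality underlying \Thm{homomorphism} reduces, for $\pi = \id_\G$, simply to the pointwise comparison of the two $\lsqr{V,\cdot}$ norms combined with the equality of the $\lsqr{E,\cdot}$ norms, which makes the estimate completely transparent.)
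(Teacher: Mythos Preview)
Your proof is correct and follows exactly the paper's approach: the corollary is stated as an immediate consequence of \Thm{homomorphism} together with \Exenum{gam-homo}{ex:gam-homo1}, and you have simply spelled out the verification that $\id_\G$ is \aGAM-homomorphism from $(\G,\alpha,\deg)$ to $(\G,\alpha,\com)$. The extra care about isolated vertices is fine but not strictly needed, since by the paper's convention a vertex weight is strictly positive and hence a graph with standard weights has no isolated vertices.
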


%
\section{Geometric perturbations and preorders}  
\label{sec:geo}
%

In this section, we present several elementary and composite
perturbations of \emph{finite} \GAM-graphs (deleting edges,
contracting vertices, etc.) and study systematically their effect on
the spectrum of the magnetic Laplacian. We will apply the geometric
and spectral preorders to quantify the effect of the
perturbations. The results are stated for general weights. We treat
the important special cases of combinatorial and standard weights as
corollaries.

\subsection{Elementary perturbations}  
\label{subsec:elementary}

We consider first two elementary perturbations on graphs: deleting an
edge and contracting two vertices.
\subsubsection{Deleting an edge}  
\label{subsec:delete-edge}
Let $\W=(\G,\alpha,\m)$ be \aGAM-graph with $\G=(V,E,\bd)$.
\emph{Deleting an edge $e_0 \in E$ of $\W$} gives the \GAM-graph
$\W'=(\G',\alpha',\m')$ where $\G'=\G-e_0$ and
$\alpha'=\alpha\restr{E \setminus\{e_0\}}$; we write $\W'=\W-e_0$
for the \GAM-graph obtained in this way and will specify the weight $\m'$
later on.

Recall that $\G'=(V',E',\bd')$ is obtained from $\G=(V,E,\bd)$ by
deleting $e_0 \in E$, i.e.\ $V=V'$, $E'=E \setminus \{e_0\}$ and
$\bd'=\bd \restr{E' \times E'}$ (see \Def{deleting edges} and
Figure~\ref{subfig:a}--\ref{subfig:b}).  In particular, the inclusion
$\map \iota {\G'} {\G}$ is a graph homomorphism (see
\Exenum{graph-homo}{graph-homo:3}).

The next result is a generalisation of these ideas in the previous
articles to arbitrary vector potentials and weights.  Our results also
applies to the case when a loop or a multiple edge is deleted (see
e.g.\ \Remenum{cliques}{cliques.b}).

\begin{theorem}[General weights]
  \label{thm:delete-edge}
  Let $\W,\W'\in\Ga$ with $\W'=\W-e_0$ for some $e_0\in E=E(\W)$.
  \begin{enumerate}
  \item
    \label{delete-edge.a}
    If $w'_e\leq w_e$ for all edges $e\in E \setminus \{e_0\}$ and
    $w(v)\leq w'(v)$ for all
    $v \in V \setminus \{\bd_- e_0, \bd_+ e_0 \}$, then
    \begin{enumerate}
    \item
      \label{delete-edge.a1}
      $w(v)\leq w'(v)$ for $v \in \{\bd_- e_0, \bd_+ e_0\}$ implies
      $\W' \lesse \W$ (and hence $\W' \less \W $).
    \item
      \label{delete-edge.a2}
      $w(v)-w_{e_0}\leq w'(v)$ for $v \in \{\bd_- e_0, \bd_+ e_0\}$ and
      $\varrho_\infty\leq 1$ (maximal relative weight,
      see~\eqref{eq:rho.bdd}) implies $\W' \less[1] \W $.
  		
      Moreover, if $e_0$ is a loop and $\alpha_{e_0}=\pi$ then
      $\W'\less{} \W$.
    \end{enumerate}
  \item
    \label{delete-edge.b}
    If $w_e\leq w'_e$ for all edges $e\in E \setminus \{e_0\}$ and
    $w'(v)\leq w(v)$ for all $v\in V$, then $\W \less[1] \W' $.
  	
    Moreover, if $e_0$ is a loop with $\alpha_{e_0}=0$ then
    $\W \less{} \W'$.
  	
  \item
    \label{delete-edge.c}
    If $w_e=w'_e$ for all edges $e\in E \setminus \{e_0\}$,
    $w'(v)=w(v)$ for all $v\in V$ and if $e_0$ is a loop with
    $\alpha_{e_0}=0$ then $\W'\less \W \less \W'$, i.e.\ $\W$ and
    $\W'$ are isospectral.
  \end{enumerate}
\end{theorem}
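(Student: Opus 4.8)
The plan is to deduce the three cases from the machinery already assembled: \itemref{delete-edge.a1}, the loop part of~\itemref{delete-edge.b} and~\itemref{delete-edge.c} follow from $\GAM$-homomorphisms together with \Thm{homomorphism}, while every ``shifted'' statement comes from the Courant--Fischer min--max principle (\Thm{courant}) applied to a carefully truncated test space. Throughout I abbreviate the Dirichlet form $q_\W(\phi):=\normsqr[\lsqr{E,\m}]{d_\alpha\phi}$ and the mass form $m_\W(\phi):=\normsqr[\lsqr{V,\m}]\phi$, so that $\lambda_k(\W)$ is the minimum of $\max_{\phi\in U\setminus\{0\}}q_\W(\phi)/m_\W(\phi)$ over $k$-dimensional subspaces $U$, and similarly for $\W'$.

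For~\itemref{delete-edge.a1} the point is that under its hypotheses the inclusion $\map\iota{\G-e_0}\G$ of \Exenum{graph-homo}{graph-homo:3} satisfies $\m'(v)\ge\m(v)$ for \emph{every} $v\in V$ and $\m'_e\le\m_e$ for every $e\in E(\G-e_0)$; since $\alpha'=\alpha\restr{E(\G-e_0)}$ it is thus an $\GAM$-homomorphism $\W'\to\W$, whence $\W'\lesse\W$ and $\W'\less\W$. For the loop part of~\itemref{delete-edge.b} and for~\itemref{delete-edge.c} one uses that if $e_0$ is a loop with $\alpha_{e_0}=0$ then $(d_\alpha\phi)_{e_0}=\e^{\im\alpha_{e_0}/2}\phi(\bd_+e_0)-\e^{-\im\alpha_{e_0}/2}\phi(\bd_-e_0)=0$ for all $\phi$, so $d_\alpha$ and $d_{\alpha'}$ have identical components: in~\itemref{delete-edge.c}, where in addition $\m=\m'$, this forces $\Delta_\alpha=\Delta_{\alpha'}$ and hence isospectrality, and in~\itemref{delete-edge.b}, where $\m'_e\ge\m_e$ and $\m'\le\m$, it gives $q_\W(\phi)\le q_{\W'}(\phi)$ and $m_\W(\phi)\ge m_{\W'}(\phi)$, i.e.\ $\W\less\W'$ directly.

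The remaining bound $\W\less[1]\W'$ in~\itemref{delete-edge.b} I would get by min--max: let $S$ be spanned by the first $k+1$ eigenfunctions of $\Delta_{\alpha'}$ and put $T:=\set{\phi\in S}{(d_\alpha\phi)_{e_0}=0}$, of dimension $\ge k$. On $T$ the $e_0$-term disappears from $q$, so $q_\W(\phi)\le q_{\W'}(\phi)$ (the surviving edges obey $\m_e\le\m'_e$) while $m_\W(\phi)\ge m_{\W'}(\phi)$ (from $\m'\le\m$); therefore $q_\W(\phi)/m_\W(\phi)\le q_{\W'}(\phi)/m_{\W'}(\phi)\le\lambda_{k+1}(\W')$ on $T$, and min--max gives $\lambda_k(\W)\le\lambda_{k+1}(\W')$. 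If $e_0$ is a loop with $\alpha_{e_0}=0$ then the restriction is vacuous, $T=S$ has dimension $k+1$, and the same estimate yields the unshifted $\W\less\W'$.

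The main obstacle is~\itemref{delete-edge.a2}. First I would strip the inessential weight inequalities: by \Exenum{gam-homo}{ex:gam-homo1} the identity is an $\GAM$-homomorphism from $\W'$ to the graph $\widehat\W'$ built on $\G-e_0$ with edge weights $\m_e$, vertex weights $\m(v)$ off $\{\bd_-e_0,\bd_+e_0\}$, and $\m(\bd_\pm e_0)-\m_{e_0}$ at the two endpoints — a legitimate positive weight precisely because $\varrho_\infty\le 1$ forces $\m(\bd_\pm e_0)\ge\deg^\m(\bd_\pm e_0)\ge\m_{e_0}>0$. Thus $\W'\less\widehat\W'$, and it suffices to prove $\widehat\W'\less[1]\W$. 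Passing from $\widehat\W'$ to $\W$ re-adds $e_0$ (raising $q$ by the rank-one form $\phi\mapsto\m_{e_0}\abssqr{(d_\alpha\phi)_{e_0}}$) and simultaneously raises the two endpoint masses by $\m_{e_0}$. The plan is to run min--max for $\lambda_k(\widehat\W')$ on a $k$-dimensional test space inside $\set{\phi\in S}{\phi(\bd_+e_0)=0}$, where $S$ is spanned by the first $k+1$ eigenfunctions of $\W$; on this set, with $\delta:=\m_{e_0}\abssqr{\phi(\bd_-e_0)}$ (which satisfies $0\le\delta\le q_\W(\phi)$ and, again by $\varrho_\infty\le1$, $\delta\le\m(\bd_-e_0)\abssqr{\phi(\bd_-e_0)}\le m_\W(\phi)$), one has
\begin{equation*}
  q_{\widehat\W'}(\phi)=q_\W(\phi)-\delta
  \qquadtext{and}
  m_{\widehat\W'}(\phi)=m_\W(\phi)-\delta,
\end{equation*}
so everything hinges on verifying $\bigl(q_\W(\phi)-\delta\bigr)/\bigl(m_\W(\phi)-\delta\bigr)\le\lambda_{k+1}(\W)$ for $\phi\in S$. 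This step is where $\varrho_\infty\le 1$ is indispensable a second time (it gives $\sigma(\Delta_\alpha)\subset[0,2\varrho_\infty]\subset[0,2]$): without an upper bound on the eigenvalues the naive estimate only offsets a single endpoint mass drop and loses an extra shift — consistently with the sharpness noted in the paper. The refinement ``$e_0$ a loop at $v_0$ with $\alpha_{e_0}=\pi$ implies $\W'\less\W$'' is the transparent model of the mechanism: there $(d_\alpha\phi)_{e_0}=2\im\,\phi(v_0)$, so deleting $e_0$ lowers $q$ by $4\m_{e_0}\abssqr{\phi(v_0)}$ but lowers $m$ only by $\m_{e_0}\abssqr{\phi(v_0)}$; as every eigenvalue is $\le 2<4$, the larger numerator drop dominates and min--max over the first $k$ eigenfunctions of $\W$ (no truncation needed, since $v_0=\bd_-e_0=\bd_+e_0$) gives $\lambda_k(\W')\le\lambda_k(\W)$.
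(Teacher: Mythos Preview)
Your treatment of~\itemref{delete-edge.a1}, \itemref{delete-edge.b}, \itemref{delete-edge.c} and the loop refinement in~\itemref{delete-edge.a2} is correct and essentially matches the paper: the inclusion $\iota$ gives the \GAM-homomorphism for~\itemref{delete-edge.a1}; for~\itemref{delete-edge.b} the paper imposes the same codimension-one constraint $(d_\alpha\phi)_{e_0}=0$ (phrased there as $\phi\perp L(e_0)$ with $\psi=\m(\bd_+e_0)^{-1/2}\delta_{\bd_+e_0}-\e^{\im\alpha_{e_0}}\m(\bd_-e_0)^{-1/2}\delta_{\bd_-e_0}$) inside a max--min rather than your dual min--max; and your loop computations for $\alpha_{e_0}\in\{0,\pi\}$ are the same as the paper's.

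The genuine gap is in the main claim of~\itemref{delete-edge.a2}. Your codimension-one cut $\phi(\bd_+e_0)=0$ yields an \emph{equal} subtraction $\delta=\m_{e_0}\abssqr{\phi(\bd_-e_0)}$ from numerator and denominator, and the inequality $(q_\W(\phi)-\delta)/(m_\W(\phi)-\delta)\le\lambda_{k+1}(\W)$ that you say ``everything hinges on'' is \emph{false} in general once $\lambda_{k+1}(\W)>1$. Concretely, take $\G$ to be two triangles glued at a vertex $v_0$ with standard weights: the spectrum is $(0,\tfrac12,\tfrac32,\tfrac32,\tfrac32)$, and $\phi=(0,1,-1,0,0)$ (supported on one triangle, vanishing at $v_0$) is a $\tfrac32$-eigenfunction with $\phi(v_0)=0$. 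For $e_0$ the edge joining $v_0$ to the vertex where $\phi=1$ and $k+1=3$, one gets $q_\W=6$, $m_\W=4$, $\delta=1$, hence $(q_\W-\delta)/(m_\W-\delta)=\tfrac53>\tfrac32=\lambda_3(\W)$. So your test space $T$ does \emph{not} certify $\lambda_k(\widehat\W')\le\lambda_{k+1}(\W)$. (A secondary issue: your $\widehat\W'$ can acquire a zero vertex weight at a pendant endpoint, since $\varrho_\infty\le1$ only gives $\m(\bd_\pm e_0)\ge\m_{e_0}$, not strict inequality.)

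The paper's remedy is to choose a \emph{different} codimension-one constraint, namely $\phi\perp\psi'$ with
\[
  \psi'=\m(\bd_+e_0)^{-1/2}\delta_{\bd_+e_0}
       +\e^{\im\alpha_{e_0}}\m(\bd_-e_0)^{-1/2}\delta_{\bd_-e_0},
\]
which forces $\abs{\phi(\bd_+e_0)}=\abs{\phi(\bd_-e_0)}$ and, crucially, $\abssqr{(d_\alpha\phi)_{e_0}}=4\abssqr{\phi(\bd_\pm e_0)}$. The numerator then drops by $2\gamma^2$ and the denominator by $\gamma^2$ (with $\gamma^2:=2\m_{e_0}\abssqr{\phi(\bd_+e_0)}$), and the elementary inequality of Chen et~al.\ (\cite[Lemma~2.9]{chen:04}),
\[
  \frac{a^2-2\gamma^2}{b^2-\gamma^2}\le\frac{a^2}{b^2}
  \qquad\text{whenever }\frac{a^2}{b^2}\le 2,
\]
closes the argument --- this is precisely where the bound $\varrho_\infty\le1$ (i.e.\ Rayleigh quotients $\le 2$) enters. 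Your loop case $\alpha_{e_0}=\pi$ works for the same reason: there the ratio of drops is $4{:}1$ (you observed $4{:}1$ against the mass drop $\m_{e_0}\abssqr{\phi(v_0)}$), and $(a^2-4\mu)/(b^2-\mu)\le a^2/b^2$ already follows from $a^2/b^2\le 4$. The non-loop case genuinely needs the $2{:}1$ constraint $\phi\perp\psi'$, not your $1{:}1$ constraint $\phi(\bd_+e_0)=0$.
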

\begin{proof}
  Let $\W=(\G,\alpha,\m)$ and $\W=(\G',\alpha',m')$ be two \GAM-graph
  with $\G'=\G-e_0$, $\alpha'=\alpha\restr{E(\G')}$ and note that
  $\card{\W}=\card{\W'}$.  \itemref{delete-edge.a1}~To show
  $\W'\lesse\W$ just observe that the inclusion
  $\iota\colon \W'\rightarrow \W$ is \aGAM-homomorphism, hence
  $\W' \lesse \W$ and therefore $\W'\less \W$ by \Thm{homomorphism}.
	
  \itemref{delete-edge.a2}~For the relation $\W'\less[1]\W$, we have
  (using \Thm{courant} twice)
  \begin{align*}
    \lambda_{k}(\W')
    &=\max_{S \in \mathcal S_{k-1}} \min_{\substack{\phi\perp S \\ \phi\neq 0}} 
    \frac{\normsqr[\lsqr {E',w'}] {d_{\alpha'} \phi}}
    {\normsqr[\lsqr{V',w'}] \phi}\\
    &\le \max_{S \in \mathcal S_{k-1}} \min_{\substack{\phi\perp S \\ \phi\neq 0}} 
    \frac{\normsqr[\lsqr {E,w}] {d_\alpha \phi}-|{(d_\alpha \phi)_{e_0}}|^2w_{e_0}}
    {\normsqr[\lsqr{V,w}] \phi - \left(\abssqr{\phi(\bd_-e_0)} +
    \abssqr{\phi(\bd_+e_0)}\right) w_{e_0}}\\
    &\le \max_{S \in \mathcal S_{k-1}} \min_{\substack{\phi\perp S \cup L'(e_0) \\ \phi\neq 0}} 
    \frac{\normsqr[\lsqr {E,w}] {d_\alpha \phi}-\abssqr{(d_\alpha \phi)_{e_0}}w_{e_0}}
    {\normsqr[\lsqr{V,w}] \phi - \left( \abssqr{\phi(\bd_-e_0)} +
    \abssqr{\phi(\bd_+e_0)}\right)w_{e_0} }\\              
    &= \max_{S \in \mathcal S_{k-1}} \min_{\substack{\phi\perp S \cup L'(e_0) \\ \phi\neq 0}} 
    \frac{\normsqr[\lsqr E] {d_\alpha \phi}-4\abssqr{\phi(\bd_+e_0)}w_{e_0}}
    {\normsqr[\lsqr{V,w}] \phi - 2 \abssqr{\phi(\bd_+e_0)}w_{e_0}}\\
    &\le \max_{S \in \mathcal S_{k-1}} \min_{\substack{\phi\perp S \cup L'(e_0) \\ \phi\neq 0}} 
    \frac{\normsqr[\lsqr {E,w}] {d_\alpha \phi}}
    {\normsqr[\lsqr{V,w}] \phi}\\
    &\le \max_{S \in \mathcal S_{k}} \min_{\substack{\phi\perp S \\ \phi\neq 0}} 
    \frac{\normsqr[\lsqr {E,w}] {d_\alpha \phi}}
    {\normsqr[\lsqr{V,w}] \phi}
    =\lambda_{k+1}(\W),
  \end{align*}
  for $k=1,\dots, n-1$, where $L'(e_0)=\C \psi'$ denotes the linear
  space generated by
  \begin{equation}
    \label{eq:def.psi'}
    \psi' 
    = \frac1{(w(\bd_+{e_0}))^{1/2}}\delta_{\bd_+{e_0}}
    +\e^{\im\alpha_{e_0}} \frac 1{(w(\bd_-{e_0}))^{1/2}}
    \delta_{\bd_-{e_0}}.
  \end{equation}
  We use the vertex weight inequality $w'(v) \ge w(v)-w_{e_0}$
  in the second line; we use the fact that $\phi \perp L'(e_0)$
  implies
  $\abssqr{(d_\alpha \phi)_{e_0}}=4\abssqr{\phi(\bd_+e_0)}$ and
  $\abssqr{\phi(\bd_-e_0)}=\abssqr{\phi(\bd_+e_0)}$ in the
  fourth line; and for the fifth line, we use the following
  inequality between real numbers $a$, $b$ and $\gamma$
  (see~\cite[Lemma~2.9]{chen:04}), namely
  \begin{equation*}
    a^2-2\gamma^2\geq 0, 
    \qquad b^2-\gamma^2>0 \qquadtext{and}
    \frac{a^2}{b^2}\leq 2
    \qquadtext{implies}
    \frac{a^2-2\gamma^2}{b^2-\gamma^2}\leq \frac{a^2}{b^2}.
  \end{equation*}
  We also used the fact that $\rho_\infty\leq 1$ as $a^2/b^2$ is
  the Rayleigh quotient for the graph $\W$ and hence $a^2/b^2 \le 2$.
  
	The proof of the second part of \itemref{delete-edge.a2} is
        similar to the previous.  We observe that
        $(d_\alpha \phi)_{e_0}=0$ if $\alpha_{e_0}=\pi$ for a loop
        $e_0$, and $\psi'=0$ in~\Eq{eq:def.psi'}.  In particular, we
        do not have to introduce the function $\psi'$, hence
        $\lambda_k(\W') \le \lambda_k(\W)$.
	
	\itemref{delete-edge.b}~Using \Thm{courant} twice we
	obtain
	\begin{align*}
	\lambda_{k+1}(\W')
	&=\min_{S \in \mathcal S_{n-(k+1)}} \max_{\substack{\phi\perp S \\ \phi\neq 0}} 
	\frac{\normsqr[\lsqr{E',w'}] {d_{\alpha'} \phi}}
	{\normsqr[\lsqr{V',w'}] \phi}\\
	&\geq \min_{S \in \mathcal S_{n-(k+1)}}
	\max_{\substack{\phi\perp S \cup L(e_0) \\ \phi\neq 0}}
	\frac{\normsqr[\lsqr {E',w'}] {d_{\alpha'} \phi}}
	{\normsqr[\lsqr{V', w'}] \phi}\\
	&= \min_{S \in \mathcal S_{n-(k+1)}}
	\max_{\substack{\phi\perp S \cup L(e_0) \\ \phi\neq 0}}
	\frac{\normsqr[\lsqr {E,w}] {d_\alpha \phi}}
	{\normsqr[\lsqr{V,w}] \phi}\\
	&\geq \min_{S \in \mathcal S_{n-k}} \max_{\substack{\phi\perp S \\ \phi\neq 0}}
	\frac{\normsqr[\lsqr {E,w}] {d_\alpha \phi}}
	{\normsqr[\lsqr{V,w}] \phi}
	=\lambda_{k}(\W)\;,
	\end{align*}
	for $k =1,\dots, n-1$, where $L(e_0)=\C \psi$ denotes the linear
	space generated by
	\begin{equation}
	\label{eq:def.psi}
	\psi 
	= \frac1{(w(\bd_+{e_0}))^{1/2}}\delta_{\bd_+{e_0}}
	-\e^{\im\alpha_{e_0}} \frac 1{(w(\bd_-{e_0}))^{1/2}}\delta_{\bd_-{e_0}}
	\end{equation}
	for the canonical orthonormal basis $(\delta_v)_v$ of
	$\lsqr{V,w}$; and where we used the fact that
	$(d_\alpha \phi)_{e_0}=0$ if $\phi \perp L(e_0)$, hence we can just
	take the norm over $E'$ instead of $E$ for the second equality.
	
	The proof of the second part \itemref{delete-edge.b} is
        similar to the previous, we observe that
        $(d_\alpha \phi)_{e_0}=0$ if $\alpha_{e_0}=0$ for a loop
        $e_0$, and $\psi=0$ in~\Eq{eq:def.psi}.  In particular, we do
        not have to introduce the function $\psi$, hence
        $\lambda_k(\W') \le \lambda_k(\W)$.
	
	\itemref{delete-edge.c}~From part \itemref{delete-edge.a1} we
        conclude that $\W'\less\W$ and from part
        \itemref{delete-edge.b} $\W\less\W'$ follows; finally, observe
        that $G$ and $G'$ have the same number of vertices; hence
        $\W$ and $\W'$ are isospectral.
\end{proof}

The above theorem generalises some known interlacing results,
namely~\cite[Lemma~2]{heuvel:95} (combinatorial Laplacian and its
signless version, see also~\cite[Theorem~3.2]{mohar:91}
and~\cite[Corollary~3.2]{fiedler:73}) and~\cite[Theorem~2.3]{chen:04}
(standard Laplacian) and~\cite[Theorem~8]{atay-tuncel:14} (signed
standard Laplacians).

We state these cases now for standard and combinatorial weights as a
corollary:
\begin{corollary}
  \label{cor:delete-edge}
  Let $\W,\W'\in\Ga$ where $\W'=\W-e_0$ for some $e_0\in E(\W)$.
  \begin{enumerate}
  \item
    \label{cor.delete-edge.b}
    If $\W,\W'\in\Co$, then $\W \less[1] \W'$ and $\W' \lesse \W$,
    hence $\W \less[1] \W' \less \W$.  Furthermore, if $e_0$ is not a
    loop, then there exists $1\leq k\leq\card{ \W}$ such that
    $\lambda_k(\W')<\lambda_k(\W)$.
  \item
    \label{cor.delete-edge.a}
    If $\W,\W'\in\De$, then $\W \less[1] \W' \less[1] \W$.
  \end{enumerate}
\end{corollary}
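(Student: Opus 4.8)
The plan is to deduce both items from \Thm{delete-edge} by specialising the vertex and edge weights; the only genuinely new ingredients are the (trivial) verification of its weight hypotheses, a rank-one perturbation argument for the strict-inequality clause, and a short direct estimate to cover a deleted loop in the standard case.

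\textbf{Combinatorial weights.} Here $w(v)=w'(v)=1$ on all vertices and $w_e=w'_e=1$ on all edges of $\G-e_0$, so every weight (in)equality occurring in \Thm{delete-edge} holds with equality. Hence \Thmenum{delete-edge}{delete-edge.a1} gives $\W'\lesse\W$, and therefore $\W'\less\W$ by \Thm{homomorphism}, while \Thmenum{delete-edge}{delete-edge.b} gives $\W\less[1]\W'$; chaining these yields $\W\less[1]\W'\less\W$, valid for any $e_0$ (loop or not). For the strict-inequality clause I would use that, since $\W$ and $\W'$ have the same vertex set and the same (combinatorial) vertex weight, both $\Delta_\alpha(\W)$ and $\Delta_{\alpha'}(\W')$ act on the same Hilbert space $\lsqr{V,w}$, and on it
\begin{equation*}
  \bigiprod{\bigl(\Delta_\alpha(\W)-\Delta_{\alpha'}(\W')\bigr)\phi}{\phi}
  = w_{e_0}\,\abssqr{(d_\alpha\phi)_{e_0}} \ge 0 ,
\end{equation*}
so $\Delta_\alpha(\W)=\Delta_{\alpha'}(\W')+R$ with $R\ge0$ of rank at most one. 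Thus $\lambda_k(\W')\le\lambda_k(\W)$ for every $k$ (recovering $\W'\less\W$), while $\sum_k\bigl(\lambda_k(\W)-\lambda_k(\W')\bigr)=\tr R=\sum_v w_{e_0}\abssqr{(d_\alpha\delta_v)_{e_0}}=2w_{e_0}=2$ whenever $e_0$ is not a loop. These two facts are incompatible with $\lambda_k(\W')=\lambda_k(\W)$ holding for all $k$ (that would force the traces to agree, contradicting \Lemenum{basic}{basic:5}); hence $\lambda_k(\W')<\lambda_k(\W)$ for at least one $1\le k\le\card\W$.

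\textbf{Standard weights, $e_0$ not a loop.} In $\De$ the weight of $\W'$ is $w'(v)=\deg^{\G-e_0}(v)$, which equals $\deg^\G(v)-1=w(v)-w_{e_0}$ at each of the two distinct endpoints of $e_0$ and $w(v)$ at every other vertex, while the edge weights on $\G-e_0$ are unchanged; moreover a standard weight is normalised, so $\varrho_\infty=1$. Hence the hypotheses of \Thmenum{delete-edge}{delete-edge.a2} are met (with equality in the endpoint condition and $\varrho_\infty\le1$), giving $\W'\less[1]\W$; and $w'(v)\le w(v)$ for all $v$ together with $w_e\le w'_e$ on $\G-e_0$ gives $\W\less[1]\W'$ by \Thmenum{delete-edge}{delete-edge.b}. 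Therefore $\W\less[1]\W'\less[1]\W$. (Should an endpoint of $e_0$ become isolated, one invokes the convention of \Rem{std.deg.zero}.)

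\textbf{Standard weights, $e_0$ a loop.} This is the one situation not covered verbatim by \Thm{delete-edge}, since the standard weight now drops by $2$ at the single loop vertex $v$, so $w(v)-w_{e_0}>w'(v)$ and \Thmenum{delete-edge}{delete-edge.a2} is not available; I expect this to be the main obstacle. It can be settled by the same kind of min--max bookkeeping as in the proof of \Thm{delete-edge}: a loop at $v$ contributes to $d_\alpha\phi$ only a multiple of $\phi(v)$, and $w'$ differs from $w$ solely at $v$, so the Rayleigh quotients of $\W$ and of $\W'$ coincide on the hyperplane $\{\phi\in\lsqr{V,w}:\phi(v)=0\}$. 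Intersecting a span of the first $k+1$ eigenfunctions of $\Delta_\alpha(\W)$ (respectively of $\Delta_{\alpha'}(\W')$) with this hyperplane, which costs at most one dimension, and feeding the result into~\Eq{eq:courant2}, produces $\lambda_k(\W')\le\lambda_{k+1}(\W)$ and, symmetrically, $\lambda_k(\W)\le\lambda_{k+1}(\W')$, i.e.\ $\W\less[1]\W'\less[1]\W$ once more. For combinatorial weights this detour is unnecessary, since there the endpoint weights are unchanged and \Thmenums{delete-edge}{delete-edge.a1}{delete-edge.b} already apply to loops as well.
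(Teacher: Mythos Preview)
Your proof is correct and, for the combinatorial case and the non-loop standard case, follows exactly the route the paper takes: invoke \Thmenums{delete-edge}{delete-edge.a1}{delete-edge.b} (respectively \Thmenums{delete-edge}{delete-edge.a2}{delete-edge.b}) and, for the strict-inequality clause, compare traces.  Your rank-one formulation of the latter is just a repackaging of the paper's one-line trace computation
\[
  \sum_k \lambda_k(\W)=\tr\Delta_\alpha=\sum_{v}\deg^{\G}(v)
  >\sum_{v}\deg^{\G'}(v)=\tr\Delta_{\alpha'}=\sum_k\lambda_k(\W'),
\]
together with \Lemenum{basic}{basic:5}; the content is the same.

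Where you differ from the paper is the loop case of~\itemref{cor.delete-edge.a}.  The paper simply writes $\deg^{\G'}(v)=\deg^\G(v)-1$ for $v=\bd_\pm e_0$ and appeals to \Thmenum{delete-edge}{delete-edge.a2}; as you noticed, this is literally only valid when $e_0$ is not a loop, since a loop lowers the degree by~$2$ and the stated endpoint hypothesis $w(v)-w_{e_0}\le w'(v)$ then fails.  (Inspecting the proof of \Thmenum{delete-edge}{delete-edge.a2}, what is actually used at a loop is $w'(v)\ge w(v)-2w_{e_0}$, which \emph{does} hold, so the issue is one of formulation rather than substance.)  Your hyperplane argument via $\{\phi:\phi(v)=0\}$ closes this gap cleanly and independently: on that subspace both Rayleigh quotients coincide, and losing at most one dimension to the constraint gives $\lambda_k(\W')\le\lambda_{k+1}(\W)$ and $\lambda_k(\W)\le\lambda_{k+1}(\W')$ by the min--max principle.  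So here you are in fact more careful than the paper.
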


\begin{proof}
  \itemref{cor.delete-edge.b}~By \Thmenum{delete-edge}{delete-edge.a}
  we have $\W' \lesse \W $ (and hence $\W' \less \W $) and
  by~\Thmenum{delete-edge}{delete-edge.b} we conclude
  $\W \less[1]\W'$. For the second part, note that
  \begin{equation*}
    \sum_{k=1}^n \lambda_k(\W)
    = \tr(\Delta_\alpha)
    = \sum_{v\in V} \deg^{G}(v)
    >\sum_{v\in V'} \deg^{G'}(v)
    = \tr(\Delta_{\alpha'}) 
    = \sum_{k=1}^n \lambda_k(\W'),
  \end{equation*} 

  hence there exists an index $k \in \{1,\dots,n\}$ such that
  $\lambda_k(\W')<\lambda_k(\W)$. 

  \itemref{cor.delete-edge.a}~For $\W \less[1] \W'$ we use
  \Thmenum{delete-edge}{delete-edge.b}.  Moreover,
  $\deg^{G'}(v)=\deg^{G}(v)-1$ for $v=\bd_\pm {e_0}$ and
  $\deg^{G'}(v)=\deg^{G}(v)$ for all other vertices, hence
  by~\Thmenum{delete-edge}{delete-edge.a2} it follows that
  $\W' \less[1]\W$.
\end{proof}

\begin{remark}
  \indent
  \begin{enumerate}
  \item Note that part \itemref{cor.delete-edge.b} of the preceding
    corollary is sharp in the sense that one cannot lower the shift to
    the value $0$ (except in the trivial case when we delete a loop
    edge without magnetic potential, see
    \Corenum{delete-edge-loop}{cor.delete-edge-loop.b'}).  For example
    $\W \less \W'$ is false for combinatorial weights in \Ex{deledge}.
    Similarly, one can find counterexamples with standard weights,
    showing that one can not lower the values of the shifts in part
    \itemref{cor.delete-edge.a} either (see e.g.\ the graph presented
    in~Fig.~1 of~\cite{atay-tuncel:14}).
  
  \item The number $t(\G)$ of spanning trees of a combinatorial graph
    can be computed in terms of the spectrum of the Laplacian (without
    magnetic potential) by the formula
    \begin{equation*}
      t(\G)
      =\frac{1}{|G|}\prod_{i=2}^{|G|}\lambda_i(\W)
    \end{equation*}
    using the matrix-tree theorem (see, e.g.~\cite[Corollary~4.2]{mohar:91}
    and references therein or~\cite[Theorem~4.11]{bap:10}).  If $\W'$
    is obtained from $\W$ by edge deletion it is immediate that
    \begin{equation*}
      \W\less\W' \implies  t(\W)\leq t(\W').
    \end{equation*}
  \end{enumerate}
\end{remark}
Another simple consequence for spanning subgraphs is given in
\Cor{spanning.subgraph}.

If we delete a loop, we can slightly improve the previous corollary:
\begin{corollary}
  \label{cor:delete-edge-loop}
  Let $\W,\W'\in\Ga$ with $\W'=\W-e_0$ for a loop $e_0\in E(\W)$.
  \begin{enumerate}
  \item
    \label{cor.delete-edge-loop.b'}
    If $\W,\W'\in\Co$, and $\alpha_{e_0}=0$ then
    $\W'\less \W \less \W'$, i.e.\ $\W$ and $\W'$ are isospectral.
  \item
    \label{cor.delete-edge-loop.a'}
    If $\W,\W'\in\De$ and $\alpha_{e_0}=0$ then $\W'\less \W$ and if
    $\alpha_{e_0}=\pi$ then $\W \less \W'$.
  \end{enumerate}
\end{corollary}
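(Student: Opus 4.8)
The plan is to obtain all three assertions directly from \Thm{delete-edge}, specialised to combinatorial and to standard weights; the only point requiring care is the loop-counting convention, which I record first. A loop $e_0$ at a vertex $v_0$ is incident to $v_0$ through \emph{both} of its half-edges, so $\deg^\G(v_0)$ decreases by $2$ (not $1$) when $e_0$ is deleted; moreover, since $\bd_- e_0 = \bd_+ e_0 = v_0$, one has $(d_\alpha\phi)_{e_0} = \bigl(\e^{\im\alpha_{e_0}/2} - \e^{-\im\alpha_{e_0}/2}\bigr)\phi(v_0)$, which vanishes exactly when $\alpha_{e_0} = 0$ and equals $\pm 2\im\,\phi(v_0)$ when $\alpha_{e_0} = \pi$.

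For~\itemref{cor.delete-edge-loop.b'} I would note that with combinatorial weights the deletion of $e_0$ changes neither the vertex weights (identically $1$) nor the edge weights on $E\setminus\{e_0\}$ (also identically $1$); hence the hypotheses ``$w_e = w'_e$ on $E\setminus\{e_0\}$, $w(v) = w'(v)$ on $V$, and $e_0$ a loop with $\alpha_{e_0} = 0$'' of \Thmenum{delete-edge}{delete-edge.c} are met, and one concludes $\W'\less\W\less\W'$, i.e.\ isospectrality. (Alternatively, \eqref{eq:disc.lapl} shows that with combinatorial weights such a loop contributes nothing to $\Delta_\alpha$, so $\Delta_\alpha$ is literally unchanged.)

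For~\itemref{cor.delete-edge-loop.a'} I would put $v_0 = \bd_- e_0 = \bd_+ e_0$ and record that with standard weights $w_e = w'_e = 1$ on $E\setminus\{e_0\}$, $w'(v) = w(v)$ for $v\neq v_0$, $w'(v_0) = \deg^{\G'}(v_0) = \deg^{\G}(v_0) - 2 = w(v_0) - 2w_{e_0}$, and $\varrho_\infty = 1$. If $\alpha_{e_0} = 0$, these data verify the hypotheses of \Thmenum{delete-edge}{delete-edge.b} ($w_e\le w'_e$ off $e_0$, $w'\le w$ on $V$, $e_0$ a loop with $\alpha_{e_0} = 0$), whose ``loop'' conclusion then gives the asserted relation. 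If $\alpha_{e_0} = \pi$, the relevant vertex-weight inequality of \Thmenum{delete-edge}{delete-edge.a2} — which for a loop reads $w(v_0) - 2w_{e_0}\le w'(v_0)$, the loop being counted twice at $v_0$ as in the proof of that part — holds here with equality, and together with $\varrho_\infty\le 1$ this verifies the hypotheses of that part, whose ``loop'' conclusion then gives the asserted relation. In the degenerate case that $v_0$ carries no edge besides $e_0$, I would additionally invoke the convention of \Rem{std.deg.zero}, so that $v_0$ still contributes an eigenvalue $0$ to $\W'$.

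I do not expect a substantial obstacle: the statement is a routine specialisation of \Thm{delete-edge}. The main thing to get right — and the only place a slip would be easy — is the factor $2$ in $w'(v_0) = w(v_0) - 2w_{e_0}$, i.e.\ that a loop is incident to its vertex twice; it is precisely this that makes the pertinent vertex-weight inequality in \Thm{delete-edge} tight for a loop, which is what lets the spectral shift collapse to $0$ and is thus the improvement over \Cor{delete-edge}.
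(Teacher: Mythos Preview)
Your proposal follows the paper's approach exactly: \Thmenum{delete-edge}{delete-edge.c} for part~\itemref{cor.delete-edge-loop.b'}, and the loop clauses of \Thmenum{delete-edge}{delete-edge.b} and~\itemref{delete-edge.a2} for the two halves of part~\itemref{cor.delete-edge-loop.a'}. The explicit hypothesis checks you add (the factor-of-$2$ adjustment $w'(v_0)=w(v_0)-2w_{e_0}$ at the loop vertex and the appeal to \Rem{std.deg.zero} in the degenerate case) are useful detail but do not constitute a different route.
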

\begin{proof}
  \itemref{cor.delete-edge-loop.b'}~follows
  from \Thmenum{delete-edge}{delete-edge.c}.
  \itemref{cor.delete-edge-loop.a'} If $\alpha_{e_0}=0$ it follows from
  \Thmenum{delete-edge}{delete-edge.b} that $\W'\less \W$; and if
  $\alpha_{e_0}=\pi$ it follows from
  \Thmenum{delete-edge}{delete-edge.a2} that $\W \less \W'$.
\end{proof}

\begin{example}
  \label{ex:deledge}
  For $t\in[0,2\pi]$ we consider the \GAM-graph $\W_t\in\Co^t$ defined
  by $\G$ in Figure~\ref{subfig:a}.  We orient the edges along the
  closed path such that the flux through it adds up to $t$.  The
  spectrum $\sigma(\W_t)$ consists of five eigenvalues plotted as a
  solid line in Figure~\ref{subfig:c} (the spectrum depends of the
  value $t$).  Let $\W'_t=\W - e_0$ with combinatorial weights, i.e.\
  $\W'_t \in\Co^t$ (see Figure~\ref{subfig:b}).  Since $\G'$ is a
  tree, we have $\sigma(\W'_t)=\sigma(\W'_0)$ for all $t$.  In
  particular, $\sigma(\W'_0)$ consists of five eigenvalues (dotted
  lines in Figure~\ref{subfig:c}).  From
  \Corenum{delete-edge}{cor.delete-edge.b} we conclude
  $\W_t \less[1] \W'_0 \less \W_t$.  In particular
  \begin{equation*}
    \sigma(\W'_0)
    =
    \left(
      0, 
      \frac{1}{2} \left(3-\sqrt{5}\right),
      \frac{1}{2} \left(5-\sqrt{5}\right), 
      \frac{1}{2} \left(\sqrt{5}+3\right),
      \frac{1}{2} \left(\sqrt{5}+5\right) \\
    \right)
    \approx
    (0, 0.381966, 1.38197, 2.61803, 3.61803),
  \end{equation*} 
  hence we can localise the spectrum of $\sigma(\W'_t)$ for
  any $t\in[0,2\pi]$, i.e.\ 
  $\lambda_i(\W_t) \in [\lambda_{i}(\W'), \lambda_{i+1}(\W')]$ for
  $i=1,2,3 \text{ and } 4$.
  \begin{figure}[h]\label{fig:deledge}
    \subfloat[\label{subfig:a}]{ 
      \begin{tikzpicture}[baseline,
        vertex/.style={circle,draw,fill, thick, inner sep=0pt,minimum
          size=1mm},scale=.5]
        
        \node (1) at (0,3) [vertex,label=below:] {};
        \node (2) at (2,3) [vertex,label=below:] {};
        \node (3) at (4,5) [vertex,label=left:] {};
        \node (4) at (4,1) [vertex,label=below:] {};
        \node (5) at (6,3) [vertex,label=left:] {};
        
        \draw[-](1) edge node[below] {} (2);
        \draw[-](2) edge node[left] {$e_0$} (3);
        \draw[-](2) edge node[below] {} (4);
        \draw[-](3) edge node[right] {$e_1$} (4);
        \draw[-](3) edge node[below] {} (5);
        
      \end{tikzpicture} } 
    \subfloat[\label{subfig:b}]%
    { 
      \begin{tikzpicture}[baseline,
        vertex/.style={circle,draw,fill, thick, inner sep=0pt,minimum
          size=1mm},scale=.5]
        
        \node (1) at (0,3) [vertex,label=below:] {};
        \node (2) at (2,3) [vertex,label=below:] {};
        \node (3) at (4,5) [vertex,label=left:] {};
        \node (4) at (4,1) [vertex,label=below:] {};
        \node (5) at (6,3) [vertex,label=left:] {};
        
        \draw[-](1) edge node[below] {} (2);
        \draw[-](2) edge node[below] {} (4);
        \draw[-](3) edge node[right] {$e_1$} (4);
        \draw[-](3) edge node[below] {} (5);
        
      \end{tikzpicture}   }
    \subfloat[\label{subfig:c}]
    {\includegraphics[width=8cm,height=5cm]{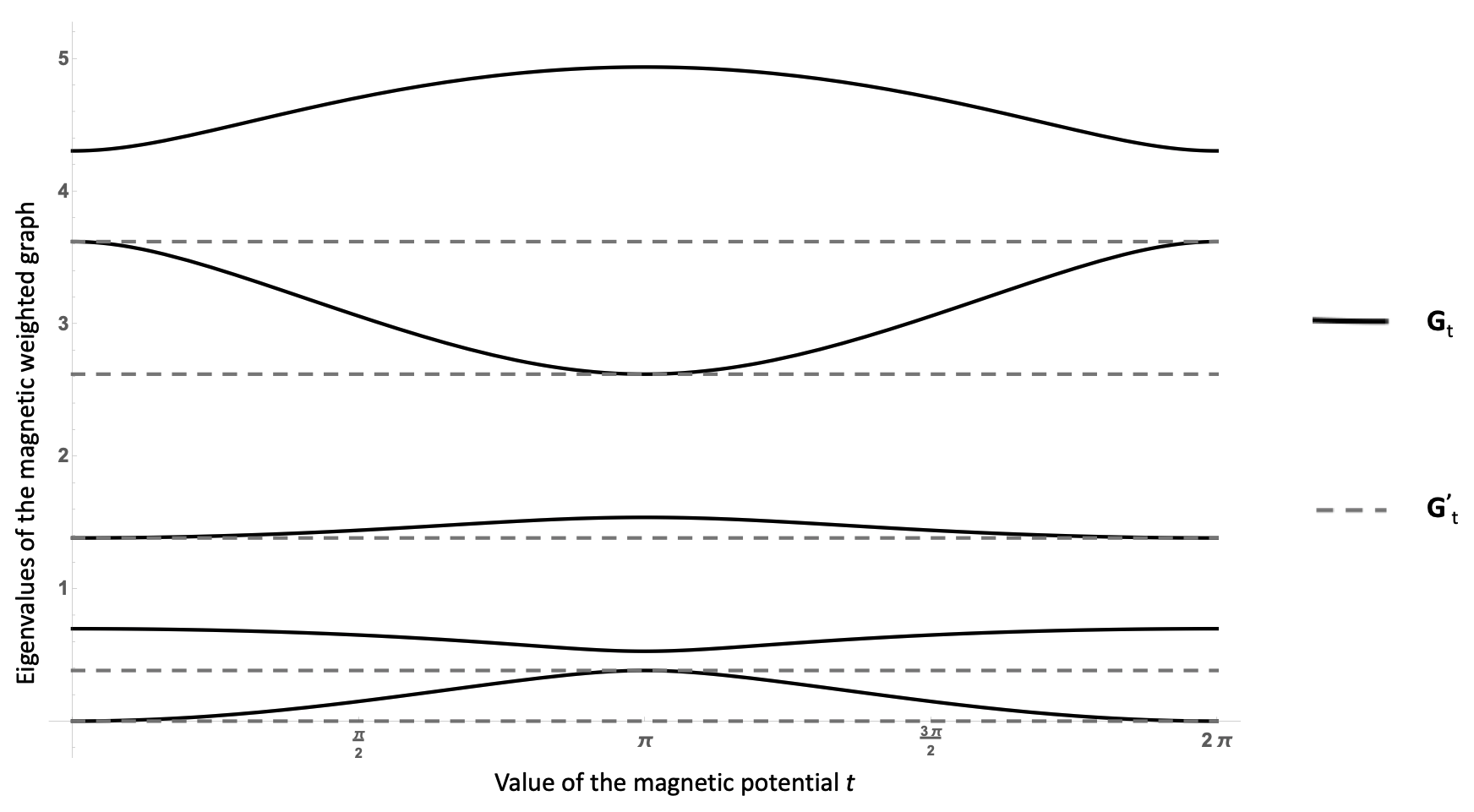}}
    \caption{If we delete the edge $e_0$ from the graph $\G$ in
      Figure~\ref{subfig:a}, we obtain the graph $\G'=\G - e_0$ in
      Figure~\ref{subfig:b}.  Let $\W_t$ (respectively $\W_t'$) be in
      $\Co^t$ with underlying graphs $\G$ (respectively, $\G'$).  In
      Figure~\ref{subfig:c} we plot $\sigma(\W_t)$ (respectively,
      $\sigma(\W_t')$) as a solid (respectively, dashed) line for all
      $t\in[0,2\pi]$. Note that $\W_t\less[1]\W'_t\less \W_t$, i.e.,
      the eigenvalues interlace.}
    \label{fig:1}
  \end{figure}
\end{example}

\subsubsection{Contracting vertices}
\label{subsec:vert-contra}
Let $\W=(\G,\alpha,\m)$ be \aGAM-graph, a \emph{vertex contraction}
of $\W$ is the \GAM-graph $\wt \W=(\wt\G,\wt\alpha,\wt\m)$ where
$\wt \G=\G/ \{v_1,v_2\}$ for two different vertices $v_1,v_2\in V(\G)$
(see \Def{glueing-vertices} and \Fig{vercon}); we specify the weight
$\wt \m$ later.  Recall that $\wt \G$ is obtained from $\G$ by contracting
the vertices $v_1$ and $v_2$ to one vertex
$\wt v_0=[v_1]=[v_2]=\{v_1,v_2\}$ while keeping all edges and vector
potentials.  Then the quotient map $\map \kappa {\G}{\wt \G}$ is
a graph homomorphism and preserves the magnetic potential (see
\Exenum{gam-homo}{ex:gam-homo2}).  We also write
$\wt \W=\W / \{v_1,v_2\}$.  We would like to stress that contracting two
\emph{adjacent} vertices $v_1,v_2$ turns any edge in $E(v_1,v_2)$ into
a loop in $\G/\{v_1,v_2\}$ (see also \Rem{vx-contr-convention} for
further cases).

\begin{figure}[h]
\centering
  \subfloat[{}\label{subfig:1}]{     \begin{tikzpicture}[baseline, vertex/.style={circle,draw,fill, thick,
                         inner sep=0pt,minimum size=1mm},scale=.5]
                        
             \node (1) at (0,3) [vertex,label=below:$v_0$] {};
             \node (2) at (2,3) [vertex,fill=red,label=below:{\color{red}{$v_2$}}] {};
             \node (3) at (4,5) [vertex,label=left:] {};
             \node (4) at (4,1) [vertex,label=below:] {};
             \node (5) at (6,3) [vertex,label=left:] {};
             \node (7) at (2,1) [vertex,fill=red,label=left:{\color{red}{$v_1$}}] {};
                
       	    \draw[-](1) edge node[below] {} (2);
       	  	\draw[-](2) edge node[left] {} (3);
       	  	\draw[-](7) edge node[below] {} (4);
       	  	\draw[-](3) edge node[right] {$e_0$} (4);
       	  	\draw[-](3) edge node[below] {} (5);
       	  	\draw[-](3) edge node[below] {} (5);
         \end{tikzpicture}\qquad }
       \subfloat[{}\label{subfig:2}]{
         \begin{tikzpicture}[baseline,
           vertex/.style={circle,draw,fill, thick, inner
             sep=0pt,minimum size=1mm},scale=.5]
           
           \node (1) at (0,3) [vertex,label=below:] {};
           \node (2) at (2,3) [vertex,fill=red,label=below:{\color{red}{$[v_1]$}}] {};
           \node (3) at (4,5) [vertex,label=left:] {};
           \node (4) at (4,1) [vertex,label=below:] {};
           \node (5) at (6,5) [vertex,label=left:] {};
           
           \draw[-](1) edge node[below] {} (2);
           \draw[-](2) edge node[left] {} (3);
           \draw[-](2) edge node[left] {} (4);
           \draw[-](3) edge node[right] {$e_0$} (4);
           \draw[-](3) edge node[below] {} (5);
           
           \end{tikzpicture}  }  
         \subfloat[{}\label{subfig:3}]{ \includegraphics[width=8cm,height=5cm]{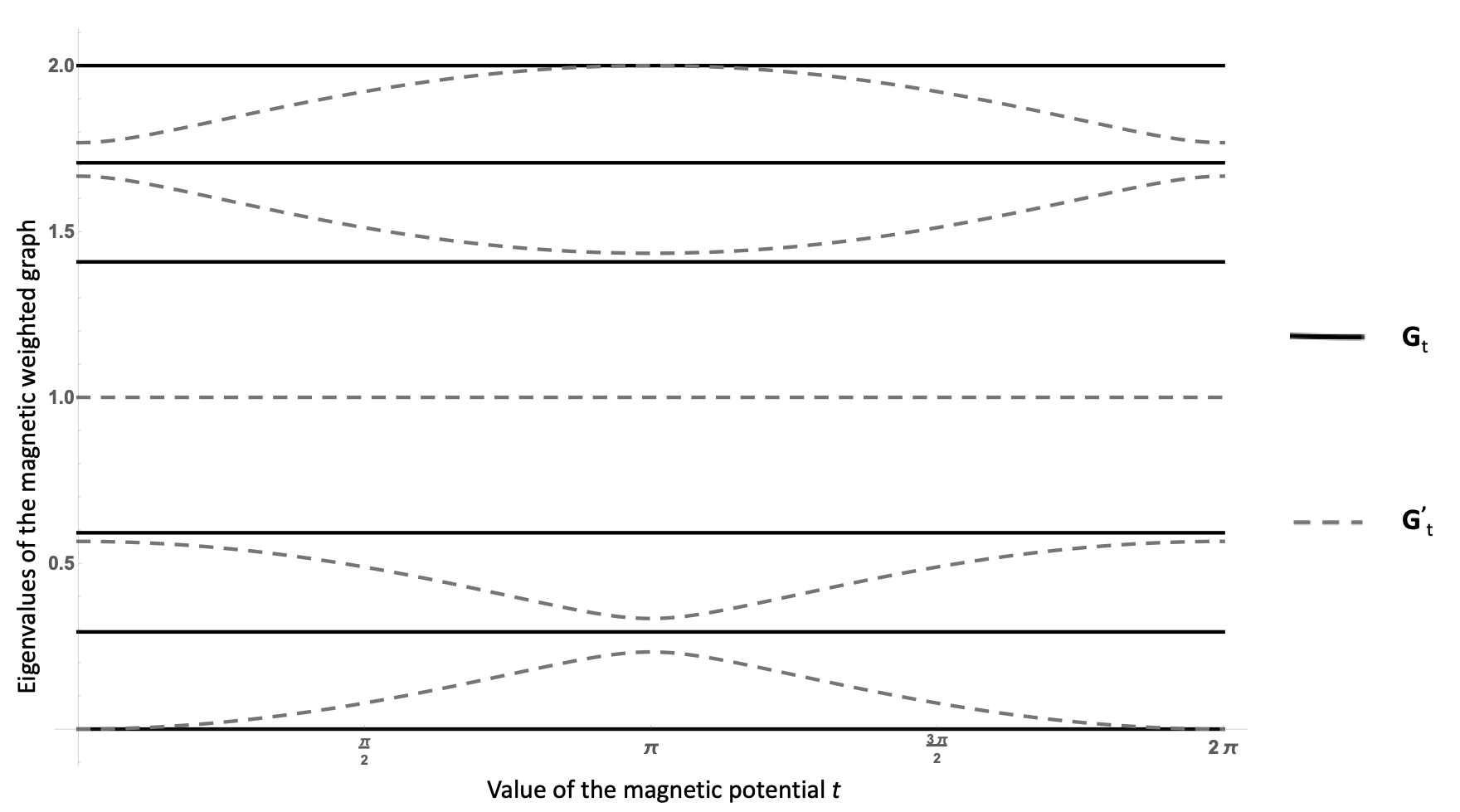}}
         \caption{Contracting the vertices $v_1$ and $v_2$ of the
           graph $\G$ in~\ref{subfig:1} gives the graph
           $\wt\G=\G/\{v_1,v_2\}$ in Figure~\ref{subfig:2}. Let
           $\W_t,\wt \W_t\in\Co^t$ defined by $\G$ (respectively, $\wt \G$), then in
           Figure~\ref{subfig:3} we plot as dashed lines
           $\sigma(\W_t)$ and as solid lines $\sigma(\wt \W_t)$ for
           $t\in[0,2\pi]$.}
\label{fig:vercon} 
\end{figure}

\begin{theorem}
  \label{thm:vert-contra}
  Let $\W,\wt \W\in\Ga$ with $\wt \W=\W/ \{v_1,v_2\}$.
  \begin{enumerate}
  \item
    \label{vert-contra.a}
    If $w_e\leq \wt w_e$ for all $e\in E(G)$ and $\wt w([v])\leq w(v)$
    for all $v \in V(G) \setminus \{v_1,v_2\}$ and
    $\wt w([v_1])\leq w(v_1)+w(v_2)$, then $\W \lesse \wt \W $ and
    $\wt \W \less[r+1-s] \W$ (and hence
    $\W \less \wt \W\less[r+1-s] \W$) where
    \begin{equation}
      \label{eq:def.r.s}
      r=\min\{\deg^{G}(v_1),\deg^{G}(v_2)\}
      \qquadtext{and}
      s=\card{\set{e\in E^G(v_1,v_2)}{\alpha_e =0}}
    \end{equation}
    is the minimal degree and $s$ the number of (unoriented) edges
    joining $v_1$ and $v_2$ having no magnetic potential.  In
    particular, if $v_1,v_2$ are not adjacent, then $s=0$.
  \item
    \label{vert-contra.b}
    If $w_e= \wt w_e$ for all $e\in E(G)$, and $\wt w(v)= w(v)$ for all
    $v \in V(G) \setminus \{v_1,v_2\}$ and $\wt w([v_1])= w(v_1)+w(v_2)$,
    then $\W \lesse \wt \W $ and $\wt \W \less[1] \W$ (and hence
    $\W \less \wt \W \less[1] \W $).
  \end{enumerate}
\end{theorem}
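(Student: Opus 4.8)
The plan is to prove the ``$\lesse$'' assertions through the geometric preorder and the reverse spectral inequalities through the Courant--Fischer principle (\Thm{courant}), reusing the mechanism of the edge-deletion argument.

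First I would settle $\W\lesse\wt\W$ in both parts. The quotient map $\map{\kappa}{\G}{\wt\G}$ is \aGM-homomorphism respecting the magnetic potential (\Exenum{gam-homo}{ex:gam-homo2}); since $\kappa$ is the identity on edges and identifies exactly $v_1$ with $v_2$, one has $(\kappa_*\m)_e=\m_e$ for every $e$ and $(\kappa_*\m)([v_1])=\m(v_1)+\m(v_2)$, $(\kappa_*\m)([v])=\m(v)$ otherwise, so the hypotheses $\m_e\le\wt\m_e$, $\wt\m([v])\le\m(v)$ and $\wt\m([v_1])\le\m(v_1)+\m(v_2)$ are precisely conditions \Defenum{hom.gam-graph}{hom.mow-graph.c} and \Defenum{hom.gam-graph}{hom.mow-graph.d}. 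Hence $\kappa$ is \aGAM-homomorphism, $\W\lesse\wt\W$, and $\W\less\wt\W$ by \Thm{homomorphism}. In part~(ii) all three inequalities are equalities, so $\kappa$ is moreover measure preserving (\Defenum{hom.gam-graph}{hom.mow-graph.e}); the second clause of \Thm{homomorphism} then gives $\wt\W\less[r_0]\W$ with $r_0=\card{\W}-\card{\wt\W}=1$, which is the remaining assertion of part~(ii).

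For the reverse inequality $\wt\W\less[r+1-s]\W$ of part~(i) --- the technical core --- I would argue as in the proof of \Thm{delete-edge}, by exhibiting a good test space for $\lambda_k(\wt\W)$. Assume without loss of generality $\deg^\G(v_1)=r\le\deg^\G(v_2)$, and for $\phi\in\lsqr{V,\m}$ define $\psi$ on $\wt V$ by $\psi([v])=\phi(v)$ for $v\notin\{v_1,v_2\}$ and $\psi([v_1])=\phi(v_2)$. The key computation is that $(d_{\wt\alpha}\psi)_e=(d_\alpha\phi)_e$ for every edge $e$ not incident to $v_1$; that for each of the $s$ trivial-potential edges joining $v_1$ to $v_2$ one has $(d_{\wt\alpha}\psi)_e=0$ automatically (these become loops at $[v_1]$ with trivial potential); and that for each of the remaining $r-s$ edges $e$ at $v_1$ one has $(d_{\wt\alpha}\psi)_e=\ell_e(\phi):=\e^{\im\alpha_e/2}\phi(\bd_+e)-\e^{-\im\alpha_e/2}\phi(v_2)$. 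Imposing on $\phi$ the $1+(r-s)$ linear constraints $\phi(v_1)=0$ and $\ell_e(\phi)=0$ (for the $r-s$ non-trivial edges at $v_1$), the quantity $\|d_{\wt\alpha}\psi\|^2_{\lsqr{\wt E,\wt\m}}$ collapses to the sum of $|(d_\alpha\phi)_e|^2\wt\m_e$ over edges not incident to $v_1$, while $\|\psi\|^2_{\lsqr{\wt V,\wt\m}}$ is read directly off the vertex weights. Taking $\phi$ in the span of the first $k+r+1-s$ eigenfunctions of $\Delta_\alpha$ intersected with these $r+1-s$ constraints produces a space of dimension $\ge k$ on which $\phi\mapsto\psi$ is injective (because $\phi(v_1)=0$) and on whose image the $\wt\W$-Rayleigh quotient is $\le\lambda_{k+r+1-s}(\W)$ --- here the hypotheses $\wt\m_e\ge\m_e$ and $\wt\m\le\m$ are fed into the comparison of the two Rayleigh quotients, exactly as the conditions on $\varrho_\infty$ and the weights enter \Thm{delete-edge}. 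By \Eq{eq:courant2} this gives $\lambda_k(\wt\W)\le\lambda_{k+r+1-s}(\W)$, and together with \Lemenum{basic}{basic:3} and the previous paragraph one obtains $\W\less\wt\W\less[r+1-s]\W$; the $+1$ in the shift comes from the constraint $\phi(v_1)=0$, the $+(r-s)$ from the functionals $\ell_e$, and the $s$ reflects the trivial loops produced by the contraction.

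The step I expect to be the main obstacle is the bookkeeping of the three weight inequalities through this min--max estimate: one must check that once the constraints confine the test functions to the part of the graph away from $v_1$, the inequalities $\wt\m_e\ge\m_e$ and $\wt\m\le\m$ combine so that the $\wt\W$-Rayleigh quotient is still dominated by $\lambda_{k+r+1-s}(\W)$ (for combinatorial and standard weights every inequality involved is an equality and this is transparent --- it is the analogue of the denominator-positivity point in \Thm{delete-edge}), and that the constraint space has dimension exactly $r+1-s$, not more. A cleaner but essentially equivalent route avoids the explicit min--max: delete the $r$ edges at $v_1$ one at a time so that the resulting graph $\W_{-v_1}$ satisfies $\W_{-v_1}\less\W$ (each deletion is an \GAM-homomorphism into $\W$ by \Thm{delete-edge}, and one uses transitivity), observe that the now-isolated vertex $v_1$ contributes a single eigenvalue $0$, hence a shift of $1$ (cf.\ \Rem{std.deg.zero}), and then re-attach the $r$ edges at $v_2=[v_1]$ to recover $\wt\W$: the $s$ trivial loops are spectrally inert by \Thm{delete-edge}, each of the other $r-s$ additions contributes one unit of shift, and transitivity of $\less[\cdot]$ (\Lemenum{basic}{basic:3}) assembles $\wt\W\less[r+1-s]\W$; combining with $\W\less\wt\W$ completes part~(i).
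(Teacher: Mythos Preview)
Your argument for $\W\lesse\wt\W$ and for all of part~(ii) is correct and coincides with the paper's: the quotient map $\kappa$ is the required \GAM-homomorphism, and under the equalities of~(ii) it is measure preserving, so both conclusions follow from \Thm{homomorphism}. Your ``Route~2'' for $\wt\W\less[r+1-s]\W$ in part~(i) is also exactly the paper's proof: delete the $r$ edges at $v_1$ in both $\W$ and $\wt\W$, apply \Thmenum{delete-edge}{delete-edge.b} on the $\wt\W$-side (the $s$ trivial loops contributing no shift) and \Thmenum{delete-edge}{delete-edge.a} on the $\W$-side, and link the two edge-deleted graphs through the isolated vertex.

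There is, however, a genuine gap at that linking step when the inequalities in~(i) are strict. After deletion, $\W'=\W-E_{v_1}^\G$ carries the weights $w$ while $\wt\W'=\wt\W-E_{v_1}^\G$ carries $\wt w$; asserting that $\sigma(\W')$ equals $\sigma(\wt\W')$ with one extra~$0$ tacitly identifies these two Laplacians on the common part, which $w_e\le\wt w_e$ and $\wt w([v])\le w(v)$ do not ensure. Route~1 meets the same obstruction from the other side: on your constraint subspace the numerator of the $\wt\W$-Rayleigh quotient still carries $\wt w_e\ge w_e$, the wrong direction for the bound (you rightly flag this). In fact the conclusion of~(i) fails in this generality: take $V=\{v_1,v_2,v_3\}$ with $v_1$ isolated and a single edge $v_2v_3$, $w=\com$, and set $\wt w([v_1])=2$, $\wt w(v_3)=1$, $\wt w_e=2$; then $r=s=0$, $\sigma(\W)=(0,0,2)$, $\sigma(\wt\W)=(0,3)$, so $\wt\W\less[1]\W$ is false. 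Both routes do go through when the edge weights and the vertex weights on $V\setminus\{v_1,v_2\}$ coincide --- in particular for combinatorial weights, which is the only setting where part~(i) rather than~(ii) is actually invoked in \Cor{vert-contra}.
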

\begin{proof}
  \itemref{vert-contra.a} By our assumption and by definition of
  \aGAM-homomorphism, $\map{\kappa}{\W}{\wt \W}$ is \aGAM-homomorphism,
  i.e., $\W \lesse \wt \W$, and $\W \less\wt \W$ follows from
  \Thm{homomorphism}.

  Suppose $r=\deg(v_1)$ is the minimal degree of $v_1$ and $v_2$ and
  that $v_1$, $v_2$ are not adjacent ($E(v_1,v_2)=\emptyset$).  We now
  prove $\wt \W \less[r+1] \W$ by consecutively deleting the $r$ edges
  of $E^\G_{v_1}$.  Let $\W'=(\G',\alpha',\m')$ with
  $\G'=\G-E^\G_{v_1}$, and let $\alpha'$ and $\m'$ be the restrictions
  of the corresponding magnetic potential and weights on $\G$ onto
  $\G'$. Similarly, we define $\wt \W'$ with underlying graph
  $\wt \G'=\wt G-E^\G_{v_1}$ (recall that the edge sets of $\G$ and
  $\wt G$ are the same).  From \Thmenum{delete-edge}{delete-edge.b}
  applied $r$ times and the transitivity in \Lemenum{basic}{basic:3}
  we conclude $\wt\W \less[r] \wt \W'$.  Moreover, from
  \Thmenum{delete-edge}{delete-edge.a} applied $r$ times and the
  transitivity in \Lemenum{basic}{basic:3} we conclude $\W' \less \W$.
  Since the order of vertex contraction and edge deletion does not
  matter, we have $\wt G-E^\G_{v_1}= (\G'-E^\G_{v_1})/\{v_1,v_2\}$, so
  that $\W'$ is $\wt \W'$ together with $v_1$ as an isolated vertex.  In
  particular, the spectrum of $\W'$ is just the one of $\wt \W'$ with
  an extra $0$, and therefore $\wt \W' \less[1] \W'$.  The result then
  follows again by transitivity.

  If $v_1,v_2$ are adjacent, then each edge $e \in E(v_1,v_2)$ with
  $\alpha_e=0$ turns into a loop in $\wt G$, hence the spectral shift
  is $0$ for each such edge (\Thmenum{delete-edge}{delete-edge.b}).
  
    \itemref{vert-contra.b} Here, the \GAM-homomorphism
  $\map{\kappa}{\W}{\wt \W}$ is measure preserving, hence we conclude
  $\W \less \wt \W$ and $\wt \W \less[1] \W$ both from \Thm{homomorphism}
  with $r=\card{V(\W)}-\card{V(\wt \W)}=1$.
\end{proof}

Similarly,~\cite[Theorem~2.7]{chen:04} (and again generalised to the case
of signed graphs in~\cite[Theorem~10]{atay-tuncel:14}) prove a weaker
version of our vertex contraction for the standard Laplacian, namely
$\W \less[1] \wt \W \less[1] \W$ in our notation, under the additional
assumption that the vertices $v_1,v_2$ have combinatorial distance at
least $3$.  The latter restriction is mainly due to the fact that both
papers avoid the use of multigraphs, namely multiple edges and loops.

As corollary we restrict the theorem to the case of combinatorial and
standard weights.  Note that our result improves in
particular~\cite[Theorem~2.7]{chen:04} (standard Laplacian)
and~\cite[Theorem~10]{atay-tuncel:14} (signed standard Laplacians: in
both articles, only $\W \less[1] \wt \W \less[1] \W$ is proven (in our
notation) for vertices $v_1,v_2$ with combinatorial distance at least
$3$.  Our corollary does not need this restriction and gives a better
shift (in the standard case):
\begin{corollary}
  \label{cor:vert-contra}
  Let $\W,\wt \W\in\Ga$ with $\wt \W=\W/ \{v_1,v_2\}$.
  \begin{enumerate}
  \item
    \label{cor.vert-contra.b}
    If $\W,\wt \W\in\Co$, then $\W\lesse \wt \W$ and
    $\wt \W\less[r+1-s] \W$, hence $\W \less \wt \W \less[r+1-s] \W$,
    where $r$ and $s$ are defined in~\eqref{eq:def.r.s}.
  \item
    \label{cor.vert-contra.a}
    If $\W,\wt \W\in\De$, then $\W\lesse \wt \W$ and $\wt \W \less[1] \W$,
    hence $\W\less \wt \W \less[1] \W$.
  \end{enumerate}
\end{corollary}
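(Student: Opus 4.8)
The plan is to obtain both parts as direct specialisations of \Thm{vert-contra}: I would check which of the weight hypotheses of that theorem are satisfied by the combinatorial and by the standard weight, and then simply read off the conclusion. The combinatorial weight will meet only the hypotheses of \Thmenum{vert-contra}{vert-contra.a}, whereas the standard weight will meet the stronger, measure-preserving hypotheses of \Thmenum{vert-contra}{vert-contra.b}; this is exactly what accounts for the difference between the shift $r+1-s$ in the first case and the shift $1$ in the second.

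For $\W,\wt\W\in\Co$, I would first observe that the combinatorial weight assigns the value $1$ to every vertex and every edge, both in $\G$ and in $\wt\G$. Hence $\m_e=\wt\m_e=1$ for all edges, $\wt\m([v])=1=\m(v)$ for all $v\in V(\G)\setminus\{v_1,v_2\}$, and $\wt\m([v_1])=1\le 2=\m(v_1)+\m(v_2)$. These are precisely the hypotheses of \Thmenum{vert-contra}{vert-contra.a}, which then gives $\W\lesse\wt\W$ and $\wt\W\less[r+1-s]\W$ with $r,s$ as in~\eqref{eq:def.r.s}; since $\W\lesse\wt\W$ implies $\W\less\wt\W$ (by \Thm{homomorphism}, or directly from \Thm{vert-contra}), this yields the asserted chain $\W\less\wt\W\less[r+1-s]\W$.

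For $\W,\wt\W\in\De$, the one point that needs care — and the only real obstacle — is the bookkeeping of the degree under contraction. Because $\wt\G=\G/\{v_1,v_2\}$ keeps the entire edge set, the edges issuing from a vertex $v\notin\{v_1,v_2\}$ are unchanged, so $\deg^{\wt\G}([v])=\deg^\G(v)$; and every edge incident to $v_1$ or to $v_2$ becomes incident to the merged vertex $[v_1]$, where, by the topological contraction convention of \Rem{vx-contr-convention}, an edge joining $v_1$ and $v_2$ becomes a loop counted twice in the degree — matching its single contribution to each of $\deg^\G(v_1)$ and $\deg^\G(v_2)$. Thus $\deg^{\wt\G}([v_1])=\deg^\G(v_1)+\deg^\G(v_2)$. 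Together with $\m_e=\wt\m_e=1$ on all edges, this gives equality in all three conditions of \Thmenum{vert-contra}{vert-contra.b}, so the measure-preserving case applies and produces $\W\lesse\wt\W$ and $\wt\W\less[1]\W$, hence $\W\less\wt\W\less[1]\W$. Everything beyond this degree identity is a verbatim citation of \Thm{vert-contra}.
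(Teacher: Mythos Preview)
Your proposal is correct and follows exactly the paper's approach: verify that the combinatorial weights satisfy the hypotheses of \Thmenum{vert-contra}{vert-contra.a} and that the standard weights satisfy the (measure-preserving) hypotheses of \Thmenum{vert-contra}{vert-contra.b}, then read off the conclusions. Your write-up is in fact more detailed than the paper's, which simply asserts that the respective conditions hold and records the degree identity $\deg^{\wt\G}([v_1])=\deg^\G(v_1)+\deg^\G(v_2)$ for the standard case.
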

\begin{proof}
  \itemref{cor.vert-contra.b} The claim follows as the combinatorial
  weights fulfil the condition in
  \Thmenum{vert-contra}{vert-contra.a}.

  \itemref{cor.vert-contra.a} The standard weights fulfil the
  condition in \Thmenum{vert-contra}{vert-contra.b} as
  $\deg^{\G'}([v_1])=\deg^\G(v_1)+\deg^\G(v_2)$, in particular we have
  $\W \lesse \wt \W$ and $\W \less \wt \W \less[1] \W $.
\end{proof}

\begin{example}
  For $t\in[0,2\pi]$, consider the \GAM-graph $\W_t\in\De^t$ with
  underlying graph $\G$ as in Figure~\ref{subfig:1}. 
  Since $\G$ is a tree, we have $\sigma(\W_t)=\sigma(\W_0)$ and $\sigma(\W_0)$
  consists of six eigenvalues (dashed lines in Figure~\ref{subfig:3}).
  Let now $\wt \W_t=\W_t/\{v_1,v_2\}$, see Figure~\ref{subfig:2}; we
  orient the edges in the cycle such that the flux adds up to
  $t$. Figure~\ref{subfig:3} shows the
  five eigenvalues of $\sigma(W'_t)$ changing $t$ from $0$ to $2\pi$.
  By \Corenum{vert-contra}{cor.vert-contra.a} we have
  $\W_t\less \wt \W_t\less[1] \W_t$ for any $t$, then
  $\W_0\less \wt \W_t\less[1] \W_0$.  In particular, we can use the
  spectrum of the tree $\W$ to localised the spectrum of $\wt \W$ for any
  vector potential, i.e.\
  \begin{align*}
    \lambda_1(\wt \W_t)\in \left[ 0, 1-\frac{1}{\sqrt{2}}\right], \quad
    \lambda_2(\wt \W_t)\in \left[1-\frac{1}{\sqrt{2}}, 1-\frac{1}{\sqrt{6}}\right],
    \quad 
    \lambda_3(\wt \W_t)\in \left[1-\frac{1}{\sqrt{6}}, 
       1+\frac{1}{\sqrt{6}}\right],\\
    \lambda_4(\wt \W_t)\in \left[  1+\frac{1}{\sqrt{6}}, 
       1+\frac{1}{\sqrt{2}}\right]
    \quadtext{and}
    \lambda_5(\wt \W_t)\in \left[ 1+\frac{1}{\sqrt{2}},2\right].
  \end{align*}
  In this example, the previous localisation of the spectrum in the
  bracketing intervals remains the same if we identify any other pair
  of distinct vertices, i.e.\ $\W_0\less \wt \W'_t\less[1] \W_0$ where
  $\wt \W'_t=\W/\{u,v\}$ for any distinct vertices $u,v\in V(\G)$.
\end{example}

\subsection{Composite perturbations}  
\label{subsec:composite}

The following perturbation of graphs are composite and can be obtained
by the operations introduced in the preceding subsection.

\subsubsection{Contracting an edge}
\label{subsec:edge-contra}
Contracting an edge (not being a loop) is just the composition of the
two operations: deleting an edge $e_0$ and contracting the adjacent
vertices (note that the order of the perturbations does not
matter). Formally, let $\W$ be \aGAM-graph, an \emph{edge
  identification} of $\W$ is the \GAM-graph $\W'$ where
$\W'=(\W-e_0)/\{\bd_+e_0,\bd_-e_0\}=\W/\{\bd_+e_0,\bd_-e_0\}-e_0$ for
some edge $e_0 \in E(\G)$ (see \Exenum{gam-homo}{graph-homo:2} and
\Fig{edgecont}).  We write this operation simply as $\W'=\W/\{\e_0\}$
(again, we specify the weight later).

\begin{figure}[h]
  \centering
  
  \subfloat[{}\label{subfig:edgecontr1}]{    
    \begin{tikzpicture}[baseline, vertex/.style={circle,draw,fill, thick,
        inner sep=0pt,minimum size=1mm},scale=.5]
      \node (-1) at (-2,5) [vertex,label=below:] {};
      \node (0) at (-2,1) [vertex,label=below:] {};                                         
      \node (1) at (0,3) [vertex,fill=red,label=below:{\color{red}{$u$}}] {};
      \node (2) at (2,3) [vertex,fill=red,label=below:{\color{red}{$v$}}] {};
      \node (3) at (4,5) [vertex,label=left:] {};
      \node (4) at (4,1) [vertex,label=right:] {};

      \draw[-](-1) edge node[below] {} (1);	  
      \draw[-](0) edge node[below] {} (1);         	                          
      \draw[-,red](1) edge node[above] {$e_0$} (2);
      \draw[-](2) edge node[below] {} (3);
      \draw[-](2) edge node[below] {} (4);
      \draw[-](3) edge node[right] {$e_1$} (4);  	  	
    \end{tikzpicture}}  
  \subfloat[{}\label{subfig:edgecontr2}]{   
    \begin{tikzpicture}[baseline, vertex/.style={circle,draw,fill, thick,
        inner sep=0pt,minimum size=1mm},scale=.5]
      \node (-1) at (0,5) [vertex,label=below:] {};
      \node (0) at (0,1) [vertex,label=below:] {};                                         
      \node (2) at (2,3) [vertex,fill=red,label=below:{\color{red}{$w$}}] {};
      \node (3) at (4,5) [vertex,label=left:] {};
      \node (4) at (4,1) [vertex,label=right:] {};

      \draw[-](-1) edge node[below] {} (2);	  
      \draw[-](0) edge node[below] {} (2);         	                          
      \draw[-](2) edge node[below] {} (3);
      \draw[-](2) edge node[below] {} (4);
      \draw[-](3) edge node[right] {$e_1$} (4);  	  	
    \end{tikzpicture}}  
  \subfloat[{}\label{subfig:edgecontr3}]{ 
    \includegraphics[width=8cm,height=5cm]{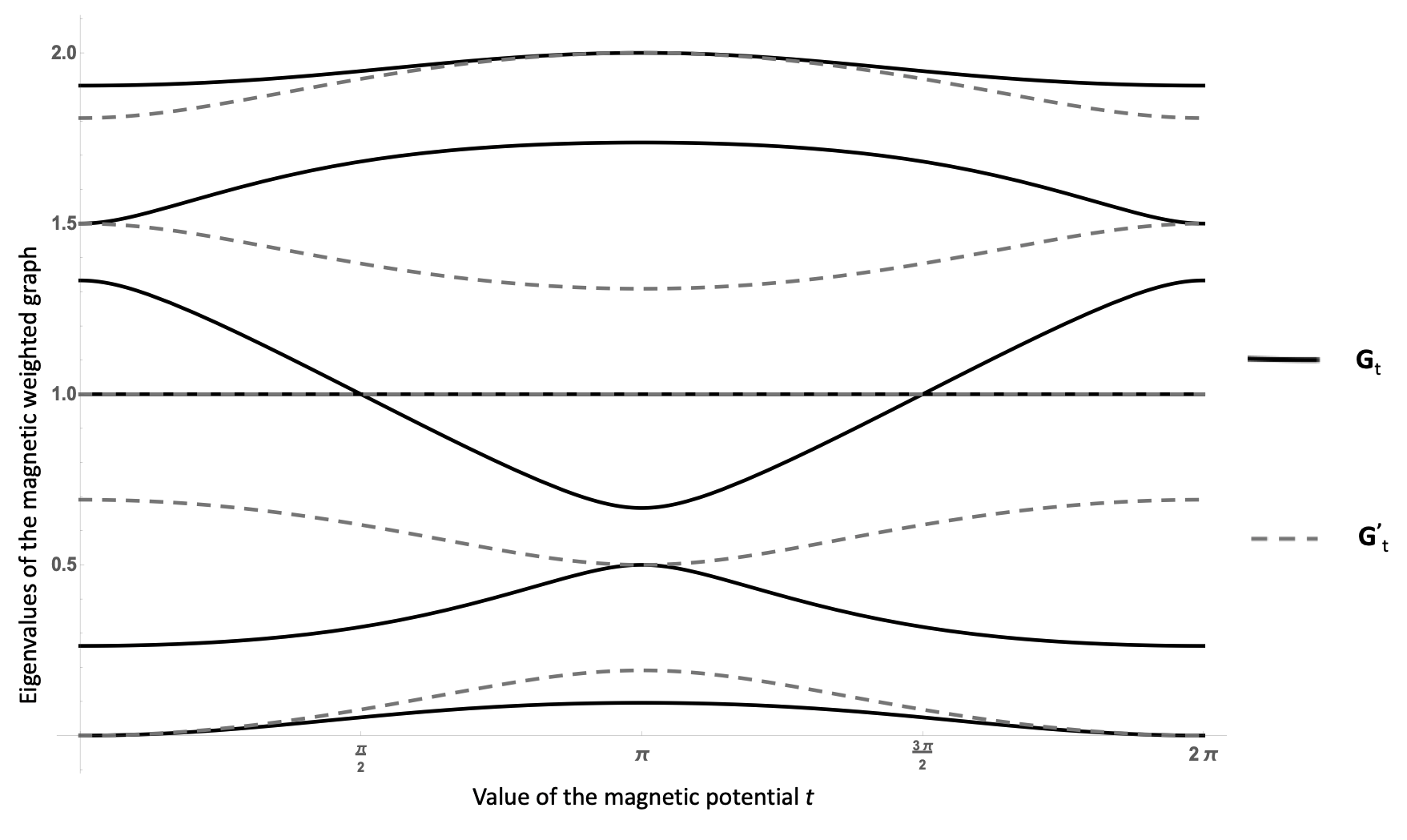}\qquad}            
  \caption{If we contract the edge $e_0$ of the graph $\G$ in
    Fig.~\ref{subfig:edgecontr1}, we obtain the graph $\G'=\G/\{e_0\}$
    as in Fig.~\ref{subfig:edgecontr2}. Let $t\in[0,2\pi]$ and let
    $\W_t,\W'_t\in\De^t$ be the corresponding magnetic weighted
    graphs, then $\sigma(\W_t)$ (respectively, $\sigma(\W'_t)$) are
    plotted in Fig.~\ref{subfig:edgecontr3} as solid (respectively,\
    dashed) lines for $t\in[0,2\pi]$.  Here, we have
    $\W_t \less \W_t' \less[1] \W_t$, and one can see this classical
    interlacing by the fact that the solid and dashed lines do not
    intersect, and solid and dashed lines alternate.  Note that the
    horizontal eigenvalue (independent of $t$) is an eigenvalue for
    both graphs.}
  \label{fig:edgecont}
\end{figure}
\begin{theorem}
  \label{thm:edge-contra}
  Let $\W,\W'\in\Ga$ with $\W'=\W/ \{e_0\}$, where $e_0 \in E(\G)$ is
  not a loop and simple (i.e., $\card {E(\bd_-e_0,\bd_+e)_0}=1$).
  \begin{enumerate}
  \item
    \label{edge-contra.b}
    If $\W,\W'\in\Co$, then $\W\less[1] \W'\less[r+1] \W$ where
    $r=\min\{\deg(\partial_+e_0),\deg(\partial_-e_0)\}$. If
    $\alpha_{e_0}=0$ or if $e_0$ is a bridge edge, then
    $\W\less \W' \less[r] \W$.
  \item
    \label{edge-contra.a}
    If $\W,\W'\in\De$ then $\W\less[1] \W' \less[2] \W$. If
    $\alpha_{e_0}=0$ then $\W\less[1] \W' \less[1] \W$.  If
    $\alpha_{e_0}=\pi$ then $\W\less \W' \less[2] \W$.  If $e_0$ is a
    bridge edge, then $\W\less \W' \less[1] \W$.
  \end{enumerate}
\end{theorem}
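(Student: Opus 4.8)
The plan is to realise the edge contraction $\W'=\W/\{e_0\}$ as a composition of the two elementary perturbations studied above — deleting the edge $e_0$ and contracting its endpoints $u:=\bd_-e_0$ and $v:=\bd_+e_0$ — and then to chain the corresponding spectral estimates by transitivity of $\less[\cdot]$, i.e.\ \Lemenum{basic}{basic:3}. Since the order of the two operations is irrelevant, there are two decompositions available: the \emph{deletion-first} route $\W\to\W_1:=\W-e_0\to\W_1/\{u,v\}=\W'$, in which $u$ and $v$ are \emph{non-adjacent} in $\W_1$ because $e_0$ is simple; and the \emph{contraction-first} route $\W\to\widehat\W:=\W/\{u,v\}\to\widehat\W-e_0'=\W'$, in which $e_0$ becomes a loop $e_0'$ with $\alpha_{e_0'}=\alpha_{e_0}$, the quotient map preserving the potential (cf.\ \Exenum{gam-homo}{ex:gam-homo2}). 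For each direction of the inequality and each weight class I would pick whichever route delivers the sharpest shift.

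For \itemref{edge-contra.b} I would use the contraction-first route. By \Corenum{vert-contra}{cor.vert-contra.b}, $\W\lesse\widehat\W$, hence $\W\less\widehat\W$, and $\widehat\W\less[r+1-s]\W$, where $r=\min\{\deg(u),\deg(v)\}$ and, $e_0$ being the only edge joining $u$ and $v$, $s=1$ when $\alpha_{e_0}=0$ and $s=0$ otherwise. Applying \Corenum{delete-edge}{cor.delete-edge.b} to the loop deletion $\W'=\widehat\W-e_0'$ gives $\widehat\W\less[1]\W'$ and $\W'\lesse\widehat\W$, hence $\W'\less\widehat\W$. Chaining, $\W\less\widehat\W\less[1]\W'$ yields $\W\less[1]\W'$, and $\W'\less\widehat\W\less[r+1-s]\W$ yields $\W'\less[r+1]\W$ in general. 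If $\alpha_{e_0}=0$, then $e_0'$ is a loop with trivial potential, so \Corenum{delete-edge-loop}{cor.delete-edge-loop.b'} makes $\W'$ and $\widehat\W$ isospectral, which together with $s=1$ sharpens the chain to $\W\less\W'\less[r]\W$. For the bridge case I would invoke \Lem{bridge}: if $e_0$ is a bridge, $\alpha$ is cohomologous to a potential $\wt\alpha$ vanishing on $e_0$ and agreeing with $\alpha$ on every other edge, so by \Prp{sunada.weight} $\W$ is isospectral to $\wt\W=(\G,\wt\alpha,\com)$, while $\W'=\wt\W/\{e_0\}$ because $\alpha$ and $\wt\alpha$ coincide on $E(\G')$; the case $\alpha_{e_0}=0$ applied to $\wt\W$ then gives $\W\less\W'\less[r]\W$, with $r$ depending only on $\G$.

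For \itemref{edge-contra.a} the general bound comes from the deletion-first route: by \Corenum{delete-edge}{cor.delete-edge.a}, $\W\less[1]\W_1\less[1]\W$; since $u,v$ are non-adjacent in $\W_1$ and the standard weight of $\W_1/\{u,v\}$ coincides with that of $\W'$, \Corenum{vert-contra}{cor.vert-contra.a} gives $\W_1\lesse\W'$ (hence $\W_1\less\W'$) and $\W'\less[1]\W_1$, so transitivity produces $\W\less[1]\W'\less[2]\W$. The improvements in the degenerate cases I would extract from the contraction-first route together with the gauge reduction: when $\alpha_{e_0}=\pi$ the loop $e_0'$ carries potential $\pi$, so $(d_\alpha\phi)_{e_0'}$ is supported at the contracted vertex only and \Corenum{delete-edge-loop}{cor.delete-edge-loop.a'} compares $\widehat\W$ and $\W'$ with shift $0$, improving the left-hand shift via $\W\less\widehat\W\less\W'$; when $\alpha_{e_0}=0$ the loop $e_0'$ is invisible to $d_\alpha$, so \Cor{delete-edge-loop} again relates $\widehat\W$ and $\W'$ with no shift; and when $e_0$ is a bridge I would first gauge $\alpha$ to $0$ on $e_0$ as above and then use that $\W-e_0$ splits into two components meeting only at the contracted vertex of $\W'$.

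The bookkeeping of which corollary supplies which shift is routine; the genuinely delicate step — and the one I expect to be the main obstacle — is pinning down the \emph{sharp} shifts in the degenerate cases $\alpha_{e_0}\in\{0,\pi\}$ and for bridge edges. This is where one must combine the decomposition through the loop $e_0'$ with the fine behaviour of $d_\alpha$ on a loop (identically $0$ when $\alpha_{e_0}=0$, a rank-one expression at the contracted vertex when $\alpha_{e_0}=\pi$) and with the gauge-triviality of any potential supported on a bridge, controlling carefully how the weight at the contracted vertex changes along the way.
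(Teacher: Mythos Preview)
Your approach is essentially the paper's: decompose the edge contraction as vertex contraction followed by deletion of the resulting loop, chain the corresponding corollaries via transitivity, and use the gauge lemma for bridges. For part~\itemref{edge-contra.b} and for the special cases $\alpha_{e_0}\in\{0,\pi\}$ of part~\itemref{edge-contra.a} your argument coincides with the paper's. For the general bound in part~\itemref{edge-contra.a} you take the deletion-first route while the paper takes contraction-first; both are valid and give $\W\less[1]\W'\less[2]\W$, so this is an inessential variation.

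There is, however, one genuine gap: your bridge argument for standard weights does not deliver $\W\less\W'$. Gauging $\alpha_{e_0}$ to $0$ and invoking the $\alpha_{e_0}=0$ case produces only $\W\less[1]\W'\less[1]\W$, and the appeal to ``$\W-e_0$ splits into two components meeting only at the contracted vertex'' does not by itself improve the left-hand shift. The paper's trick is to exploit \Lem{bridge} a \emph{second} time: because $e_0$ is a bridge you may equally well gauge $\alpha_{e_0}$ to $\pi$, and the $\alpha_{e_0}=\pi$ case then supplies the missing $\W\less\W'$. Combining this with $\W'\less[1]\W$ from the $0$-gauge gives the claimed $\W\less\W'\less[1]\W$. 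So the ingredient you are missing is precisely that for a bridge one applies the gauge freedom twice, once to each extremal value.
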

\begin{proof}
  \itemref{edge-contra.b}~Suppose that $\W,\W'\in\Co$.  Let
  $\wt \W=\W/\{\bd_-e_0,\bd_+e_0\} \in \Co$ be the graph with
  combinatorial weights obtained from $\W$ by contracting the two
  vertices of $e_0$.  Recall that $e_0$ becomes a loop (for simplicity
  also denoted by $e_0$) in $\wt \W$.  From
  \Corenum{vert-contra}{cor.vert-contra.b} we obtain
  $\W \less \wt \W \less[r+1] \W$.  Next we delete the loop in $\wt \W$,
  so that $\W'=\wt \W-e_0$; from
  \Corenum{delete-edge}{cor.delete-edge.b} we obtain
  $\wt\W \less[1] \W' \less \wt \W$.  Combining both arguments, we have
  \begin{equation*}
    \W \less \wt \W \less[1] \W' \less \wt \W \less[r+1] \W
  \end{equation*}
  and the transitivity in \Lemenum{basic}{basic:3} gives us the
  result.

  If $\alpha_{e_0}=0$, then we use
  \Corenum{delete-edge-loop}{cor.delete-edge-loop.b'} and obtain
  $\wt\W \less \W' \less \wt\W$ ($\wt\W$ and $\W'$ are trivially
  isospectral); the same argument as above (now with $s=1$ in
  \Corenum{vert-contra}{cor.vert-contra.b}) then gives
  $\W \less \W' \less[r] \W$.  If $e_0$ is a bridge edge, then we can
  find an equivalent vector potential $\hat \alpha$ with
  $\hat \alpha_{e_0}=0$ by \Lem{bridge}.  From \Prp{sunada.weight} we
  conclude that $\W$ and $\hat \W=(\G,\hat \alpha,\m)$ are
  isospectral, hence the above argument with $\W$ replaced by
  $\hat \W$ yields the result.

  \itemref{edge-contra.a}~Suppose that $\W,\W'\in\De$.  We follow the
  same strategy and define $\wt\W=\W /\{\bd_-e_0,\bd_+e_0\} \in \De$.
  Now \Corenum{vert-contra}{cor.vert-contra.a} implies
  $\W \less \wt\W \less[1] \W$.  Since again $\W'=\wt\W-e_0$, we
  conclude from \Corenum{delete-edge}{cor.delete-edge.a} that
  $\wt\W \less[1] \W' \less[1] \wt\W$.  From the transitivity in
  \Lemenum{basic}{basic:3} we conclude $\W\less[1] \W' \less[2] \W$.

  If $\alpha_{e_0}=0$ then
  \Corenum{delete-edge-loop}{cor.delete-edge-loop.a'} gives us
  $\W' \less \wt\W$.  If $\alpha_{e_0}=\pi$, we have $\wt\W \less \W'$.
  If $e_0$ is a bridge edge, then we can find equivalent vectors
  potentials with value $0$ or $\pi$ on $e_0$, and use the same
  argument as in the combinatorial case to conclude the better
  estimate $\W\less \W' \less[1] \W$.
\end{proof}

\begin{example}
  \label{ex:contraedge}
  For any $t\in[0,2\pi]$, consider the \GAM-graphs $\W_t\in\De^t$
  defined by the graph $\G$ in Figure~\ref{subfig:edgecontr1}; again, 
  we orient the edges along the closed path such that the flux through it adds up to $t$. 
  Then $\sigma(\W_t)$ consists of six eigenvalues that depend on the value
  of $t$.  The spectrum $\sigma(\W_t)$ is plotted as a solid line in
  Figure~\ref{subfig:edgecontr3} for all $t\in[0,2\pi]$.  If we
  consider $\W'\in\Co^t$ given by $\W'_t=\W/\{e_0\}$, i.e.\ $\W'$ is
  defined by the graph $\G'=\G/\{e_0\}$ in~\ref{subfig:edgecontr2},
  the spectrum $\sigma(\W')$ consists of five eigenvalues (dashed
  lines in Figure~\ref{subfig:edgecontr3}).  Since $e_0$ is a bridge
  edge, we see graphically the interlacing given by \Thm{edge-contra}
  (the solid and dashed line alternate); i.e.\
  $\W_t\less \W_t' \less[1] \W_t$ for any $t\in[0,2\pi]$.
\end{example}

\subsubsection{Contracting a pendant edge}
\label{subsec:delete-pendant}
We now consider a special case of contraction a \emph{pedant} edge
(see Figure~\ref{fig:pendant}).  In particular, a pendant edge is
always a bridge edge.  In particular, \Thm{edge-contra} gives us (now
with $r=1$):

\begin{figure}[h]
  \centering \subfloat[{}\label{subfig:pendant1}]{
    \qquad \begin{tikzpicture}[baseline,
      vertex/.style={circle,draw,fill, thick, inner sep=0pt,minimum
        size=1mm},scale=.5]
      
      \node (1) at (0,3) [vertex,label=below:] {};
      \node (2) at (2,3) [vertex,label=below:] {};
      \node (3) at (4,5) [vertex,label=left:] {};
      \node (4) at (4,1) [vertex,label=below:] {};
      \node (5) at (6,3) [vertex,fill=red,label=below:{\color{red}{$v_0$}}] {};
      
      \draw[-](1) edge node[below] {} (2);
      \draw[-](2) edge node[above] {$e_1$} (3);
      \draw[-](2) edge node[below] {} (4);
      \draw[-](3) edge node[below] {} (4);
      \draw[-,red](3) edge node[above] {$e_0$} (5);
      
    \end{tikzpicture}}  
  \color{black}\subfloat[{}\label{subfig:pendant2}]{
    \begin{tikzpicture}[baseline, vertex/.style={circle,draw,fill, thick,
        inner sep=0pt,minimum size=1mm},scale=.5]	
      \node (1) at (0,3) [vertex,label=below:] {};
      \node (2) at (2,3) [vertex,label=below:] {};
      \node (3) at (4,5) [vertex,label=left:] {};
      \node (4) at (4,1) [vertex,label=below:] {};
      \draw[](1) edge node[below] {} (2);
      \draw[](2) edge node[above] {$e_1$} (3);
      \draw[](2) edge node[below] {} (4);
      \draw[](3) edge node[below] {} (4);
    \end{tikzpicture}}
  \quad
  \subfloat[{}\label{subfig:pendant3}]{
    \includegraphics[width=8cm,height=5cm]{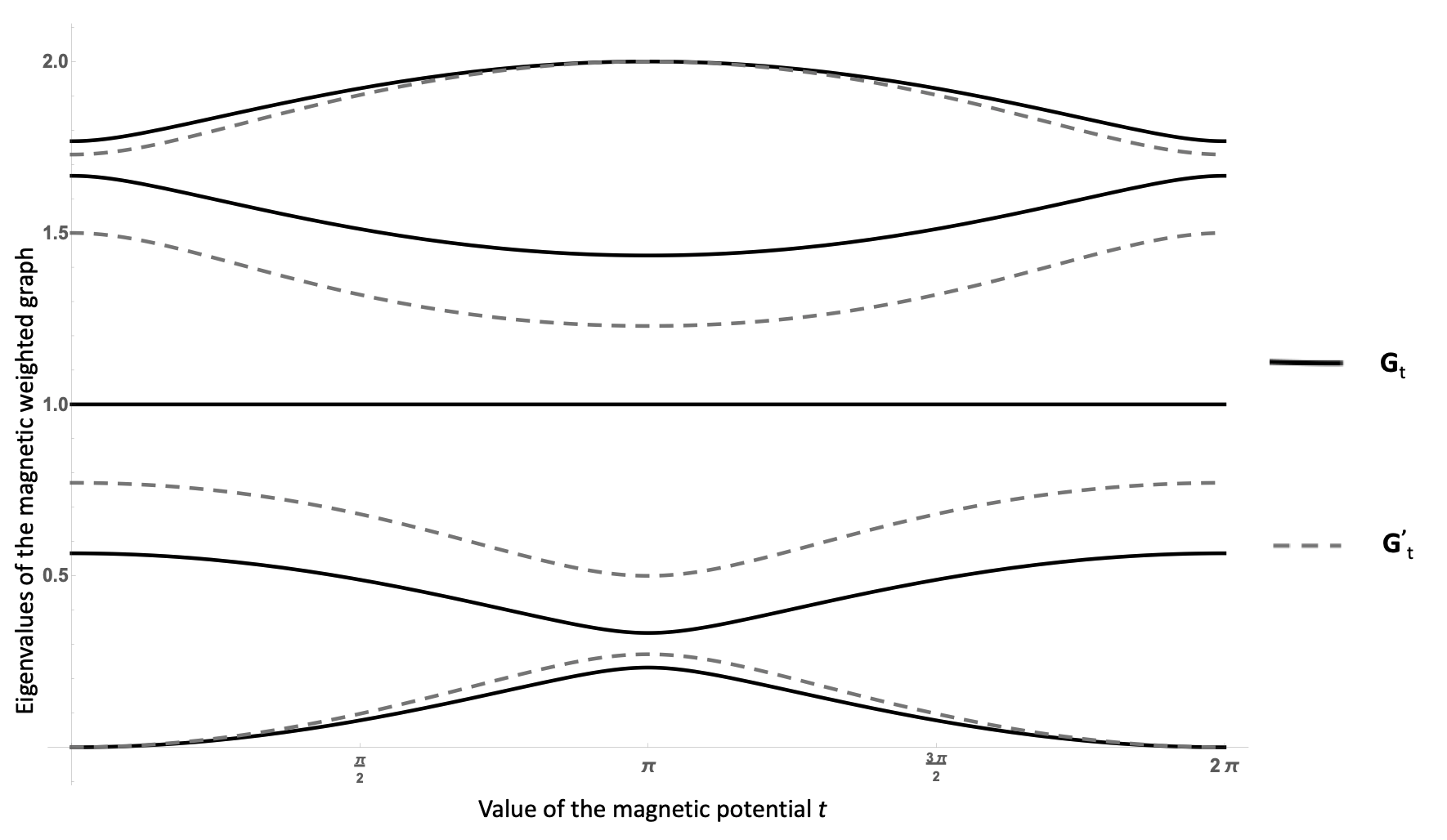}}        
  \caption{The graph $\G$ with $e_0$ a pendant edge
    (Fig.~\ref{subfig:pendant1}.  If we make the edge contraction of
    $e_0$, we obtain the graph $\G'=\G/\{e_0\}$ in
    Fig.~\ref{subfig:pendant2}. For any $t\in[0,2\pi]$, consider
    $\W_t,\W'_t\in\De^t$ defined by $\G$ (respectively, $\G'$).  In
    Fig.~\ref{subfig:pendant3}, we plot $\sigma(\W_t)$ (respectively,
    $\sigma(\W'_t)$) in solid (respectively,\ dashed) line for all
    $t\in[0,2\pi].$ }
  \label{fig:pendant}
\end{figure}

\begin{corollary}
  \label{cor:delete-pendant}
  Let $\W,\W'\in\Ga$ where $\W'=\W/\{e_0\}$ and $e_0$ is a pendant edge.
  \begin{enumerate} 
  \item
    \label{delete-pendant.b}
    If $\W,\W'\in\Co$, then $\W\less\W'\less[1] \W$.
  \item
    \label{delete-pendant.a}
    If $\W,\W'\in\De$, then $\W \less \W'\less[1] \W$. 
  \end{enumerate}
\end{corollary}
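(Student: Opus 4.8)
The plan is to deduce the statement directly from \Thm{edge-contra}, whose conclusions already cover the bridge-edge case; all that remains is to check that a pendant edge satisfies the hypotheses of that theorem and that the shift parameter $r$ occurring there specialises to $1$. There is thus no genuinely new content to prove here — the work is bookkeeping about pendant edges.

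First I would collect the elementary facts. If $e_0$ is a pendant edge then, by \Def{pendant}, one of its endpoints, say $v_0 \in \{\bd_- e_0, \bd_+ e_0\}$, is a pendant vertex with $\deg^\G(v_0) = 1$; in particular $v_0$ is incident to exactly one unoriented edge, namely $e_0$. Consequently $e_0$ is not a loop (a loop at $v_0$ would contribute $2$ to $\deg^\G(v_0)$), and $e_0$ is simple in the sense required by \Thm{edge-contra}, i.e.\ $\card{E(\bd_- e_0, \bd_+ e_0)} = 1$. Moreover both endpoints of $e_0$ have degree at least $1$ while $v_0$ has degree exactly $1$, so
\begin{equation*}
  r = \min\{\deg^\G(\bd_+ e_0), \deg^\G(\bd_- e_0)\} = 1 .
\end{equation*}
Finally, as already noted before the statement, a pendant edge is always a bridge edge (deleting $e_0$ isolates $v_0$ and disconnects nothing else, since no shortest path between two vertices distinct from $v_0$ can pass through the degree-$1$ vertex $v_0$), so $e_0$ is a bridge edge in the sense of \Def{pendant}.

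It then suffices to invoke \Thm{edge-contra}. If $\W, \W' \in \Co$ then, $e_0$ being a bridge edge, \Thmenum{edge-contra}{edge-contra.b} yields $\W \less \W' \less[r] \W$, and substituting $r = 1$ gives $\W \less \W' \less[1] \W$. If $\W, \W' \in \De$ then, again because $e_0$ is a bridge edge, \Thmenum{edge-contra}{edge-contra.a} gives $\W \less \W' \less[1] \W$ directly. The only point worth a second look — and the closest thing to an obstacle — is confirming the ``not a loop / simple edge'' requirement of \Thm{edge-contra}; this is exactly why the argument is routed through the pendant vertex $v_0$ and the identity $\deg^\G(v_0) = 1$.
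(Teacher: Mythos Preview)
Your proof is correct and follows exactly the approach the paper intends: the paper simply remarks that a pendant edge is always a bridge edge and that \Thm{edge-contra} applies with $r=1$, which is precisely what you do. Your additional verification that a pendant edge is neither a loop nor a multiple edge (so that the hypotheses of \Thm{edge-contra} are met) is a helpful bit of care that the paper leaves implicit.
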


\begin{example}
  For all $t\in[0,2\pi]$, consider the \GAM-graph $\W_t\in\Co^t$
  defined by the graph $\G$ in Figure~\ref{subfig:pendant1} and choose
  the orientation of the edges along the cycle such that the flux
  through it adds up to $t$.  Define $\W'_t\in\Co^t$ where
  $\W'=\W/\{\e_0\}$, i.e.\ $\W'_t$ is defined by the graph in
  Figure~\ref{subfig:pendant2}. For each $t$, we have $\sigma(\W_t)$
  (respectively,\ $\sigma(\W'_t)$) consists of five (respectively,
  four) eigenvalues plotted as solid (respectively, dashed) lines for
  all $t\in[0,2\pi]$ in Figure~\ref{subfig:pendant3}.  From
  \Cor{delete-pendant} we conclude $\W_t\less \W_t'\less[1] \W_t$.
  This interlacing property for the combinatorial weights is only true
  for pendant edges and not for general bridge edges.
\end{example}

\subsubsection{Deleting a vertex}
\label{subsec:delete-vertex}
Let $G$ be a graph and $v_0\in V(G)$.  For simplicity, we assume that
there is no loop at $v_0$.  The graph $G'=G-v_0$ is obtained from $\G$
by \emph{deleting the vertex} $v_0$ and all its adjacent edges
$e \in E_{v_0}$, i.e.\ $V(G')=V(G)\setminus\{v_0\}$,
$E':=E(G')=E(G)\setminus E_{v_0}$ and $\bd^{G'}=\bd\restr{E'}$
(see~\cite[Section~2.1]{bondy-murty:08}).  We say that the \GAM-graph
$\W'=(\G',\alpha',w')$ is obtained from $\W=(\G,\alpha,\m)$ by
\emph{deleting} the \emph{vertex} $v_0$ (denoted by $\W'=\W-v_0$) if
$\G'=\G-v_0$ and if $\alpha'=\alpha\restr{E'}$; we specify the weight
$\m'$ in the following cases:
\begin{corollary}
  \label{cor:delete-vertex}
  Let $\W,\W'\in\Ga$ where $\W'=\W-v_0$. If $r=\deg(v_0)$, then
  \begin{enumerate}
  \item
    \label{delete-vertex.b}
    Let $\W,\W'\in\Co$, then $\W\less[r-1]\W'\less[1] \W$.
  \item
    \label{delete-vertex.a}
    Let $\W,\W'\in\De$, then $\W \less[r-1] \W'\less[r] \W$.
  \end{enumerate}
\end{corollary}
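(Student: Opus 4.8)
The plan is to realise the passage from $\W$ to $\W'=\W-v_0$ as a composition of moves already controlled in \Secs{geo}{geo}, and then to compose the corresponding spectral estimates. Write $r=\deg(v_0)\ge 1$ (we may assume $r\ge1$, as otherwise the shift $r-1$ is not a valid nonnegative shift). Since $v_0$ carries no loop, it is incident to exactly $r$ non-loop edges; delete $r-1$ of them, one at a time, obtaining a chain $\W=\W^{(0)},\W^{(1)},\dots,\W^{(r-1)}=:\W_\ast$ with $\W^{(i)}=\W^{(i-1)}-e_i$. Each step stays inside $\Co$ (resp.\ $\De$), since deleting an edge and re-declaring combinatorial (resp.\ standard) weights is exactly the operation $\W\mapsto\W-e$ in $\Co$ (resp.\ $\De$). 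In $\W_\ast$ the vertex $v_0$ has degree $1$, i.e.\ it is pendant, its unique remaining incident edge $e_\ast$ being a pendant (hence bridge) edge. The key observation is that contracting $e_\ast$ in $\W_\ast$ merely identifies $v_0$ with its unique neighbour $u$ and removes $e_\ast$; as $v_0$ has no other edge, this is the same as deleting the vertex $v_0$, so $\W_\ast/\{e_\ast\}=\W'$. One checks the weights match: for combinatorial weights both sides carry $\m\equiv1$ on $\G-v_0$, and for standard weights the only possibly affected vertex $u$ satisfies $\deg^{\G_\ast/\{e_\ast\}}(u)=\deg^{\G_\ast}(u)-1=\deg^{\G-v_0}(u)$; the magnetic potential is $\alpha\restr{E(\G-v_0)}$ on both sides.

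Now I chain the estimates using the transitivity from \Lemenum{basic}{basic:3}. In the combinatorial case, \Corenum{delete-edge}{cor.delete-edge.b} gives at each of the $r-1$ deletion steps $\W^{(i-1)}\less[1]\W^{(i)}$ and $\W^{(i)}\less\W^{(i-1)}$, so after $r-1$ steps $\W\less[r-1]\W_\ast$ and $\W_\ast\less\W$; combined with $\W_\ast\less\W'\less[1]\W_\ast$ from \Corenum{delete-pendant}{delete-pendant.b}, transitivity yields $\W\less[r-1]\W'$ and $\W'\less[1]\W$, that is $\W\less[r-1]\W'\less[1]\W$. In the standard case, \Corenum{delete-edge}{cor.delete-edge.a} gives $\W^{(i-1)}\less[1]\W^{(i)}\less[1]\W^{(i-1)}$ at each step, so $\W\less[r-1]\W_\ast$ and $\W_\ast\less[r-1]\W$; combined with $\W_\ast\less\W'\less[1]\W_\ast$ from \Corenum{delete-pendant}{delete-pendant.a}, transitivity gives $\W\less[r-1]\W'$ and $\W'\less[1]\W_\ast\less[r-1]\W$, hence $\W'\less[r]\W$, i.e.\ $\W\less[r-1]\W'\less[r]\W$. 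These are exactly the two claims.

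The only genuinely non-automatic point — and the reason the edge deletions must stop at $r-1$ rather than $r$ — is the bookkeeping of the shifts. If one deleted all $r$ edges at $v_0$, the result would be $\W-E_{v_0}$, which is $\W'$ together with an extra isolated vertex; by \Rem{std.deg.zero} that vertex contributes an eigenvalue $0$, so relating $\W-E_{v_0}$ back to $\W'$ costs a shift $1$ in the direction $\W'\less(\W-E_{v_0})$. One would then obtain only $\W\less[r]\W'\less[1]\W$ (combinatorial), resp.\ $\W\less[r]\W'\less[r+1]\W$ (standard), both strictly weaker than claimed. Performing a pendant-edge contraction for the final move is ``free'' in the critical direction, because \Cor{delete-pendant} gives $\W_\ast\less\W'$ with shift $0$; everything else is the transitivity of \Lem{basic}.
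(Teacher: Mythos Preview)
Your proof is correct and follows exactly the paper's approach: delete $r-1$ of the $r$ edges at $v_0$ using \Cor{delete-edge}, then remove the remaining pendant edge via \Cor{delete-pendant}, and chain the shifts with \Lemenum{basic}{basic:3}. Your additional paragraph explaining why one stops at $r-1$ rather than $r$ is a useful elaboration of the bookkeeping, but the core argument is identical to the paper's.
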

\begin{proof}
  First delete $r-1$ edges adjacent to $v_0$ and apply
  \Cor{delete-edge} in each case, i.e.\ for combinatorial and standard
  weights.  Finally, delete the pendant edge and apply the
  corresponding parts of \Cor{delete-pendant} .
\end{proof}

%
\section{Applications}
\label{sec:applications}
%
In this final section, we present a large variety of applications of
the preorder relations of \GAM-graphs studied before.  In particular,
we apply our results to study certain combinatorial aspects of graphs,
to prove how Cheeger's constant change under MW-homomorphisms and to
study the stability of eigenvalues under perturbation of graphs with
high multiplicity; we will also identify spectral gaps in the spectrum
of Laplacians on infinite covering graphs. Our results are also useful
in order to show the monotonicity of certain combinatorial numbers like,
e.g.\ the algebraic connectivity of a graph under elementary
perturbations.

\subsection{Spectral graph theory and combinatorics}
\label{sub:combinatorics}

In this subsection, we assume that all graphs are \emph{finite}.
\subsubsection{Spectral order of graphs} %
\label{subsec:spec.ord}
We begin mentioning some natural interaction between the preorder
relations $\less$ and $\lesse$ mentioned before and combinatorics.
Recall that the spectral preorder $\W \less \W'$ means that the
increasingly ordered list of eigenvalues $\lambda_k$ and $\lambda_k'$
of $\W$ and $\W'$, respectively (repeated according to their
multiplicity) fulfil $\lambda_k \le \lambda_k'$ for all indices $k$,
see \Def{spectral-order}.  Moreover, the geometric (pre)order
$\W \lesse \W'$ means that there is \aGAM-homomorphism
$\map \pi \W {\W'}$, i.e., a graph homomorphism respecting the
magnetic potential and fulfilling certain inequalities on the vertex
and edge weights, see \Def{hom.gam-graph}).

First, we apply the geometric perturbation and elementary operations
on graphs established in \Sec{geo} to present a new
spectral order of MW-graphs.  We illustrate the method for some simple graphs up
to order $6$ with combinatorial weights:
We have seen in \Prp{relation1} that for any fixed value
$t\in[0,2\pi]$ the family $\Co^t$ (see \Def{notation}) is partially
ordered with respect to $\lesse$.  In particular, the spectral
relations below include the cases of the combinatorial Laplacian (if
$t=0$) and the signless Laplacian (if $t=\pi$). In \Fig{order-graph},
we specify the spectral relations of a chain of simple graphs up to
order $6$. Note first that $\W_i\lesse\W_{i+1}$ for $1\leq i\leq 7$ is
a consequence of \Corenum{delete-edge}{delete-edge.b} and the fact any
two consecutive graphs from $\W_1$,\dots,$\W_8$ differ by an
edge. Moreover, $\W_8\lesse\W_{9}$ follows from
\Corenum{vert-contra}{vert-contra.b} since the graph $\W_9$ is
obtained from $\W_8$ by contracting the upper right vertex with the
lower right vertex.  Recall also that by \Thm{homomorphism} we
directly obtain also the relation $\W_i\less\W_{i+1}$,
$1\leq i\leq 8$.  Finally, note that $\W_9\lesse\W_{10}$ is false by
\Prp{inj-sur} since \aGAM-homomorphism $\W_9$ and $\W_{10}$ is
injective on the edges.  \Cor{delete-pendant} gives the relation
$\W_9\less\W_{10}$ because both graphs differ by a pendant edge.
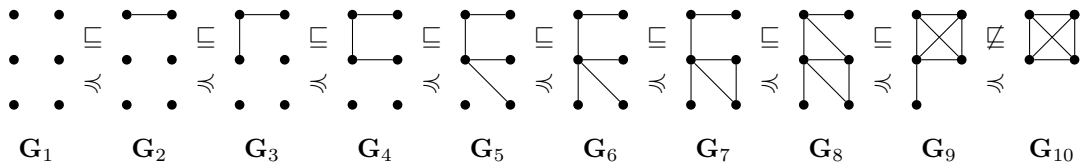
\begin{figure}[h]
  \centering
  {   \begin{tikzpicture}[auto, vertex/.style={circle,draw=black!100,fill=black!100, thick,
        inner sep=0pt,minimum size=1mm},scale=1.5]
      \foreach \x in {1,...,8} 
      { \node (A\x) at (\x,0) [vertex,label=below:] {};
        \node (B\x) at (\x,.4) [vertex,label=below:] {};
        \node (C\x) at (\x,.8) [vertex,label=below:] {};
        \node (D\x) at (\x+.4,0) [vertex,label=below:] {};
        \node (E\x) at (\x+.4,.4) [vertex,label=below:] {};
        \node (F\x) at (\x+.4,.8) [vertex,label=below:] {};	
        \node at (\x+.2,-.4) {$\W_{\x}$};   
      }
      \foreach \x in {9} 
      { \node (A\x) at (\x,0) [vertex,label=below:] {};
        \node (B\x) at (\x,.4) [vertex,label=below:] {};
        \node (C\x) at (\x,.8) [vertex,label=below:] {};
        \node (D\x) at (\x+.4,.8) [vertex,label=below:] {};
        \node (E\x) at (\x+.4,.4) [vertex,label=below:] {};
        \node (F\x) at (\x+.4,.8) [vertex,label=below:] {};	
        \node at (\x+.2,-.4) {$\W_{\x}$};   
      }
      \foreach \x in {10} 
      {
        \node (B\x) at (\x,.4) [vertex,label=below:] {};
        \node (C\x) at (\x,.8) [vertex,label=below:] {};
        \node (D\x) at (\x+.4,.8) [vertex,label=below:] {};
        \node (E\x) at (\x+.4,.4) [vertex,label=below:] {};
        \node (F\x) at (\x+.4,.8) [vertex,label=below:] {};	
        \node at (\x+.2,-.4) {$\W_{\x}$};   
      }
      
      \foreach \x in {1,...,8}  \node at (\x+.7,.6) {$\lesse$};	
      \foreach \x in {9}  \node at (\x+.7,.6) {$\not\lesse$};	
      \foreach \x in {1,...,9}  \node at (\x+.7,.2) {$\less$};	 
      \foreach \y in {2,...,10}  \path [-](C\y) edge node[right] {} (F\y);
      \foreach \y in {3,...,10} \path [-](B\y) edge node[right] {} (C\y);  
      \foreach \y in {4,...,10} \path [-](B\y) edge node[right] {} (E\y); 
      \foreach \y in {5,...,10} \path [-](B\y) edge node[right] {} (D\y); 
      \foreach \y in {7,...,10} \path [-](E\y) edge node[right] {} (D\y);
      \foreach \y in {6,...,9} \path [-](B\y) edge node[right] {} (A\y);
      \foreach \y in {8,...,10} \path [-](C\y) edge node[right] {} (E\y);
    \end{tikzpicture}}  
  \caption{Example of the preorders relations in $\Co^t$ for simple graphs up to order $n=6$. One has $\W_i\lesse\W_{i+1}$, $1\leq i\leq 9$ hence
  $\W_i\less\W_{i+1}$, $1\leq i\leq 8$. Moreover, $\W_9\less \W_{10}$ but  $\W_9\not\lesse \W_{10}$ showing that the geometric preorder is stronger 
  than the spectral preorder.}
  \label{fig:order-graph}
\end{figure}

A \emph{spanning} subgraph of a graph $\G$ is a subgraph $\G-E_0$
obtained from $\G$ by deleting all edges in $E_0$ where
$E_0\subset E(G)$.  Note that a spanning subgraph has the same set of
vertices of the original graph.  A \emph{spanning tree} of $\G$ is a
spanning subgraph which is a tree.  For example $\G_6$ is a spanning
tree for $\G_8$. We have the following simple consequence of
\Corenum{delete-edge}{delete-edge.b} and our definition of spectral
preorder $\less$ which generalises a result of
Fiedler~\cite[Corollary~3.2]{fiedler:73}, see
also~\cite[Section~3]{mohar:91}:
\begin{corollary}
  \label{cor:spanning.subgraph}
  Let $\W'=\W-E_0 \in \Co$ be a spanning subgraph of $\W \in \Co$,
  then $\lambda_k(\W') \le \lambda_k(\W)$ for all
  $k \in \{1,\dots,\card{V(\W)}\}$.
\end{corollary}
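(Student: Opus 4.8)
The plan is to reduce the statement to the single-edge case already settled in \Cor{delete-edge} and then iterate, using the transitivity of the spectral preorder. First I would write the (necessarily finite) edge set as $E_0=\{e_1,\dots,e_m\}$ and form the chain of \GAM-graphs $\W_0:=\W$ and $\W_j:=\W_{j-1}-e_j$ for $j=1,\dots,m$, so that $\W_m=\W'$. Each $\W_j$ again carries combinatorial weights, i.e.\ $\W_j\in\Co$: deleting an edge leaves the vertex set untouched and the combinatorial weight assigns the constant $1$ to every vertex and to every surviving edge, so membership in $\Co$ is inherited automatically; in particular $\card{V(\W_j)}=\card{V(\W)}$ for all $j$, consistent with $\W'$ being spanning.

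Next I would apply \Corenum{delete-edge}{cor.delete-edge.b} at each link of the chain: since $\W_j,\W_{j-1}\in\Co$ with $\W_j=\W_{j-1}-e_j$, that corollary yields $\W_j\lesse\W_{j-1}$, and hence $\W_j\less\W_{j-1}$. Invoking the transitivity of $\less$ with all shifts equal to $0$ (\Lemenum{basic}{basic:3}, cf.\ \Prp{relation2}) repeatedly along $j=1,\dots,m$ then gives $\W'=\W_m\less\dots\less\W_0=\W$, i.e.\ $\W'\less\W$. Unwinding \Def{spectral-order}, and using $\card{V(\W')}=\card{V(\W)}$, this is precisely $\lambda_k(\W')\le\lambda_k(\W)$ for all $k\in\{1,\dots,\card{V(\W)}\}$.

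I do not expect any genuine obstacle: all the analytic content already sits in \Cor{delete-edge} (ultimately in \Thms{delete-edge}{homomorphism} via the min-max principle), and the only thing to verify is the elementary bookkeeping that the class $\Co$ and the vertex count are preserved along the chain. If one prefers a one-step argument to the induction, one may instead observe directly that the iterated inclusion $\map{\iota}{\W'}{\W}$ is a \GAM-homomorphism between graphs with constant edge weight $1$ (a composition of maps as in \Exenum{gam-homo}{ex:gam-homo3}, using \Exenum{gam-homo}{ex:gam-homo5}), so that $\W'\lesse\W$ and \Thm{homomorphism} yields $\W'\less\W$ in one stroke.
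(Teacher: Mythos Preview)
Your proposal is correct and follows exactly the route the paper indicates: the paper states the corollary as ``a simple consequence of \Corenum{delete-edge}{cor.delete-edge.b} and our definition of spectral preorder $\less$'', which is precisely your iterated deletion argument. Your alternative one-step observation that the inclusion $\map{\iota}{\W'}{\W}$ is itself an \GAM-homomorphism (so that $\W'\lesse\W$ and \Thm{homomorphism} applies directly) is also valid and arguably the cleanest formulation.
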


\subsubsection{Cliques and stability of eigenvalues} %
Let $\G=(V,E,\bd)$ be a graph and $d \in \N$.  A \emph{$d$-clique} of
$\G$ is an induced subgraph $\G[V_0]$ ($V_0\subset V$) isomorphic to
the complete graph $K_d$ of order $d=\card{V_0}$ (see
\Def{induced-subgraph}).  A $d$-clique is \emph{maximal} if it is not
a subgraph of a $(d+1)$-clique of $\G$.  The \emph{clique number} of
$\G$ is the maximal $d$ such that $\G$ has a $d$-clique.
The notion of a clique can be naturally extended to \GAM-graphs by
restricting the weights and vector potential to the corresponding
substructures.  For simplicity, we will denote the $d$-clique by
$K_d$. All graphs here are assumed to have the combinatorial
weight.

In the next theorem, we will apply the geometric and spectral preorder
relations given in \Defs{relation1}{spectral-order} to identify the
eigenvalue $d$ in the spectrum of the Laplacian of the graph with a
$d$-clique and to give a lower bound of its multiplicity.  Roughly
speaking, the \emph{clique number} $d$ of a graph can be seen in its
(combinatorial) spectrum for graphs with number of edges in a certain
range depending on $d$, see Eq.~\eqref{eq:edge.adm}.
\begin{theorem}
  \label{thm:cliques}
  Let $\W$ be a connected graph with combinatorial weights having $m$
  edges and a maximal $d$-clique.  Assume that $m < (d-1)(d+2)/2$ then
  $d$ is in the spectrum of $\W$ with multiplicity at least
  \begin{equation*}
    \frac{(d-1)(d+2)}2 - m=d-r-1,
  \end{equation*}
  where $r=m-d(d-1)/2$ is the number of edges of $G$ not in the clique.
\end{theorem}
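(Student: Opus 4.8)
The plan is to realise $\W$ as the clique $K_d$ with finitely many edges inserted (inside a graph carrying some isolated vertices), and to track how the high-multiplicity eigenvalue $d$ of $K_d$ degrades under each insertion. Let $V_0\subseteq V(\W)$ with $\card{V_0}=d$ be the vertex set of the maximal clique, so $\G[V_0]\simeq K_d$, and set $E_0:=E(\W)\setminus E(\G[V_0])$, the edges of $\W$ not lying in the clique. Then $\card{E_0}=r=m-d(d-1)/2$, and the hypothesis $m<(d-1)(d+2)/2$ is exactly $r\le d-2$, so that $(d-1)(d+2)/2-m=(d-1)-r\ge 1$. Since the eigenvalue statement is gauge invariant (\Prp{sunada.weight}) and the magnetic potential restricted to the clique is (assumed) trivial — without this $d$ need not be an eigenvalue at all — a gauge transformation lets us assume $\alpha_e=0$ for every $e\in E(\G[V_0])$; the values of $\alpha$ on $E_0$ will be irrelevant.

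Next I would treat the base graph $\W_0:=\W-E_0\in\Co$, the spanning subgraph of $\W$ retaining only the clique edges. It is the disjoint union of $\G[V_0]\simeq K_d$ with $n-d$ isolated vertices, $n:=\card{V(\W)}$. The combinatorial Laplacian of $K_d$ equals $dI_d-J_d$, whose spectrum is $(0,d,\dots,d)$ with $d$ appearing $d-1$ times; each isolated vertex contributes one further eigenvalue $0$ (the defining sum over $E_v$ being empty, cf.\ \Rem{std.deg.zero}). Hence $d$ is an eigenvalue of $\W_0$ of multiplicity exactly $d-1$ (and is its largest eigenvalue).

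Then I would put back the missing edges one at a time: enumerate $E_0=\{e_1,\dots,e_r\}$ and set $\W_j:=\W_{j-1}+e_j$, so $\W_r=\W$ and all $\W_j\in\Co$ share the vertex set, $\card{\W_j}=n$. Applying \Corenum{delete-edge}{cor.delete-edge.b} to $\W_j$ and $\W_{j-1}=\W_j-e_j$ yields $\W_j\less[1]\W_{j-1}$ and $\W_{j-1}\less\W_j$, i.e.
\[
  \lambda_k(\W_{j-1})\le\lambda_k(\W_j)\le\lambda_{k+1}(\W_{j-1}),\qquad 1\le k\le n-1.
\]
From this one reads off that inserting a single edge lowers the multiplicity of any real $\mu$ by at most $1$: if $\lambda_k(\W_{j-1})=\dots=\lambda_{k+p-1}(\W_{j-1})=\mu$ is a maximal block of length $p$, then for $k\le i\le k+p-2$ (so $i+1\le n$) the sandwich gives $\mu=\lambda_i(\W_{j-1})\le\lambda_i(\W_j)\le\lambda_{i+1}(\W_{j-1})=\mu$, hence $\lambda_i(\W_j)=\mu$ for these $p-1$ consecutive indices. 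Iterating over $j=1,\dots,r$ and using \Lemenum{basic}{basic:3}, the multiplicity of $d$ in $\W=\W_r$ is at least $(d-1)-r=(d-1)(d+2)/2-m\ge 1$; in particular $d\in\sigma(\W)$, with the claimed multiplicity.

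The only delicate point is the one-edge multiplicity bound when the $\mu$-block sits at the very top of the spectrum, where the interlacing is one-sided; but the argument above never invokes a nonexistent $\lambda_{n+1}$, so it is valid verbatim — everything else is bookkeeping, once one spots that $\W-E_0$ is the right graph to start from. As an alternative one can avoid the perturbation chain altogether: the functions $\phi$ supported on $V_0$, with $\sum_{v\in V_0}\phi(v)=0$, that vanish at the (at most $r$) clique-vertices incident to an edge of $E_0$, form a space of dimension $\ge(d-1)-r$, and a direct computation using $\G[V_0]\simeq K_d$ and $\alpha=0$ on the clique shows each such $\phi$ satisfies $\Delta_\alpha\phi=d\phi$, giving the same bound.
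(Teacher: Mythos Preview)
Your proof is correct and takes a genuinely different route from the paper's.

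The paper reduces $\W$ all the way down to $\mathbf K_d$ itself (with no extra vertices) in two stages: first it deletes $p=b_1(\G)-b_1(K_d)$ carefully chosen edges to kill the cycles outside the clique, obtaining $\W\less[p]\W_1\less\W$; then it strips off the remaining $n-d$ edges as pendant edges, obtaining $\W_1\less\mathbf K_d\less[n-d]\W_1$. Composing gives the two-sided bracket $\W\less[p]\mathbf K_d\less[n-d]\W$, and the multiplicity is read off from the resulting double inequality $d=\mu_k\le\lambda_{k+n-d}(\W)\le\mu_{k+n-d+p}=d$. You instead pass to $\W_0=\W-E_0$, which is $K_d$ together with $n-d$ isolated vertices, and then add back all $r$ non-clique edges one by one, using the single-edge interlacing of \Corenum{delete-edge}{cor.delete-edge.b} to show that each insertion drops the multiplicity of $d$ by at most one. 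Since $p+(n-d)=r$, the two approaches use the same total amount of ``shift'' and arrive at the same bound $d-1-r$; yours is more elementary (no Betti-number bookkeeping, no need to choose which edges to delete first to keep the rest pendant) and makes the mechanism---multiplicity drops by at most one per edge---transparent, while the paper's version showcases the global bracketing relation $\W\less[s]\mathbf K_d\less[t]\W$ that it also exploits in other applications.

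Two small remarks. First, the paper's proof explicitly sets $\alpha=0$ at the outset (``Let $\W=(\G,0,\com)$''), so your gauge discussion is unnecessary here; the theorem is really about the ordinary combinatorial Laplacian. Second, your alternative argument via explicit eigenfunctions (supported on $V_0$, summing to zero, vanishing at the $\le r$ clique-vertices that meet $E_0$) is a third route that bypasses the preorder machinery entirely; it works for exactly the reason you say, and is perhaps the most direct proof of all.
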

\begin{remark}
  \label{rem:cliques}
  \indent
  \begin{enumerate}
  \item
    \label{cliques.a}
    \Thm{cliques} applies to graphs with a maximal $d$-clique and
    number of edges $m$ fulfilling
    \begin{equation}
      \label{eq:edge.adm}
      m \in \Bigl\{\frac {d(d-1)}2, \dots, 
      \frac{(d+2)(d-1)}2-1\Bigr\}.
    \end{equation}
    Let us call such numbers of edges \emph{$d$-admissible}.  For
    a given $d$, the above list of $d$-admissible numbers of edges has
    $d-1$ entries; for each one the multiplicity of the eigenvalue $d$
    is fixed, independently of the number $n$ of vertices of $\G$.
    Nevertheless since $\G$ has a $d$-clique, we have $n\ge d$ and
    since $\G$ is connected, we have $n \le d+r=m-d(d-3)/2$.

  \item
    \label{cliques.b}
    Note that we also allow multiple edges here.  For example if $G$
    is the complete graph with three vertices and one double edge
    (hence $m=4$), then its spectrum is $(0,3,5)$.  This graph has a
    $3$-clique by deleting one of the double edges, and $d=3$ has
    multiplicity (here exactly) $1$, as predicted by the theorem.
    Similarly, if $G$ is $K_4$ with one extra double edge, then the
    spectrum is $(0,4,4,6)$, and $d=4$ is a double eigenvalue, again
    as predicted.

  \item
    \label{cliques.c}
    In the proof of \Thm{cliques}, we do not need that the $d$-clique
    is \emph{maximal}, but considering $G$ as a graph with a
    $(d-1)$-clique then the range of $(d-1)$-admissible numbers of
    edges is disjoint from the range of $d$-admissible numbers of
    edges.  In particular, the theorem only makes sense for
    \emph{maximal} $d$-cliques.
  \end{enumerate}
\end{remark}

In~\cite{mohar:92}, Mohar excludes certain cycles as subgraphs by just
looking at the spectrum of the graph.  Here, we can exclude
$d$-cliques spectrally:
\begin{corollary}
  \label{cor:cliques}
  Assume that $\W$ is a connected graph.  If $d \in \N$ is not in the
  spectrum of $\W$, and if $\W$ has less than $(d-1)(d+2)/2$ edges,
  then $\W$ has no $d$-clique.
\end{corollary}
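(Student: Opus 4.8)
The plan is to derive this statement as the contrapositive of \Thm{cliques}. So I would suppose, for contradiction, that $\W$ is a connected graph with combinatorial weights, has $m$ edges with $m < (d-1)(d+2)/2$, and nevertheless contains a $d$-clique $K_d$; the task is then to exhibit $d$ as an eigenvalue of its magnetic weighted Laplacian, contradicting $d \notin \sigma(\W)$.

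The only point that needs a short preliminary argument before \Thm{cliques} can be invoked verbatim is that the $d$-clique is automatically \emph{maximal} under the stated edge bound. First I would recall that $K_d$ has $\binom{d}{2} = d(d-1)/2$ edges, whereas any $(d+1)$-clique would contain $\binom{d+1}{2} = (d+1)d/2$ edges. Since $(d-1)(d+2)/2 = (d+1)d/2 - 1$, the hypothesis $m < (d-1)(d+2)/2$ forces $m \le (d+1)d/2 - 1$, strictly fewer than the number of edges of $K_{d+1}$. Hence the $d$-clique cannot be enlarged to a $(d+1)$-clique inside $\W$, i.e.\ it is maximal; in particular no passage to a larger maximal clique (which would alter the relevant eigenvalue) is necessary.

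Now every hypothesis of \Thm{cliques} is in force: $\W$ is connected with combinatorial weights, has $m < (d-1)(d+2)/2$ edges, and possesses a maximal $d$-clique. The theorem then yields that $d$ lies in the spectrum of $\W$ with multiplicity at least $(d-1)(d+2)/2 - m$. Because $m < (d-1)(d+2)/2$, this integer is at least $1$, so $d$ is genuinely an eigenvalue of the magnetic weighted Laplacian of $\W$, contradicting the assumption $d \notin \sigma(\W)$. This contradiction proves that $\W$ has no $d$-clique.

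The argument is essentially immediate once \Thm{cliques} is available, so there is no real obstacle; the one thing worth isolating as a lemma-like step is the observation above that the edge count in the hypothesis already guarantees maximality of the $d$-clique, which is exactly what makes the contrapositive go through without loss of control over which eigenvalue appears.
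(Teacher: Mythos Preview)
Your proof is correct and matches the paper's approach: the corollary is stated without a separate proof, as it is the immediate contrapositive of \Thm{cliques}. Your extra care in verifying that the edge bound forces maximality of the $d$-clique is a valid way to meet the literal hypothesis of the theorem, though the paper itself observes in \Remenum{cliques}{cliques.c} that maximality is not actually used in the proof of \Thm{cliques}, so this step could be bypassed.
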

\begin{remark*}
  \indent
  \begin{enumerate}
  \item If the number of edges $m$ is below $d(d-1)/2$, then obviously
    $K_d$ cannot be a subgraph, the other values of $m$ are
    admissible.

  \item The converse of the above corollary is false (or the
    conclusion of \Thm{cliques} can be true also for graphs without a
    $d$-clique): the Petersen graph has (combinatorial Laplace)
    spectrum $(0,2_5,5_4)$ (the subscript indicating the
    multiplicity), hence $d=5$ is in its spectrum with multiplicity
    $4$ as said in \Thm{cliques}.  Also, the number of edges ($m=10$)
    is $5$-admissible, see Eq.~\eqref{eq:edge.adm}.  But the clique number
    of the Peterson graph is $2$ (and not $5$).
  \end{enumerate}
\end{remark*}
\begin{proof}[Proof of \Thm{cliques}]
  The strategy of the proof is to delete suitable edges on the
  complement of the maximal clique and control the spectral shifts
  $s,t$ so that we finally obtain relations
  $\W \less[s] \mathbf K_d \less[t] \W$.  Then we exploit the fact
  that for combinatorial weights the eigenvalue
  $d\in\sigma(\mathbf K_d)$ has high multiplicity, namely multiplicity
  $d-1$.
  
  Let $\W=(\G,0,\com)$ and set $n=\card{V(\G)}$.  The first Betti
  numbers of $\G$ and $K_d$ are $b_1(\G):=m-n+1$ and
  $b_1(K_d):= d(d-1)/2-d+1$ (see e.g.~\cite[Section~4]{sunada:12}).
  Denote its difference by
  \begin{equation*}
    p:=b_1(\G)-b_1(K_d)=m-n-\frac{d(d-3)}{2}.
  \end{equation*}
  Let $\wt E:=E(G)\setminus E(K_d)$.  We delete $p$ edges $\widehat E$
  from $\wt E$ in such a way that no cycles are present in the
  complement of the clique.  Construct first a subgraph
  $\G_1=\G-\widehat{E}=(V_1,E_1,\bd)$ with $V_1=V(G)$ and
  $E_1=E(K_d)\cup \left(\widetilde{E}\setminus\widehat{E}\right)$.
  Applying iteratively \Corenum{delete-edge}{cor.delete-edge.b} we obtain
  \begin{equation}
    \label{eq:graph1}
    \W \less[p] \W_1 \less \W.
  \end{equation}
  Note that the graph $\G_1-E(K_d)$ is a forest. Next, we delete all
  $n-d$ edges of this forest starting from the leaves and proceeding
  towards the $d$-clique $K_d$.  (Note that it is important to delete
  only leaves in this process, as otherwise the spectral shift is not
  optimal.)  Then, applying iteratively
  \Corenum{delete-pendant}{delete-pendant.b}, we obtain
  \begin{equation*}
    \W_1 \less \mathbf K_d \less[n-d] \W_1.
  \end{equation*}
  Using the transitivity of the relation $\less$ as well as
  Eq.~\eqref{eq:graph1} we get finally the relations
  \begin{equation}
    \label{eq:Kd}
    \W \less[p] \mathbf K_d \less[n-d] \W.
  \end{equation}
  Let $\lambda_1, \dots, \lambda_n$ resp.\ $\mu_1, \dots, \mu_d$ be
  the spectrum of (the combinatorial Laplacians of) $\W$ resp.\
  $\mathbf K_d$; as usual written in ascending order and repeated
  according to their multiplicities.  For the complete graph of order
  $d$ we have $\mu_1=0$ and $\mu_k=d$ ($k=2,\dots,d$).  The relations
  in Eq.~\eqref{eq:Kd} imply
  \begin{equation*}
    \lambda_k\leq \mu_{k+p}
    \quadtext{for} 1 \leq k\leq d-p
    \quadtext{and}
    \mu_k\leq \lambda_{k+(n-d)} 
    \quadtext{for} 1\leq k\leq d.
  \end{equation*}
  Note that the relations between the orders of the graphs and the
  spectral shift needed in \Def{spectral-order} are automatically
  satisfied in our case since $\card{E(K_d)}=d(d-1)/2 \leq
  m$. Combining the preceding inequalities, we obtain
  \begin{equation*}
    d
    = \mu_k\leq \lambda_{k+n-d}
    \leq \mu_{k+(n-d)+p}
    =d \quadtext{for} 2\leq k\leq 2d-n-p,
  \end{equation*}
  hence $d$ is an eigenvalue of $\W$ with multiplicity given by
  \begin{equation*}
    2d-n-p-1
    =\frac{d(d+1)}{2}-m-1
    =\frac{(d-1)(d+2)}{2}-m
  \end{equation*}
  which completes the proof.
\end{proof}

\begin{example}
  \label{ex:subgraph}
  We illustrate the preceding theorem with some examples having
  combinatorial weights, no magnetic potential and a maximal
  $6$-clique as shown in \Fig{high-mult}. Concretely, the graphs
  $\G_1$, $\G_2$ and $\G_3$ all have $m=17$ edges and a maximal
  $6$-clique $K_6$; moreover $\G_1$ and $\G_2$ have $8$ vertices,
  while $\G_3$ has $7$ vertices.  All three graphs have $d=6$ as
  eigenvalue in its spectrum with multiplicity at least $3$.  The
  range of admissible number of edges for $d=6$ here is
  $m \in \{15,16,17,18,19\}$.  The (minimal) multiplicity of the
  eigenvalue $d=6$ is then $5,4,3,2,1$.  In \Fig{high-mult}, we have
  $m=17$ and minimal multiplicity $3$.
  \begin{figure}[h]
    \centering
    \subfloat[{$\G_1$}\label{subfig:high-mult4}]	{  \begin{tikzpicture}[auto, vertex/.style={circle,draw=black!100,fill=black!100, thick,
          inner sep=0pt,minimum size=1mm}]
        \color{black}
        \node (A1) at (1/2,-2/2) [vertex,color=black,label=below:] {};
        \node (A2) at (-1/2,-2/2) [vertex,color=black,label=below:] {};
        \node (A3) at (-2/2,0) [vertex,color=black,label=left:] {};
        \node (A4) at (-1/2,2/2) [vertex,color=black,label=above:] {};
        \node (A5) at (1/2,2/2) [vertex,color=black,label=above:] {};
        \node (A6) at (2/2,0) [vertex,color=black,label=right:] {};
        \foreach \y in {1,...,6} { 
          \foreach \x in {1,...,6} {
            \path [-](A\x) edge node[right] {} (A\y);}
        }
        \node (A7) at (-1,1.25) [vertex,color=black,label=right:] {};
       	\node (A8) at (-1.25,.75) [vertex,color=black,label=right:] {};	
        \path [dashed,color=black,](A4) edge node[right] {} (A7);
        \path [dashed,color=black,](A7) edge node[right] {} (A8);
      \end{tikzpicture}} 
    \subfloat[{$\G_2$} \label{subfig:high-mult5}]	{  \begin{tikzpicture}[auto, vertex/.style={circle,draw=black!100,fill=black!100, thick,
          inner sep=0pt,minimum size=1mm}]
        \color{black}
        \node (A1) at (1/2,-2/2) [vertex,color=black,label=below:] {};
        \node (A2) at (-1/2,-2/2) [vertex,color=black,label=below:] {};
        \node (A3) at (-2/2,0) [vertex,color=black,label=left:] {};
        \node (A4) at (-1/2,2/2) [vertex,color=black,label=above:] {};
        \node (A5) at (1/2,2/2) [vertex,color=black,label=above:] {};
        \node (A6) at (2/2,0) [vertex,color=black,label=right:] {};
        \foreach \y in {1,...,6} { 
          \foreach \x in {1,...,6} {
            \path [-](A\x) edge node[right] {} (A\y);}
        }
        \node (A7) at (-1,1.25) [vertex,color=black,label=right:] {};
        \node (A8) at (-1.25,.75) [vertex,color=black,label=right:] {};	
        \path [dashed,color=black,](A4) edge node[right] {} (A7);
        \path [dashed,color=black,](A3) edge node[right] {} (A8);
      \end{tikzpicture}}
    \subfloat[{$\G_3$} \label{subfig:high-mult6}]	{  \begin{tikzpicture}[auto, vertex/.style={circle,draw=black!100,fill=black!100, thick,
          inner sep=0pt,minimum size=1mm}]
	\color{black}
	\node (A1) at (1/2,-2/2) [vertex,color=black,label=below:] {};
	\node (A2) at (-1/2,-2/2) [vertex,color=black,label=below:] {};
	\node (A3) at (-2/2,0) [vertex,color=black,label=left:] {};
	\node (A4) at (-1/2,2/2) [vertex,color=black,label=above:] {};
	\node (A5) at (1/2,2/2) [vertex,color=black,label=above:] {};
	\node (A6) at (2/2,0) [vertex,color=black,label=right:] {};
	\foreach \y in {1,...,6} { 
          \foreach \x in {1,...,6} {
            \path [-](A\x) edge node[right] {} (A\y);}
	}
	\node (A7) at (-1.25,.95) [vertex,color=black,label=right:] {};
	\path [dashed,color=black,](A4) edge node[right] {} (A7);
	\path [dashed,color=black,](A3) edge node[right] {} (A7);
      \end{tikzpicture}\qquad}	
    \caption{An illustration of the Theorem~\ref{thm:cliques} for
      $6$-clique.  Let $\W_1$, $\W_2$ and $\W_3$ be the \GAM-graphs
      (with combinatorial weight and no magnetic potential) defined by
      the graphs $G_1$, $G_2$ and $G_3$ respectively.  All graphs have
      $d=6$ in its spectrum with multiplicity at least $3$. }
    \label{fig:high-mult}
  \end{figure}
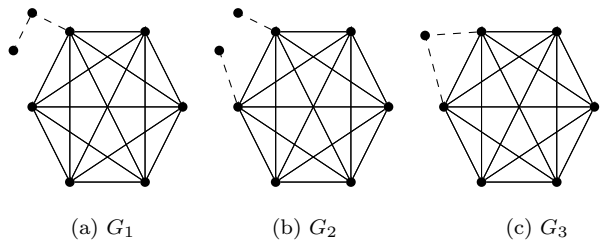
\end{example}

\subsubsection{Minors} %
\label{subsec:minors}
A fundamental notion in combinatorics is that of a graph minor.
Several fundamental results in this field are presented in terms of
minors (e.g.\ in the Robertson-Seymour theory
\cite[Chapter~12]{diestel:00}).  A graph $H$ is called a \emph{minor}
of a given graph $G$ if $H$ is obtained from $G$ by applying certain
elementary operations. We can generalise this construction to
MW-graphs and apply the results of the previous sections to give a
spectral relation between a graph and its minor. We consider the
following three elementary operations:
\begin{itemize}
\item Deleting an edge (\Subsec{delete-edge}),
\item Contracting an edge (\Subsec{edge-contra}),
\item Deleting pendant vertex (the same as contracting a pendant edge,
  \Subsec{delete-pendant}).
\end{itemize}
If $\W'\in\Ga$ is obtained from $\W$ by successive application of the
previous operations, then we say that $\W'$ is a \emph{minor} of $\W$
(see, e.g.~\cite{bondy-murty:08}).

\begin{proposition}
 \label{prp:minor-deg}
 Let $\W\in\Ga$ be a simple graph without magnetic potential and let
 $\W'$ be a minor of $\W$ obtained by deleting $p$ edges, contracting
 $q$ edges and deleting $s$ pendant vertices.
 \begin{enumerate}
 \item
   \label{minor-deg.b}
   If $\W,\W'\in\Co$, then $\W \less[p] \W' \less[r+s] \W$ where
   $r=\sum_{e\in E_0} \min\{\deg (\bd_+ e_0),\deg (\bd_- e_0)\}(\ge
   q)$ and $E_0$ is the set of $q$ edges, that are contracted.
 \item
   \label{minor-deg.a}
   If $\W,\W'\in\De$, then
   $\W \less[p+q-q'] \W' \less[p+q+s] \W$ where $q'$ is the number of
   \emph{bridge} edges that are contracted.
 \end{enumerate}
\end{proposition}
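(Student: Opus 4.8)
The plan is to realise $\W'$ as the last term of a chain of elementary perturbations of the kinds treated in \Sec{geo} and to add up the spectral shifts along the chain using the transitivity of $\less[\cdot]$ from \Lemenum{basic}{basic:3}. Write $\W=\W^{(0)},\W^{(1)},\dots,\W^{(N)}=\W'$ with $N=p+q+s$, where each $\W^{(j+1)}$ is obtained from $\W^{(j)}$ by one edge deletion, one edge contraction, or one pendant-vertex deletion; since none of these operations creates a magnetic potential, every $\W^{(j)}$ is a \GAM-graph with $\alpha\equiv0$, so in particular every contracted edge carries $\alpha_{e_0}=0$. I would argue by induction on the length $N$ of such a chain (the assertion being quantified over all chains of elementary operations starting at a simple, potential-free $\W$), that $\W$ and $\W^{(N)}$ satisfy the claimed two-sided relations with the stated shifts. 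The base case $N=0$ is $\W\less\W\less\W$; in the inductive step I combine the relation between $\W$ and $\W^{(N-1)}$ coming from the inductive hypothesis for the chain of length $N-1$ with the elementary relation between $\W^{(N-1)}$ and $\W^{(N)}$, and apply \Lemenum{basic}{basic:3} twice (once in each direction).

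For \itemref{minor-deg.b} the elementary inputs are: an edge deletion gives $\W^{(j)}\less[1]\W^{(j+1)}\less\W^{(j)}$ by \Corenum{delete-edge}{cor.delete-edge.b}; a contraction of an edge $e_0$ with $\alpha_{e_0}=0$ gives $\W^{(j)}\less\W^{(j+1)}\less[r_j]\W^{(j)}$ with $r_j=\min\{\deg(\bd_+e_0),\deg(\bd_-e_0)\}$ computed in $\G^{(j)}$, by \Thmenum{edge-contra}{edge-contra.b}; and a pendant-vertex deletion gives $\W^{(j)}\less\W^{(j+1)}\less[1]\W^{(j)}$ by \Corenum{delete-pendant}{delete-pendant.b}. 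Only the deletions contribute to the left-hand shift, which therefore totals $p$; the right-hand shift totals $r+s$, where $r=\sum r_j$ over the $q$ contractions, and $r\ge q$ since each $r_j\ge1$ ($e_0$ being a non-loop edge). Transitivity yields $\W\less[p]\W'\less[r+s]\W$.

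For \itemref{minor-deg.a} the inputs are: an edge deletion gives $\W^{(j)}\less[1]\W^{(j+1)}\less[1]\W^{(j)}$ by \Corenum{delete-edge}{cor.delete-edge.a}; a contraction of $e_0$ with $\alpha_{e_0}=0$ gives $\W^{(j)}\less[1]\W^{(j+1)}\less[1]\W^{(j)}$ in general, improving to $\W^{(j)}\less\W^{(j+1)}\less[1]\W^{(j)}$ when $e_0$ is a bridge of $\G^{(j)}$, by \Thmenum{edge-contra}{edge-contra.a}; and a pendant-vertex deletion gives $\W^{(j)}\less\W^{(j+1)}\less[1]\W^{(j)}$ by \Corenum{delete-pendant}{delete-pendant.a}. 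Bridge contractions and pendant deletions contribute $0$ to the left-hand shift, which therefore totals $p+(q-q')$ with $q'$ the number of bridge contractions; every operation contributes $1$ to the right-hand shift, which totals $p+q+s$. Transitivity gives $\W\less[p+q-q']\W'\less[p+q+s]\W$.

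The delicate point — and the only real obstacle — is the reordering bookkeeping: being simple and being a bridge are not preserved by contraction, so $r$ in \itemref{minor-deg.b} and $q'$ in \itemref{minor-deg.a} must be understood relative to the intermediate graph $\G^{(j)}$ at the moment of the relevant contraction (a multiple edge produced by an earlier contraction is never a bridge, hence is correctly counted in $q-q'$). Moreover, when the edge contracted at step $j$ is one of several parallel edges, \Thm{edge-contra} does not apply verbatim, but the contraction still factors through a vertex identification (\Thm{vert-contra}, valid for arbitrary graphs) followed by deletion of the resulting $\alpha=0$ loops (\Cor{delete-edge-loop}); tracking the shifts through that factorisation produces bounds no worse than those tabulated above, so the stated inequalities persist. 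Finally, since all the elementary statements invoked are phrased for $\Co$ and $\De$, neither of which assumes simplicity, the induction stays within the correct weight class at every step.
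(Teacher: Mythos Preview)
Your proposal is correct and is exactly the argument the paper has in mind: the proposition is stated without proof precisely because it is meant to follow by iterated application of \Cor{delete-edge}, \Thm{edge-contra} and \Cor{delete-pendant}, combined via the transitivity in \Lemenum{basic}{basic:3}. Your bookkeeping of the left- and right-hand shifts matches the intended tally in both the combinatorial and standard cases.

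Your closing paragraph actually goes further than the paper does. You correctly flag that the quantities $r$ (in \itemref{minor-deg.b}) and $q'$ (in \itemref{minor-deg.a}) are most naturally read as being computed in the intermediate graphs $\G^{(j)}$ at the moment of contraction, and that \Thm{edge-contra} carries a simplicity hypothesis on $e_0$ that may fail in those intermediates. The paper is silent on both points; your workaround via \Thm{vert-contra} plus \Cor{delete-edge-loop} for the multi-edge case is a legitimate way to recover the stated bounds, and your observation that a newly created parallel edge is never a bridge keeps the $q'$ count honest. These refinements do not change the conclusion but make the argument more careful than the paper's implicit one.
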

A simple consequence of the spectral preorder is the following:
\begin{corollary}
  \label{cor:minor-deg}
  Let $\W \in \Co$, If $\W' \in \Co$ is a minor of $\W$ obtained as in
  \Prp{minor-deg}, then an eigenvalue of $\W$ of multiplicity
  $m \ge p+r+s+1$ remains and eigenvalue of $\W'$ of multiplicity
  $m-p-r-s$.

  Similarly, for $\W, \W' \in \De$, an eigenvalue of $\W$ of
  multiplicity $m \ge 2p+2q+s-q'+1$ is an eigenvalue of $\W'$ of
  multiplicity $m-2p-2q-s+q'$.
\end{corollary}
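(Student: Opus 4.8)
The plan is to derive \Cor{minor-deg} directly from \Prp{minor-deg} together with a simple multiplicity-counting argument that is already implicit in the proof of \Thm{cliques}. The key observation is the following elementary lemma about the shifted preorder: if $\Lambda \less[a] \Lambda' \less[b] \Lambda$ and $\lambda$ is an eigenvalue of $\Lambda$ with multiplicity $m$, occupying positions $j+1,\dots,j+m$ in the increasing list (so $\lambda_{j}<\lambda_{j+1}=\dots=\lambda_{j+m}=\lambda<\lambda_{j+m+1}$, with the obvious conventions at the ends), then $\lambda$ is an eigenvalue of $\Lambda'$ with multiplicity at least $m-a-b$. Indeed, from $\Lambda' \less[b] \Lambda$ we get $\lambda'_{k} \le \lambda_{k+b}$, and from $\Lambda \less[a] \Lambda'$ we get $\lambda_{k} \le \lambda'_{k+a}$; chaining these for suitable ranges of $k$ squeezes $\lambda'_{k}=\lambda$ for all $k$ in a window of length $m-a-b$ (precisely, for $j+1-a \le k \le j+m-b$ after relabelling, exactly as in the computation $d=\mu_k\le\lambda_{k+n-d}\le\mu_{k+(n-d)+p}=d$ in the proof of \Thm{cliques}). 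This is a clean statement worth isolating, and its proof is two lines of inequality-chasing using \Def{with-shift}.

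With that lemma in hand, the corollary is immediate. First I would invoke \Prpenum{minor-deg}{minor-deg.b}: for $\W,\W'\in\Co$ we have $\W \less[p] \W' \less[r+s] \W$, so taking $a=p$ and $b=r+s$ the lemma gives that any eigenvalue of $\W$ of multiplicity $m \ge p+r+s+1$ (so that $m-a-b \ge 1$) is an eigenvalue of $\W'$ of multiplicity at least $m-p-r-s$. One should also check that the length bookkeeping is consistent: deleting $p$ edges leaves the vertex count unchanged, contracting $q$ edges lowers it by $q$, and deleting $s$ pendant vertices lowers it by a further $s$, so $\card{\W'}=\card{\W}-q-s$, and the required inequalities $\card{\W}+p \ge \card{\W'}$ and $\card{\W'}+(r+s) \ge \card{\W}$ follow from $r \ge q$ (the parenthetical remark in \Prp{minor-deg}) and $p,q,s\ge 0$; this guarantees the windows in the lemma are nonempty exactly when $m-p-r-s\ge 1$. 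For the standard-weight case I would instead invoke \Prpenum{minor-deg}{minor-deg.a}, which gives $\W \less[p+q-q'] \W' \less[p+q+s] \W$; applying the lemma with $a=p+q-q'$ and $b=p+q+s$ yields that an eigenvalue of multiplicity $m \ge a+b+1 = 2p+2q+s-q'+1$ persists in $\W'$ with multiplicity at least $m - 2p-2q-s+q'$, which is exactly the claimed bound.

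I do not anticipate a serious obstacle here: the entire content is the squeezing lemma, and that is routine once one writes out \Def{with-shift} carefully — the only thing to be slightly careful about is the edge-of-spectrum indexing (when $\lambda$ is the smallest or largest eigenvalue, the positions $j$ or $j+m+1$ may not exist), but this only makes the window larger, never smaller, so the stated lower bounds on the multiplicity still hold. If I wanted to keep the write-up maximally short I would simply say ``this follows by the same multiplicity argument as in the proof of \Thm{cliques}, now applied to the relations of \Prp{minor-deg} in place of Eq.~\eqref{eq:Kd}'', since the computation there ($2d-n-p-1 = \tfrac{(d-1)(d+2)}2 - m$) is the prototype of exactly this counting. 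Either way, the proof is three or four sentences: cite \Prp{minor-deg}, invoke the squeeze, read off the two multiplicity bounds.
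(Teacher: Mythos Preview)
Your proposal is correct and is precisely the argument the paper has in mind: the paper gives no explicit proof of \Cor{minor-deg}, introducing it only as ``a simple consequence of the spectral preorder'', and your squeezing lemma (an eigenvalue of multiplicity $m$ in $\Lambda$ survives with multiplicity at least $m-a-b$ in $\Lambda'$ whenever $\Lambda \less[a] \Lambda' \less[b] \Lambda$) applied to the two cases of \Prp{minor-deg} is exactly that consequence, mirroring the computation already carried out in the proof of \Thm{cliques}. The one small slip is the stated window ``$j+1-a \le k \le j+m-b$''; the correct range in the $\Lambda'$-indexing is $j+1+a \le l \le j+m-b$, but your count $m-a-b$ and the rest of the argument are unaffected.
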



\subsection{Cheeger constants and frustration index under
  \GAM-homomorphisms}
\label{subsec:cheeger}

The Cheeger constant is a quantitative measure of the connectedness of
a graph.  It can be used in a lower bound on the second (first
non-zero) eigenvalue of a graph, usually called \emph{Cheeger
  inequality} (and also in an upper bound).  Probably the first
occurrence of a lower bound on the second eigenvalue in terms of
geometric quantities is in~\cite[Section~4.3]{fiedler:73};
Dodziuk~\cite[Theorem~2.3]{dodziuk:84} proves a Cheeger inequality for
the combinatorial Laplacian, see also Mohar~\cite[Section~6]{mohar:91}
for further improvements and references, as well as Colin de
Verdi\`ere~\cite[Theorem~3.1]{colin:98}.  A Cheeger inequality for the
standard Laplacian can be found in Chung's
book~\cite[Chapter~2]{chung:97}. An extension of the Cheeger constant to
magnetic potentials need the so-called \emph{frustration index}.  For
a more detailed overview on the literature concerning Cheeger
constants and the frustration index on graphs and manifolds we refer
to~\cite{llpp:15} and references therein.

We now show that the concept of \GAM-homomorphisms also gives simple
inequalities for Cheeger constants.  Here, we always denote the
underlying graphs of $\W$ and $\W'$ by $\G=(V,E,\bd)$ and
$\G'=(V',E',\bd')$.

We first define an ingredient necessary in the presence of a magnetic
potential:
\begin{definition}[Frustration index] 
  \label{def:frustration.index}
  Let $\W=\Wfull$ be an MW-graph and consider a function
  $\map \tau V R$, where $R$ is a subgroup of $\R/2\pi\Z$. We set
  \begin{equation*}
    \iota(\W,\tau)
    := \norm[{\lp[1]{E,\m}}]{d_\alpha(\e^{\im \tau})}
    = \sum_{e \in E}\m_e \abs{\e^{\im \tau(\bd_+e)}-\e^{-\im \alpha_e}\e^{\im \tau(\bd_-e)}}.
  \end{equation*}
  The \emph{frustration index} of $\W$ is defined as
  \begin{equation}
    \label{eq:frustration.index}
    \iota(\W) := \inf_{\tau \in R^V} \iota(\W,\tau),
  \end{equation}
  where $R^V$ denotes the set of all maps $\map \tau V R$.
\end{definition}
Note that the infimum is actually a minimum.  It is not hard to see
that $\iota(\W)=0$ if and only if $\alpha \sim 0$, i.e.\ if the
magnetic potential is cohomologous to $0$.
\AGAM-homomorphism gives a natural inequality for the frustration indices:
\begin{lemma}
  \label{lem:gam-frust}
  Let $\map \pi \W {\W'}$ be \aGAM-homomorphism and let
  $\map {\tau'}{V'}R$ a map, then
  \begin{equation*}
    \iota(\W,\tau'\circ \pi) \le \iota(\W',\tau')
    \qquadtext{and}
    \iota(\W) \le \iota(\W').
  \end{equation*}
\end{lemma}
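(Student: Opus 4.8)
The plan is to exploit the definition of the frustration index $\iota(\W,\tau)$ as an $\lp[1]{E,\m}$-norm and to push the computation through the MW-homomorphism $\pi$, just as was done for the $\ell_2$-norms in \Lem{j}. First I would write out $\iota(\W,\tau'\circ\pi)$ explicitly using \Def{frustration.index}: it equals $\sum_{e\in E}\m_e\,|\e^{\im(\tau'\circ\pi)(\bd_+e)}-\e^{-\im\alpha_e}\e^{\im(\tau'\circ\pi)(\bd_-e)}|$. Now I would use that $\pi$ is an MW-homomorphism, so $\pi(\bd_\pm e)=\bd'_\pm(\pi e)$ by \Defenum{hom.gam-graph}{hom.mow-graph.a} and $\alpha_e=\alpha'_{\pi e}$ by \Defenum{hom.gam-graph}{hom.mow-graph.b}. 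Substituting, the $e$-th summand becomes $\m_e\,|\e^{\im\tau'(\bd'_+(\pi e))}-\e^{-\im\alpha'_{\pi e}}\e^{\im\tau'(\bd'_-(\pi e))}|$, i.e.\ $\m_e$ times the quantity that appears in $\iota(\W',\tau')$ evaluated at the edge $e'=\pi e$.

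The next step is to sum over the fibres of $\pi$ on the edge set. Grouping the sum over $e\in E$ according to the value $e'=\pi(e)\in E'$ and using the edge weight inequality \Defenum{hom.gam-graph}{hom.mow-graph.d}, namely $(\pi_*\m)_{e'}=\sum_{e\colon\pi(e)=e'}\m_e\le\m'_{e'}$, I get
\begin{equation*}
  \iota(\W,\tau'\circ\pi)
  =\sum_{e'\in E'}\Bigl(\sum_{e\in E,\,\pi(e)=e'}\m_e\Bigr)
     \bigl|\e^{\im\tau'(\bd'_+e')}-\e^{-\im\alpha'_{e'}}\e^{\im\tau'(\bd'_-e')}\bigr|
  \le\sum_{e'\in E'}\m'_{e'}
     \bigl|\e^{\im\tau'(\bd'_+e')}-\e^{-\im\alpha'_{e'}}\e^{\im\tau'(\bd'_-e')}\bigr|
  =\iota(\W',\tau'),
\end{equation*}
where nonnegativity of each modulus term and of the weights $\m_e$ is what lets the weight inequality go through in the right direction. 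One small bookkeeping point: the sum in the $\lp[1]$-norm as written in \Def{frustration.index} runs over all of $E$ (both orientations), but since the summand is invariant under $e\mapsto\bar e$ — because $\alpha_{\bar e}=-\alpha_e$ and $\bd_\pm\bar e=\bd_\mp e$ — and $\pi$ respects the involution, the fibre regrouping is consistent on both sides and introduces at most a harmless factor of $2$ that cancels. This establishes the first inequality.

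For the second inequality, I take the infimum. Fix $\tau'\in R^{V'}$ achieving (or approaching) $\iota(\W')$; then $\tau:=\tau'\circ\pi$ is an element of $R^V$, so $\iota(\W)=\inf_{\tau\in R^V}\iota(\W,\tau)\le\iota(\W,\tau'\circ\pi)\le\iota(\W',\tau')$. Taking the infimum over $\tau'$ on the right gives $\iota(\W)\le\iota(\W')$. I do not expect any real obstacle here; the only things to be careful about are (i) correctly transporting all three homomorphism properties (graph structure, magnetic potential, edge weights) into the single formula, and (ii) the orientation/involution bookkeeping mentioned above, which is purely cosmetic. The argument is the $\lp[1]$-analogue of \Lemenum{j}{j.b} together with the intertwining \Lemenum{j}{j.c}, and indeed one could alternatively phrase it as: $d_\alpha(\e^{\im\tau'\circ\pi})=d_\alpha J^0(\e^{\im\tau'})=J^1 d_{\alpha'}(\e^{\im\tau'})$ by \Lemenum{j}{j.c}, and then apply the $\lp[1]$-contraction property of $J^1$.
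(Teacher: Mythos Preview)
Your proof is correct and follows essentially the same approach as the paper: both arguments rewrite the summand via the graph-homomorphism and magnetic-potential properties, regroup the edge sum by fibres of $\pi$, and then apply the edge-weight inequality $(\pi_*\m)_{e'}\le\m'_{e'}$; the second inequality follows in both cases by noting that $\{\tau'\circ\pi:\tau'\in R^{V'}\}\subset R^V$ and passing to the infimum. The paper's write-up is slightly more compact in that it records the key identity $d_\alpha(\e^{\im\tau'\circ\pi})=(d_{\alpha'}(\e^{\im\tau'}))\circ\pi$ up front (which is exactly the alternative phrasing you mention at the end), but the substance is the same.
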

\begin{proof}
  Note first that
  $d_\alpha(\e^{\im \tau' \circ \pi})=(d_{\alpha'}(\e^{\im
    \tau'}))\circ \pi$
  as $\pi$ is a graph homomorphism and $\alpha' \circ \pi=\alpha$.
  Moreover, we have
  \begin{align*}
    \iota(\W,\tau'\circ \pi)
    &= \sum_{e \in E} w_e \bigabs{(d_\alpha(\e^{\im \tau'\circ \pi}))_e}
    = \sum_{e' \in E'} \sum_{e \in E, \pi(e)=e'} w_e 
         \bigabs{(d_{\alpha'}(\e^{\im \tau'}))_{e'}}
    = \sum_{e' \in E'} (\pi_*\m)_{e'} \bigabs{(d_{\alpha'}(\e^{\im \tau'}))_{e'}}\\
    &\le \sum_{e' \in E'} \m'_{e'} \bigabs{d_{\alpha'}(\e^{\im \tau'})}
      = \iota(\W',\tau'),
  \end{align*}
  as $\pi$ is \aGAM-homomorphism (and in particular,
  $(\pi_*w)_{e'} \le \m'_{e'}$ for all $e' \in E'$).  For the last
  inequality in the lemma, note that the set $R^V$ of maps $\map \tau V R$ is
  larger than the subset
  $\set{\tau' \circ \pi}{\tau' \in R^{V'}} \subset R^V$, hence we have
  \begin{equation*}
    \iota(\W)
    \le \inf_{\tau' \in R^{V'}} \iota(\W,\tau'\circ \pi)
    \le \inf_{\tau' \in R^{V'}} \iota(\W',\tau')
    = \iota(\W').
    \qedhere
  \end{equation*}
\end{proof}

We denote by $\W[V_0]$ the induced subgraph $\G[V_0]$ with vertex set
$V_0 \subset V$ and edge set $E(V_0)$ (see \Def{induced-subgraph})
together with the natural restrictions of $\m$ and $\alpha$ to $V_0$
respectively $E(V_0)$.  A \emph{$k$-subpartition} of $V$ is given by
$k$ pairwise disjoint non-empty subsets $V_1,\dots,V_k$ of $V$; the set
of all $k$-subpartitions $\Pi=\{V_1,\dots,V_k\}$ of $V$ is denoted by
$\Pi_k(V)$.
\begin{definition}
  \label{def:cheeger}
  Let $\W$ be an MW-graph.  For a subset $V_0 \subset V$ we set
  \begin{equation*}
    h(\W,V_0)
    := \frac{\iota(\W[V_0])+w(E(V_0, \compl{V_0}))}{w(V_0)}.
  \end{equation*}
  The \emph{$k$-th} (also called \emph{$k$-way)} \emph{(magnetic
    weighted) Cheeger constant} $h_k(\W)$ is defined as
  \begin{equation}
    \label{eq:cheeger-n}
    h_k(\W)
    :=\inf_{\Pi \in \Pi_k(V)}  \sup_{ V_0 \in \Pi} h(\W,V_0).
  \end{equation}
\end{definition}
Note that the infimum and supremum are actually minimum and maximum.
It is not hard to see that $h_k(\W) \le h_{k+1}(\W)$.  Moreover, for
$k=1$ resp.\ $k=2$ we have
\begin{equation*}
  h_1(\W)
  =\min_{V_0 \subset V, V_0 \ne \emptyset} h(\W,V_0)
  \quadtext{resp.}
  h_2(\W)
  =\min_{V_0 \subset V, V_0 \ne \emptyset, V_0 \ne V} 
  \max \{h(\W,V_0), h(\W, \compl V_0)\}
\end{equation*}
for the first and second Cheeger constant (the latter is usually
called \emph{the} Cheeger constant).  If $\alpha \sim 0$ then the
second (usual) Cheeger constant equals
\begin{equation*}
  h_2(\W)
  =\min_{V_0 \subset V, V_0 \ne \emptyset, V_0 \ne V} 
  \frac{\m(E(V_0, \compl V_0))}{\min \{\m(V_0),\m(\compl V_0)\}}.
\end{equation*}

\begin{lemma}
  \label{lem:gam-cheeger}
  Let $\map \pi \W {\W'}$ be \aGAM-homomorphism, then
  \begin{equation*}
    h(\W,\pi^{-1}(V_0'))
    \le h(\W',V_0')
  \end{equation*}
  for all $V_0' \subset V'$, $V_0' \ne \emptyset$.
\end{lemma}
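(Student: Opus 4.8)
The plan is to put $V_0:=\pi^{-1}(V_0')$, which is non-empty since $\pi$ is surjective on vertices by \Prpenum{inj-sur}{mow-hom.a}, and to compare the three ingredients of $h(\W,V_0)$ with those of $h(\W',V_0')$ one at a time. By \Def{cheeger}, the numerator of $h(\cdot,\cdot)$ is non-negative and its denominator strictly positive, so it suffices to establish
\[
  \iota(\W[V_0])\le\iota(\W'[V_0']),\qquad
  \m\bigl(E(V_0,\compl{V_0})\bigr)\le\m'\bigl(E'(V_0',\compl{V_0'})\bigr),\qquad
  \m(V_0)\ge\m'(V_0')>0,
\]
and then to divide the (added) first two quantities by the third, which reverses into $h(\W,\pi^{-1}(V_0'))\le h(\W',V_0')$.

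For the denominator I would use that $V_0=\pi^{-1}(V_0')$ is the disjoint union of the fibres $\pi^{-1}(v')$ over $v'\in V_0'$, whence $\m(V_0)=\sum_{v'\in V_0'}(\pi_*\m)(v')$, and the vertex weight inequality \Defenum{hom.gam-graph}{hom.mow-graph.c} gives $\m(V_0)\ge\sum_{v'\in V_0'}\m'(v')=\m'(V_0')$, which is $>0$ because $V_0'\ne\emptyset$. For the boundary term I would first note that preimages commute with complements, so $\compl{V_0}=\pi^{-1}(\compl{V_0'})$, and then invoke \Lem{graph-homo-edgeset} to obtain $E^\G(V_0,\compl{V_0})=\pi^{-1}\bigl(E^{\G'}(V_0',\compl{V_0'})\bigr)$; grouping the sum defining $\m(E(V_0,\compl{V_0}))$ according to the value $e'=\pi(e)$ turns it into $\sum_{e'}(\pi_*\m)_{e'}$, and the edge weight inequality \Defenum{hom.gam-graph}{hom.mow-graph.d} finishes this case.

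The frustration-index step is the one requiring care, and I expect it to be the main (if mild) obstacle: the point is that the restriction $\map\pi{\W[V_0]}{\W'[V_0']}$ is again \aGAM-homomorphism, after which \Lem{gam-frust} applies verbatim. That $\pi$ restricts to a graph homomorphism $\G[V_0]\to\G'[V_0']$ follows from $V_0=\pi^{-1}(V_0')$ together with~\Eq{eq:graph-homo-prop} (an edge of $E^\G(V_0)$ has both endpoints in $\pi^{-1}(V_0')$, hence its image has both endpoints in $V_0'$), while magnetic invariance is inherited. For the two weight inequalities of \Def{hom.gam-graph} one checks that for $v'\in V_0'$ the set $\{v\in V_0:\pi(v)=v'\}$ equals $\pi^{-1}(v')$ and for $e'\in E^{\G'}(V_0')$ the set $\{e\in E^\G(V_0):\pi(e)=e'\}$ equals $\{e\in E:\pi(e)=e'\}$ — in both cases the whole fibre automatically lies over $V_0$ — so conditions \itemref{hom.mow-graph.c} and \itemref{hom.mow-graph.d} for the restriction coincide with those for $\pi$ itself. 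Hence \Lem{gam-frust} gives $\iota(\W[V_0])\le\iota(\W'[V_0'])$, and combining the three inequalities yields
\[
  h(\W,\pi^{-1}(V_0'))
  =\frac{\iota(\W[V_0])+\m(E(V_0,\compl{V_0}))}{\m(V_0)}
  \le\frac{\iota(\W'[V_0'])+\m'(E'(V_0',\compl{V_0'}))}{\m'(V_0')}
  =h(\W',V_0'),
\]
which is the assertion; everything besides recognising the induced-subgraph restriction as \aGAM-homomorphism is routine bookkeeping with preimages.
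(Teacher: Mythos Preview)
Your proof is correct and follows essentially the same route as the paper's: you compare the three ingredients of $h(\cdot,\cdot)$ separately using \Lem{graph-homo-edgeset}, the push-forward weight inequalities, and \Lem{gam-frust} applied to the restricted \GAM-homomorphism $\map\pi{\W[V_0]}{\W'[V_0']}$. Your justification of why the restriction remains \aGAM-homomorphism (that the fibres over $V_0'$ and over $E^{\G'}(V_0')$ automatically lie in $V_0$ resp.\ $E^\G(V_0)$) is in fact more explicit than what the paper spells out.
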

\begin{proof}
  If $\map \pi \W {\W'}$ is \aGAM-homomorphism, then the restriction
  $\map \pi {\W[\pi^{-1}(V_0')]}{\W'[V_0']}$ is defined as map (as
  $\pi(\pi^{-1}(V_0'))\subset V_0'$ and
  $\pi(E^\G(\pi^{-1}(V_0')))\subset E^{\G'}(V_0')$ by
  \Lem{graph-homo-edgeset}), and again \aGAM-homomorphism.  In
  particular, from \Lem{gam-frust} we conclude that
  \begin{equation*}
    \iota(\W[\pi^{-1}(V_0')]) \le \iota(\W'[V_0']).
  \end{equation*}
  Next, we have
  $E^\G(\pi^{-1}(V_0'),\compl{(\pi^{-1}(V_0'))})
  =\pi^{-1}(E^{\G'}(V_0',\compl{(V_0')}))$
  again by \Lem{graph-homo-edgeset} and as
  $\compl{(\pi^{-1}(V_0'))}=\pi^{-1}(\compl{(V_0')})$.  In particular,
  we conclude
  \begin{equation*}
    \m\bigl(E^\G(\pi^{-1}(V_0'),\compl{(\pi^{-1}(V_0'))})\bigr)
    =(\pi_*\m)\bigl(E^{\G'}(V_0',\compl{(V_0')})\bigr)
    \le \m'\bigl(E^{\G'}(V_0',\compl{(V_0')})\bigr)
  \end{equation*}
  as $\pi$ is \aGAM-homomorphism.  Similarly, we have
  \begin{equation*}
    \m\bigl(\pi^{-1}(V_0')\bigr)
    = (\pi_* \m)(V_0')
    \ge \m'(V_0'),
  \end{equation*}
  and the desired inequality follows.
\end{proof}

We are now able to prove the main result of this section:
\begin{theorem}
  \label{thm:cheeger-and-morphisms}
  Let $\W$ and $\W'$ be two \GAM-graphs such that $\W \lesse \W'$,
  i.e.\ there is \aGAM-homomorphism $\map \pi {\W} {\W'}$ (see
  \Def{hom.gam-graph}), then we have
  \begin{equation*}
    h_k(\W) \le h_k(\W')
  \end{equation*}
  for all $k \in \N$.
\end{theorem}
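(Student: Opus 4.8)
The plan is to reduce the statement for the $k$-th Cheeger constants to the single-set inequality already proved in \Lem{gam-cheeger}, by pulling back subpartitions of $V'$ along $\pi$. Throughout, $\pi\colon\W\to\W'$ denotes the given \GAM-homomorphism, and $\G=(V,E,\bd)$, $\G'=(V',E',\bd')$ the underlying graphs.

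First I would fix an arbitrary $k$-subpartition $\Pi'=\{V_1',\dots,V_k'\}\in\Pi_k(V')$ and consider the preimages $\pi^{-1}(V_1'),\dots,\pi^{-1}(V_k')\subset V$. Since $V_1',\dots,V_k'$ are pairwise disjoint, so are their preimages; and since $\pi$ is surjective on the vertex set by \Prpenum{inj-sur}{mow-hom.a}, each $\pi^{-1}(V_i')$ is non-empty. Hence $\Pi:=\{\pi^{-1}(V_1'),\dots,\pi^{-1}(V_k')\}$ is again a genuine $k$-subpartition of $V$, i.e.\ $\Pi\in\Pi_k(V)$.

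Next I would apply \Lem{gam-cheeger} to each block, obtaining $h(\W,\pi^{-1}(V_i'))\le h(\W',V_i')$ for $i=1,\dots,k$, and take the maximum over $i$:
\begin{equation*}
  \sup_{V_0\in\Pi} h(\W,V_0)
  = \max_{1\le i\le k} h(\W,\pi^{-1}(V_i'))
  \le \max_{1\le i\le k} h(\W',V_i')
  = \sup_{V_0'\in\Pi'} h(\W',V_0').
\end{equation*}
Since $\Pi\in\Pi_k(V)$, the left-hand side is bounded below by $h_k(\W)=\inf_{\Pi\in\Pi_k(V)}\sup_{V_0\in\Pi}h(\W,V_0)$, so $h_k(\W)\le\sup_{V_0'\in\Pi'}h(\W',V_0')$. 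Finally, taking the infimum over all $\Pi'\in\Pi_k(V')$ yields $h_k(\W)\le h_k(\W')$, as desired.

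I do not expect a genuine obstacle here: essentially all the work is in \Lem{gam-cheeger}, and the only point needing a little care is that $\pi^{-1}$ preserves both disjointness and (via surjectivity of $\pi$ on vertices) non-emptiness, so that the pull-back of a $k$-subpartition is again a $k$-subpartition with the \emph{same} number $k$ of blocks — which is exactly what makes the comparison of $h_k$'s work. The monotonicity $h_k(\W)\le h_{k+1}(\W)$ and the fact that the infima/suprema are attained (noted after \Def{cheeger}) are not needed for this argument.
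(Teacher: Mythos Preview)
Your proof is correct and follows essentially the same approach as the paper: pull back a $k$-subpartition of $V'$ via $\pi$ (using surjectivity on vertices to ensure non-emptiness of the blocks), apply \Lem{gam-cheeger} blockwise, and compare the sup-inf. The only cosmetic difference is that the paper picks the optimal $\Pi'$ at the outset (using that the infimum is attained), whereas you work with an arbitrary $\Pi'$ and take the infimum at the end.
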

\begin{proof}
  Let the minimum in $h_k(\W')$ be achieved at
  $\Pi'=\{V_1',\dots,V_k'\} \in \Pi_k(V')$, and the maximum at $V_1'$,
  i.e.\ assume that $h_k(\W')=h(\W',V_1')$.  As $\pi$ is surjective on
  the vertices (see \Prpenum{inj-sur}{mow-hom.a}),
  $\Pi:=\{\pi^{-1}(V_1'),\dots, \pi^{-1}(V_k ')\}$ is again a
  $k$-subpartition (all sets are pairwise disjoint \emph{and}
  non-empty by the surjectivity).  Now we have
  \begin{equation*}
    h_k(\W)
    \le \sup_{j=1,\dots,k} h(\W,\pi^{-1}(V_j'))
    \le \sup_{j=1,\dots,k} h(\W',(V_j'))
    = h(\W',V_1')
    = h_k(\W')
  \end{equation*}
  as $h_k(\G)$ is the infimum over all $k$-subpartitions, and $\Pi$ is
  such a $k$-partition of $V$ (first inequality).  The second
  inequality follows from \Lem{gam-cheeger}, and the last equality
  from the choice of the partition $\Pi'$ and $V_1'$.
\end{proof}

\begin{remark}
  \label{rem:ev-est-and-cheeger}
  Note that we have proven in \Thm{homomorphism} the inequality
  $\lambda_k(\W) \le \lambda_k(\W')$ if there is \aGAM-homomorphism
  $\map \pi \W {\W'}$.  We have just proven in
  \Thm{cheeger-and-morphisms} that \aGAM-homomorphism also increases
  the $k$-th Cheeger constant, hence \Thm{cheeger-and-morphisms} is in
  accordance with the (magnetic weighted) Cheeger inequalities
  \begin{equation*}
    \frac 12 \lambda_k(\W) 
    \le h_k(\W)
    \le Ck^3 \sqrt{\rho_\infty \lambda_k(\W)}
  \end{equation*}
  for all $k \in \{1,\dots,\card{\G}\}$ proven in~\cite{llpp:15},
  where $C>0$ is a universal constant (recall that $\rho_\infty$ is
  the supremum of the relative weight, see~\eqref{eq:rho.bdd}).  If
  $k=1$, then $C=1$, and if $k=2$ and if $\alpha \sim 0$, then one can
  choose $C=\sqrt 2/4$.
\end{remark}

\begin{example}
  \label{ex:cheeger}
  Let $\W=\Wfull$ and $\W'=(\G',\m',\alpha')$ be two \GAM-graphs.
  \begin{itemize}
  \item \myparagraph{Combinatorial weight and removing edges:} If
    $E_0 \subset E(\G)$ and $\W$, $\W'=\W-E_0 \in \Co$, then
    $h_k(\W-E_0)\le h(\W)$.  Heuristically, this means that removing
    edges decreases the connectivity.
  \item \myparagraph{Standard and combinatorial weight compared:} If
    $\G=\G'$, $\m=\deg$ and $\m'=\com$, then $h_k(\W)\leq h_k(\W')$
    (the combinatorial weight has higher Cheeger constants).
  \item \myparagraph{Standard weight and contracting vertices:} If
    $\sim$ is an equivalence relation on $V(\G)$, and if $\W$,
    $\W'=\W/{\sim} \in \De$, then $h_k(\W) \le h_k(\W/{\sim})$.
    Heuristically, this means that contracting vertices increases the
    connectivity.
  \end{itemize}

\end{example}

\subsection{Covering graphs and spectral gaps}
\label{subsec:covering} 
In~\cite{flp:18} and~\cite{lledo-post:08} we study the spectrum of the
discrete Laplacian of infinite (regular) covering graphs
${\wt \W}\rightarrow{\W}$ with finite quotient. For the case with a
periodic vector potential see~\cite{fabila-lledo:pre19}. In Section~5
of~\cite{flp:18} we consider only Abelian covering and interpret the
magnetic potential as a Floquet parameter to decompose the Laplacian
$\Delta^{\wt \W}$ as a direct integral of discrete magnetic weighted
Laplacians $\Delta_{\alpha}^{\W}$ on the finite quotient. We developed
a technique of virtualising specific edges and vertices on the
quotient $\W$ to obtain two new graphs $\W^-,\W^+$ satisfying the
relation
\begin{equation}
  \W^-\less\W\less \W^+.
\end{equation}
Let $\W=\Wfull$ be an \GAM-graph, virtualising a set of edges
$E_0\subset E$ means in this context to consider a new graph with
vertices $V=V^+$ and edges $E^-=E\setminus E_0$ but, contrary to the
case described in \Subsec{delete-edge}, we keep the value of weight on
the remaining edges, i.e.\ we define $\m^-:=\m\restr{E\setminus E_0}$
and $\alpha_e^-=\alpha_e$ for all $e\in E^-$. In particular, this
operation may change the type of weights used. For example, if $\m$ is
a standard weight on $\G$, then $\m' \restr{V'}$ need not to be
standard for the new graph (cf.\ Definition~3.4 in~\cite{flp:18}).
The virtualisation of vertices $V_0\subset V$ gives a new partial
\GAM-graph $\W^+=(G^+,\m^+,\alpha^+)$ defined as
$V^+=\G\setminus V_0$, $E^+=E\setminus E(V_0)$, $\m_v^+=\m_v$ for all
$v\in V^+$, $\m_e^+=\m_e$ and $\alpha_{e}^+=\alpha_{e}$ for all
$e\in E^+$ (see Definition~3.9 in~\cite{flp:18} for details and
additional motivation).
 
Denote the spectrum of $\W^{\pm}$ by $\{\lambda_k(\W^\pm)\}_k$ (written
as usual in ascending order and counting multiplicities).  Our
techniques allow to localise the spectrum of the covering graph by
 \begin{equation}
 \label{eq:loc.spect}
 \sigma(\Delta^{\wt \W}) 
 \subset  
 \bigcup_{k=1}^{\card{\G}} 
 \bigl[\lambda_{k}(\W^-),\lambda_{k}(\W^+)\bigr]
 \end{equation}
 where we denote by
 $J_k:= \bigl[\lambda_{k}(\W^-),\lambda_{k}(\W^+)\bigr]$ the
 bracketing intervals.  The elementary operations described in
 \Sec{geo} can now be applied to refine the spectral localisation
 given in \Eq{eq:loc.spect} and, in particular, one can discover new
 spectral gaps that can not be found with the method described
 in~\cite{flp:18,fabila-lledo:pre19}.
 
 We illustrate this in one specific example of covering graph but,
 there are many ways how to refine the spectral localisation using the
 methods described in \Sec{geo}. Consider the infinite $\Z$-covering
 graph $\wt \W$ given in Figure~\ref{subfig:brack3} with standard
 weights.
 
 \begin{figure}[h]
 	\centering
 	\subfloat[{}\label{subfig:brack1}]
 	{  \qquad 
 		\begin{tikzpicture}[auto, 
 		vertex/.style={circle,draw=black!100,fill=black!100, thick,
 			inner sep=0pt,minimum size=1mm}]
 		\node (A) at (1/2,-2/2) [vertex,label=below:$v_6$] {};
 		\node (B) at (-1/2,-2/2) [vertex,label=below:$v_0$] {};
 		\node (C) at (-2/2,0) [vertex,label=left:$v_1$] {};
 		\node (D) at (-1/2,2/2) [vertex,label=above:$v_2$] {};
 		\node (E) at (1/2,2/2) [vertex,label=above:$v_4$] {};
 		\node (F) at (2/2,0) [vertex,label=right:$v_5$] {};
 		\node (G) at (-2,1) [vertex,label=below:$v_3$] {};
 		
 		\path [-latex](C) edge node[right] {$e_1$} (D);
 		\path [-latex](D) edge node[below] {} (E);
 		\path [-latex](E) edge node[left] {} (F);
 		\path [-latex](F) edge node[left] {} (A);
 		\path [-latex](A) edge node[above] {} (B);
 		\path [-latex](D) edge node[above] {} (G);
 		\path [-latex](B) edge node[above] {} (C);
 		\end{tikzpicture}\qquad}  
 	\color{black}\subfloat[{}\label{subfig:brack2}]
 	{  \qquad \begin{tikzpicture}[auto, 
 		vertex/.style={circle,draw=black!100,fill=black!100, thick,
 			inner sep=0pt,minimum size=1mm}]
 		\node (A) at (1/2,-2/2) [vertex,label=below:$v_6$] {};
 		\node (B) at (-1/2,-2/2) [vertex,label=below:$v_0$] {};
 		\node (C) at (-2/2,0) [vertex,label=left:$v_1$] {};
 		\node (D) at (-1/2,2/2) [vertex,label=above:$v_2$] {};
 		\node (E) at (1/2,2/2) [vertex,label=above:$v_4$] {};
 		\node (F) at (2/2,0) [vertex,label=right:$v_5$] {};
 		\node (G) at (-2,1) [vertex,label=below:$v_3$] {};
 		\node (H) at (-2,-1) [vertex,label=below:$v_8$] {};
 		
 		\path [-latex](C) edge node[right] {$e_1$} (D);
 		\path [-latex](D) edge node[below] {} (E);
 		\path [-latex](E) edge node[left] {} (F);
 		\path [-latex](F) edge node[left] {} (A);
 		\path [-latex](A) edge node[above] {} (B);
 		\path [-latex](D) edge node[above] {} (G);
 		\path [-latex](B) edge node[above] {} (H);
 		\end{tikzpicture}\qquad}
 	
 	\subfloat[{The infinite tree $\widetilde{\G}$ is the universal cover of $\G$.}\label{subfig:brack3}]{  \qquad  \begin{tikzpicture}[auto,
 		vertex/.style={circle,draw=black!100,fill=black!100, thick,
 			inner sep=0pt,minimum size=1mm},scale=4]
 		\node (C) at (-.95,0) [vertex,inner sep=.25pt,minimum size=.25pt,label=above:]{};
 		\node (C) at (-.85,0) [vertex,inner sep=.25pt,minimum size=.25pt,label=above:]{};
 		\node (C) at (-.75,0) [vertex,inner sep=.25pt,minimum size=.25pt,label=above:]{};                              
 		\node (X1) at (-.6,0) [vertex,,label=below:$v'_0$]{};
 		\node (X2) at (-.4,0) [vertex,label=below:$v'_1$]{};
 		\node (X3) at (-.2,0) [vertex,,label=below:$v'_2$]{};
 		\draw[-latex] (X1) to[] node[above] {} (X2);  
 		\draw[-latex] (X2) to[] node[above] {} (X3);   
 		
 		\node (X4) at (0,0) [vertex,label=below:$v'_4$]{};
 		\node (X5) at (.2,0) [vertex,label=below:$v'_5$]{};
 		\node (X6) at (.4,0) [vertex,label=below:$v'_6$]{};   
 		\node (X7) at (.6,0) [vertex,label=below:$v'_7$]{};
 		\node (X8) at (.8,0) [vertex,label=below:$v'_8$]{};
 		\node (X9) at (1,0) [vertex,label=below:$v'_9$]{};
 		\node (X10) at (1.2,0) [vertex,label=below:$v'_{11}$]{};
 		\node (X11) at (1.4,0) [vertex,label=below:$v'_{12}$]{};
 		\node (X12) at (1.6,0) [vertex,label=below:$v'_{13}$]{};   
 		\node (X13) at (1.8,0) [vertex,label=below:$v'_{14}$]{};
 		\node (X14) at (2,0) [vertex,label=below:$v'_{15}$]{};
 		\node (X15) at (2.2,0) [vertex,label=below:$v'_{16}$]{}; 
 		\node (X16) at (2.4,0) [vertex,label=below:$v'_{18}$]{};
 		\node (X17) at (2.6,0) [vertex,label=below:$v'_{19}$]{};                                 	                
 		\draw[-latex] (X3) to[] node[above] {} (X4);  
 		\draw[-latex] (X4) to[] node[above] {} (X5); 
 		\draw[-latex] (X5) to[] node[above] {} (X6); 
 		\draw[-latex] (X6) to[] node[above] {} (X7);  
 		\draw[-latex] (X7) to[] node[above] {} (X8); 
 		\draw[-latex] (X8) to[] node[above] {} (X9);
 		\draw[-latex] (X9) to[] node[above] {} (X10);  
 		\draw[-latex] (X10) to[] node[above] {} (X11); 
 		\draw[-latex] (X11) to[] node[above] {} (X12);
 		\draw[-latex] (X12) to[] node[above] {} (X13); 
 		\draw[-latex] (X13) to[] node[above] {} (X14);
 		\draw[-latex] (X14) to[] node[above] {} (X15); 
 		\draw[-latex] (X16) to[] node[above] {} (X17);
 		\draw[-latex] (X15) to[] node[above] {} (X16);                             
 		\node (Y1) at (-.2,.5) [vertex,label=above:$v'_3$]{};    
 		\draw[-latex] (X3) to[] node[above] {} (Y1); 
 		\node (Y2) at (1,.5) [vertex,label=above:$v'_{10}$]{};    
 		\draw[-latex] (X9) to[] node[above] {} (Y2);
 		\node (Y3) at (2.2,.5) [vertex,label=above:$v'_{17}$]{};    
 		\draw[-latex] (X15) to[] node[above] {} (Y3);  
 		
 		\node (C) at (2.75,0) [vertex,inner sep=.25pt,minimum size=.25pt,label=above:]{};
 		\node (C) at (2.85,0) [vertex,inner sep=.25pt,minimum size=.25pt,label=above:]{};
 		\node (C) at (2.95,0) [vertex,inner sep=.25pt,minimum size=.25pt,label=above:]{};                               
 		\end{tikzpicture}\qquad}                
 	
 	\caption{\itemref{subfig:brack1} The graph
 		$G$. \itemref{subfig:brack2} The graph
 		$G'\simeq G/\{v_1,v_8\}$.  \itemref{subfig:brack3} The
 		infinite graph $\tilde{G}$, is an infinite tree and maximal
 		Abelian covering of $\G$.}
 	\label{fig:brack}
\end{figure}

It is shown in Example~6.1 of~\cite{flp:18} that
\begin{equation*}
  \sigma(\Delta^{\wt \W}) 
  \subset  \bigcup_{k=1}^{7} 
  J_i\quad ,
\end{equation*}  
where
\begin{align*}
  J_1\approx 
  \left[0,0.121\right],
  \quad 
  J_2\approx 
  \left[0.116,0.358\right],
  \quad
  J_3\approx 
  \left[0.5,0.744\right],
  \quad
  J_4\approx  
  \left[0.713,1.256\right],\\
  J_5\approx 
  \left[1.145,1.642\right],
  \quad
  J_6\approx 
  \left[1.638,1.879\right],
  \quadtext{and}   
  J_7\approx  
  \left[1.889,2\right].                     
\end{align*} 
using also the symmetry of the spectrum from bipartiteness.  Note that
$J_2\cap J_3=J_6\cap J_7=\emptyset$, so that we have localised two
spectral gaps in $\sigma(\Delta^{\wt \W}) $.  We can now refine the
localisation of the spectrum as follows. Consider the graphs $\W$ and
$\W'$ as in Figures~\ref{subfig:brack1} and~\ref{subfig:brack2} where
$\W'=\W/\{v_1, v_8\}$.  Applying \Thm{vert-contra} we obtain the
following spectral relations
\begin{equation*}
  \W'_0=\W'_t\less\W_t\less[1]\W'_t=\W'_0.
\end{equation*}
where for $\W'_t$ we can take $t=0$ since $\G'$ is a tree. Therefore
we have the alternative localisation:
\begin{equation*}
  \sigma(\Delta^{\wt \W}) 
  \subset  \bigcup_{k=1}^{7} 
  J'_i\quad ,
\end{equation*}  
where
\begin{align*}
  J'_1\approx 
  \left[0,0.108\right],
  \quad 
  J'_2\approx 
  \left[0.108,0.463\right],
  \quad
  J'_3\approx 
  \left[0.463,1\right],
  \quad
  J'_4=\{1\},\\
  J'_5\approx 
  \left[1,1.536\right],
  \quad
  J'_6\approx 
  \left[1.536,1.891\right],
  \quadtext{and}
  J'_7\approx  
  \left[1.891,2\right].                     
\end{align*} 
Intersection both localisations $J$ and $J'$ we obtain a finer
bracketing:
\begin{equation*}
  \sigma(\Delta^{\wt \W}) 
  \subset  \bigcup_{k=1}^{7} 
  J\cap J'_i=\bigcup_{k=1}^{7} J'',
\end{equation*}  
where
\begin{align*}
  J''_1\approx 
  \left[0,0.108\right],
  \quad 
  J''_2\approx 
  \left[0.116,0.358\right],
  \quad
  J''_3\approx 
  \left[0.5,0.744\right],
  \quad
  J''_4=\{1\},\\
  J''_5\approx 
  \left[1.145,1.536\right],
  \quad
  J''_6\approx 
  \left[1.638,1.879\right],
  \quadtext{and}
  J''_7\approx  
  \left[1.891,2\right].                     
\end{align*} 
Note that $J''_k$ are better than $J_k$ for $k\in \{1,4,5,7\}$.  Using
this refinement obtained by applying a vertex splitting we are able to
determine that $1\in\sigma({\wt\W})$ with spectral gaps around it. We
also found new spectral gaps between \emph{all} bands, while
in~\cite{flp:18}, we only found two bands.


\newcommand{\etalchar}[1]{$^{#1}$}
\providecommand{\bysame}{\leavevmode\hbox to3em{\hrulefill}\thinspace}
\providecommand{\MR}{\relax\ifhmode\unskip\space\fi MR }
\providecommand{\MRhref}[2]{%
  \href{http://www.ams.org/mathscinet-getitem?mr=#1}{#2}
}
\providecommand{\href}[2]{#2}

\end{document}